\documentclass[12pt]{article}

\topmargin  = 0 in \oddsidemargin = 0.25 in
\setlength{\textheight}{8.6 in} \setlength{\textwidth}{6 in}
\setlength{\unitlength}{1.0 mm}

\linespread{1}  

\usepackage{amsmath}
\usepackage{amsfonts}
\usepackage{color} 
\usepackage{amssymb}
\usepackage{graphicx}
\usepackage{hyperref}
\usepackage{enumerate}
\usepackage[all]{xy}
\usepackage{arydshln}

\def\tr{{\rm tr}}

 \allowdisplaybreaks

\begin{document}

\newtheorem{problem}{Problem}

\newtheorem{theorem}{Theorem}[section]
\newtheorem{corollary}[theorem]{Corollary}
\newtheorem{definition}[theorem]{Definition}
\newtheorem{conjecture}[theorem]{Conjecture}
\newtheorem{question}[theorem]{Question}
\newtheorem{lemma}[theorem]{Lemma}
\newtheorem{proposition}[theorem]{Proposition}
\newtheorem{quest}[theorem]{Question}
\newtheorem{example}[theorem]{Example}

\newenvironment{proof}{\noindent {\bf
Proof.}}{\rule{2mm}{2mm}\par\medskip}

\newenvironment{proofof3}{\noindent {\bf
Proof of  Theorem 1.2.}}{\rule{2mm}{2mm}\par\medskip}

\newenvironment{proofof5}{\noindent {\bf
Proof of  Theorem 1.3.}}{\rule{2mm}{2mm}\par\medskip}

\newcommand{\remark}{\medskip\par\noindent {\bf Remark.~~}}
\newcommand{\pp}{{\it p.}}
\newcommand{\de}{\em}

\title{{Extensions of some matrix inequalities related to trace and partial traces}\thanks{This paper was firstly announced in March, 2020, and was later published on Linear Algebra and its Applications 639 (2022) 205--224. 
See \url{https://doi.org/10.1016/j.laa.2022.01.006}. E-mail addresses: ytli0921@hnu.edu.cn (Y\v{o}ngt\={a}o L\v{i}).}
 }

\author{Yongtao Li$^*$\\
{\small School of Mathematics, Hunan University} \\
{\small Changsha, Hunan, 410082, P.R. China }  }

\maketitle

 \begin{center}
Dedicated to Prof. Weijun Liu  on his 60th birthday
 \end{center}

\begin{abstract}
We first present a determinant inequality related to partial traces  
for positive semidefinite block matrices. 
Our result extends  
a  result of Lin [Czech. Math. J. 66 (2016)] 
 and improves 
 a result of Kuai [Linear Multilinear Algebra 66 (2018)].  
Moreover, we provide a unified treatment of 
a result of Ando [ILAS Conference (2014)] and a recent 
result of Li, Liu and Huang [Operators and Matrices 15 (2021)]. 
Furthermore, we also extend some determinant inequalities involving partial traces 
to a larger class of matrices 
whose numerical ranges are contained in a sector. 
In addition, some extensions on trace inequalities for positive semidefinite $2\times 2$ block matrices are also included. 
 \end{abstract}

{{\bf Key words:}  
Partial traces; 
Trace inequalities; 
Fiedler and Markham; 
Numerical range in a sector;   } 

{{\bf 2010 Mathematics Subject Classification.}  15A45, 15A60, 47B65.}

\section{Introduction}

\label{sec1} 

Throughout the paper, we use the following standard notation. 
The set of $n\times n$ complex matrices is denoted by $\mathbb{M}_n(\mathbb{C})$, 
or simply by $\mathbb{M}_n$, 
and the identity matrix of order $n$ by  $I_n$, or $I$ for short. 
We write $\lambda_i (A)$ and $\sigma_i(A)$ 
for the $i$-th largest eigenvalue and 
singular value of $A$, respectively.   
By convention, if $A\in \mathbb{M}_n$ is positive semidefinite, 
we write $A\ge 0$. For  Hermitian matrices $A$ and $B$ with the same size, 
$A\ge B$ means that  $A-B$ is positive semidefinite, i.e., $A-B\ge 0$.  
If $A=[a_{i,j}] $ is of order $m\times n$ 
and $B$ is of order $s\times t$, the tensor product of $A$ with $B$, 
denoted by $A\otimes B$, is an $ms\times nt$ matrix that  
partitioned into $m\times n$ block matrices with the $(i,j)$-block being the $s\times t$ matrix $a_{i,j}B$.
In this paper, 
we are interested in complex block matrices. Let $\mathbb{M}_n(\mathbb{M}_k)$ 
be the set of complex matrices partitioned into $n\times n$ blocks 
with each block being $k\times k$. 
The element of $\mathbb{M}_n(\mathbb{M}_k)$ is usually written as ${ H}=[H_{i,j}]_{i,j=1}^n$, 
where $H_{i,j}\in \mathbb{M}_k$ for all $i,j$.

Now we introduce the definition of partial traces, 
which comes from Quantum Information Theory \cite[p. 12]{Petz08}.  
For $H\in \mathbb{M}_n(\mathbb{M}_k)$, 
the first partial trace (map) $H \mapsto \mathrm{tr}_1 H \in \mathbb{M}_k$ is defined as the  
adjoint map of the embedding map $X \mapsto I_n\otimes X\in \mathbb{M}_n\otimes \mathbb{M}_k$. 
Correspondingly, the second partial trace (map)  $H \mapsto \mathrm{tr}_2 H\in \mathbb{M}_n$ is 
defined as the adjoint map of the embedding map 
$Y\mapsto Y\otimes I_k \in \mathbb{M}_n\otimes \mathbb{M}_k$. Therefore, we have
\begin{equation} \label{eqdef} 
\langle I_n\otimes X, H \rangle =\langle X, \mathrm{tr}_1H \rangle ,
\quad \forall X\in \mathbb{M}_k, 
\end{equation}
and 
\[ \langle Y\otimes I_k, H \rangle =\langle Y,\mathrm{tr}_2 H \rangle, 
\quad \forall Y\in \mathbb{M}_n, \]
where $\langle \cdot ,\cdot \rangle$ stands for the Hilbert-Schmidt inner product, i.e., 
$\langle A,B \rangle =\tr (A^*B)$. 
The above definition of partial traces is implicit. 
Assume that $H=[H_{i,j}]_{i,j=1}^n$ is an $n\times n$ block 
matrix with $H_{i,j}\in \mathbb{M}_k$, 
the visualized version of the  partial traces 
is equivalently given in  \cite[pp. 120--123]{Bh07} as
\begin{equation} \label{eqdef2}
 \mathrm{tr}_1 { H}=\sum\limits_{i=1}^n H_{i,i} , 
 \end{equation}
 and 
 \begin{equation*}
\mathrm{tr}_2{ H}=\bigl[ \mathrm{tr}H_{i,j}\bigr]_{i,j=1}^n. 
\end{equation*}
It is easy to see that  both $\tr_1 H$ and $\tr_2 H$ are positive semidefinite 
whenever ${ H} \in \mathbb{M}_n(\mathbb{M}_k)$ is positive semidefinite; see, e.g. \cite[p. 237]{Zhang11} or \cite{Zha12} 
for more details. 
The first or second partial trace is a source for matrix inequalities 
and  extensively studied in recent years; 
see \cite{Ando14,Choi18,FLT20,HuangLi20,Lin16} for related topics.

Let $A=[A_{i,j}]_{i,j=1}^n$ be an $n\times n$ block matrix with each block 
being a $k\times k$ matrix. 
The usual transpose of $A$ is defined as 
$A^T= [A_{j,i}^T]_{i,j=1}^n$. 
We define the {\it partial transpose} of $A$ by $A^{\tau}=[A_{j,i}]_{i,j=1}^n$, that is, the partial transpose of 
$A$ is the matrix obtained by 
transposing blocks of $A$ independently. More precisely,   
\[  A^T = \begin{bmatrix} 
A_{1,1}^T & \cdots  & A_{n,1}^T \\
\vdots & \ddots & \vdots \\
A_{1,n}^T & \cdots & A_{n,n}^T
\end{bmatrix} ~~\text{and}~~ 
A^{\tau} = \begin{bmatrix} 
A_{1,1} & \cdots  & A_{n,1} \\
\vdots & \ddots & \vdots \\
A_{1,n} & \cdots & A_{n,n}
\end{bmatrix} . \] 
Although $A$ and $A^{\tau}$ have the same trace, 
they may have different eigenvalues, so 
they are not necessarily similar.  Moreover, 
it is known that $A\ge 0$ does not necessarily imply $A^{\tau}\ge 0$. 
For example, taking 
\begin{equation} \label{eqeq1}
 A=\begin{bmatrix} 
A_{1,1} & A_{1,2} \\ A_{2,1} & A_{2,2} \end{bmatrix}=
\left[\begin{array}{cc;{2pt/2pt}cc}
1&0&0&1\\
0&0&0&0\\
\hdashline[2pt/2pt] 
0&0&0&0\\
1&0&0&1 \end{array}\right] .
\end{equation}
We can see  from the definition that 
\[ A^{\tau}=\begin{bmatrix} 
A_{1,1} & A_{2,1} \\ A_{1,2} & A_{2,2} \end{bmatrix}=
\left[\begin{array}{cc;{2pt/2pt}cc}
1&0&0&0\\
0&0&1&0\\
\hdashline[2pt/2pt] 
0&1&0&0\\
0&0&0&1 \end{array}\right]. \]
One could easily observe that $A$ is positive semidefinite,  
but $A^{\tau}$ is not positive semidefinite 
since it contains a principal submatrix 
$\left[\begin{smallmatrix}0 & 1 \\ 1 &0 \end{smallmatrix}\right] \ngeq 0$. 
Moreover, the eigenvalues of $A$ are $2,0,0,0$, 
and the eigenvalues of $A^{\tau}$ are $1,1,1,-1$, 
so $A$ and $A^{\tau}$ are not similar. 
In addition, replacing $A_{1,1}$ in the above matrix by 
$\left[\begin{smallmatrix}1 & 0 \\ 0 &1 \end{smallmatrix}\right]$ 
also gives a well example. 
From this discussion, we say that 
$A$ is  {\it positive partial transpose} (or PPT for short) if 
 both $A$ and $A^{\tau}$ are positive semidefinite. 
 We recommend  
\cite{FLTmia,Lee2015,Lin14,Linoam2015} for recent progress.

The paper is organized as follows.   
In Section \ref{sec2}, we shall review some preliminaries 
for a  class of matrices 
whose numerical ranges are contained in a sector (known as the sector matrices). 
This is a natural extension of the class of positive definite matrices. 
In Section \ref{sec3}, 
we shall study the recent results involving 
 the Fiedler--Markham inequality. 
We provide an  extension of a result of Lin \cite{Lin16}, 
and our result is also an improvement of a result of Kuai \cite{Kua17}; 
see Theorem \ref{thm21}. 
Moreover, we shall  extend a result of Choi  \cite{Choi17} 
to the so-called sector matrices; 
see Theorem \ref{thm26}. 
In Section \ref{sec4}, we give  a unified treatment of 
a result of Ando \cite{Ando14} (or see \cite{Lin16b}) 
as well as a recent result of Li, Liu and Huang  \cite{HuangLi20}.  
Our new treatment is more concise than original proof.  
Moreover, we also present some Ando type determinant inequalities 
for  partial traces, and then we extend these inequalities
 to  sector matrices; see Theorems \ref{prop44} 
 and  \ref{thm36}. 
In Section \ref{sec5}, we shall prove some inequalities 
for positive semidefinite $2\times 2$ block matrices; 
see Theorems \ref{thm52}, \ref{thm53} and \ref{thm54}. 
Our result extend slightly the recent elegant work 
on trace inequalities 
that proved by  Kittaneh and Lin \cite{KL17} and Lin \cite{Lin14} as well.

\section{Preliminaries}
\label{sec2}

Recall that $\sigma_i(A)$ denotes  $i$-th largest singular value of $A$. 
When $A$ is Hermitian, we know that all eigenvalues of $A$ are real numbers,  
and we write  $\lambda_i(A)$ for the $i$-th largest eigenvalue. 
The numerical range of $A\in \mathbb{M}_n$ is defined by 
\[ W(A)=\{x^*Ax : x\in \mathbb{C}^n,x^*x=1\}. \]
For $\alpha \in [0,{\pi}/{2})$, let $S_{\alpha}$ be the sector on  complex plane defined as  
\[ S_{\alpha}=\{z\in \mathbb{C}: \Re z>0,|\Im z|\le (\Re z)\tan \alpha \} 
=\{re^{i\theta } : r>0,|\theta |\le \alpha \}. \] 
For $A\in \mathbb{M}_n$, the Cartesian (Toeptliz) decomposition 
is given as 
$A=\Re A+ i \cdot \Im A$, where 
\[ \text{$\Re A=\frac{1}{2}(A+A^*)$ ~and~ $\Im A=\frac{1}{2i}(A-A^*)$.} \] 
We know from the definition that if $W(A)\subseteq S_0$, 
then $A$ is positive definite.  
Moreover, 
it is easy to verify that  if $W(A)\subseteq S_{\alpha}$ for some 
$\alpha \in [0, {\pi}/{2})$, 
then $\Re (A)$ is positive definite. 
Such class of matrices whose numerical ranges are contained in a sector 
is called the sector matrices class. 
Clearly, the concept of sector matrices is an extension  of positive definite matrices. 
Over the past few years, 
various studies on sector matrices have been obtained in the literature; 
see, e.g., \cite{Choi19, Jiang19, Kua17, Lin15, YLC19, Zhang15}.

\medskip 

Before starting our results, 
we now summarise the following lemmas. 

\begin{lemma}  \cite{Lin15} \label{lem22}
Let $0\le \alpha < {\pi}/{2}$ and 
$A\in \mathbb{M}_n$ with $W(A)\subseteq S_{\alpha}$. Then 
\[ |\det A| \le (\sec \alpha)^n \det (\Re A). \]
\end{lemma}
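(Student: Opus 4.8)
The plan is to prove Lemma~\ref{lem22} by reducing the general sector case to the well-known Hadamard-type bound for positive definite matrices, using the rotation trick that is standard for sector matrices. First I would note that since $W(A)\subseteq S_\alpha$, the real part $\Re A$ is positive definite, so $\det(\Re A)\neq 0$ and the inequality makes sense. Write $B=\Re A$ and $C=\Im A$, so $A=B+iC$ with $B>0$ and $C$ Hermitian. The key structural fact is that $W(A)\subseteq S_\alpha$ is equivalent to the operator inequality $\pm C \le (\tan\alpha) B$, i.e. $-(\tan\alpha)B \le C \le (\tan\alpha)B$; this follows by testing the numerical range condition $|\Im(x^*Ax)|\le (\tan\alpha)\Re(x^*Ax)$ on unit vectors and homogenizing.

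Next I would use the congruence $A = B^{1/2}(I + iB^{-1/2}CB^{-1/2})B^{1/2}$. Setting $K = B^{-1/2}CB^{-1/2}$, which is Hermitian, the sector condition transforms into $-(\tan\alpha) I \le K \le (\tan\alpha) I$, i.e. every eigenvalue $\mu_j$ of $K$ satisfies $|\mu_j|\le \tan\alpha$. Then
\[
\det A = \det(B)\,\det(I+iK) = \det(B)\prod_{j=1}^n (1+i\mu_j),
\]
so
\[
|\det A| = \det(B)\prod_{j=1}^n |1+i\mu_j| = \det(\Re A)\prod_{j=1}^n \sqrt{1+\mu_j^2}.
\]
Since $|\mu_j|\le\tan\alpha$ we get $\sqrt{1+\mu_j^2}\le \sqrt{1+\tan^2\alpha}=\sec\alpha$ (using $\alpha\in[0,\pi/2)$ so $\sec\alpha>0$), and multiplying these $n$ bounds yields $|\det A|\le (\sec\alpha)^n\det(\Re A)$, as desired.

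I do not expect a serious obstacle here; the only point requiring a little care is justifying the equivalence between the geometric sector condition on $W(A)$ and the two-sided operator inequality $\pm\Im A \le (\tan\alpha)\Re A$, and then checking it is preserved under the congruence by $B^{-1/2}$ (which it is, since $x^*(\tan\alpha\, B \pm C)x \ge 0$ for all $x$ is congruence-invariant). Everything else is the multiplicativity of the determinant and an elementary scalar inequality. An alternative, essentially equivalent route would be to apply the known fact that for $W(A)\subseteq S_\alpha$ one has $\Re(e^{i\beta}A)>0$ for all $|\beta|<\pi/2-\alpha$ together with the Fan--Hoffman / Ostrowski-type estimates, but the direct congruence computation above is the cleanest and is what I would write up.
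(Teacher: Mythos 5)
Your argument is correct: the reduction of $W(A)\subseteq S_\alpha$ to the operator inequality $\pm\Im A\le(\tan\alpha)\Re A$, the congruence $A=B^{1/2}(I+iK)B^{1/2}$ with $K=B^{-1/2}(\Im A)B^{-1/2}$, and the scalar bound $|1+i\mu_j|\le\sec\alpha$ together give a complete and clean proof. Note that the paper itself does not prove Lemma~\ref{lem22} but quotes it from \cite{Lin15}; your congruence computation is essentially the standard argument used there, so there is nothing to add beyond the observation that your write-up is a valid self-contained proof of the cited result.
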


\begin{lemma}  {\cite[p. 510]{HJ13}}  \label{lem23}
Let $X$ be an $n$-square complex matrix. Then 
\[ \lambda_i(\Re X) \le \sigma_i(X),\quad i=1,2,\ldots ,n. \]
Moreover, if $\Re X$ is positive definite, then 
\[ \det \Re X + |\det \Im X| \le |\det X|. \]
\end{lemma}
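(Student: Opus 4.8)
The plan is to treat the two assertions separately, reducing each to a standard inequality.

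For the singular-value bound (which is the Fan--Hoffman inequality), I would invoke the Courant--Fischer min--max principle. Since $\Re X$ is Hermitian,
\[ \lambda_i(\Re X) = \min_{\dim V = n-i+1}\ \max_{x\in V,\ \|x\|=1} x^*(\Re X)x , \]
and for any unit vector $x$ one has $x^*(\Re X)x = \Re(x^*Xx) \le |x^*Xx| \le \|Xx\|$ by the Cauchy--Schwarz inequality. Applying the same min--max principle to the positive semidefinite matrix $X^*X$ gives $\sigma_i(X)^2 = \lambda_i(X^*X) = \min_{\dim V=n-i+1}\max_{x\in V,\ \|x\|=1}\|Xx\|^2$, hence $\sigma_i(X) = \min\max\|Xx\|$ over the same family of subspaces. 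Comparing the two expressions termwise yields $\lambda_i(\Re X)\le\sigma_i(X)$ for every $i$. This part is routine.

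For the determinant inequality, set $A=\Re X$ and $B=\Im X$, so that $A>0$, $B=B^*$, and $X=A+iB$. I would use the congruence $X = A^{1/2}(I+iC)A^{1/2}$ with $C=A^{-1/2}BA^{-1/2}$ Hermitian, say with real eigenvalues $\mu_1,\dots,\mu_n$. Since $I+iC$ is normal with eigenvalues $1+i\mu_j$, taking moduli of determinants gives $|\det X| = \det(A)\prod_j\sqrt{1+\mu_j^2}$, while $|\det\Im X| = \det(A)\prod_j|\mu_j|$ and $\det\Re X = \det A$. Dividing through by $\det A>0$, the desired inequality becomes the scalar statement
\[ 1 + \prod_{j=1}^n |\mu_j| \;\le\; \prod_{j=1}^n \sqrt{1+\mu_j^2}. \]

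I expect this last inequality to be the crux, and in particular the observation that it tacitly requires $n\ge 2$ (it already fails for $n=1$, e.g.\ $X=1+i$). I would prove it by induction on $n\ge 2$: for $n=2$ it reduces, after squaring both sides, to $(|\mu_1|-|\mu_2|)^2\ge 0$; and for the passage from $n-1$ to $n$, writing $P=\prod_{j=1}^{n-1}|\mu_j|\ge 0$ and using the inductive hypothesis, it suffices to check $\sqrt{1+\mu_n^2}\,(1+P)\ge 1+|\mu_n|\,P$, which is immediate from $\sqrt{1+\mu_n^2}\ge\max\{1,|\mu_n|\}$. Tracking the equality cases (e.g.\ $\mu_1=\cdots=\mu_n$) shows the bound is sharp.
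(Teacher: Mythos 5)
The paper contains no proof of Lemma \ref{lem23}; it is simply quoted from \cite[p.~510]{HJ13}, so your argument is being compared against a citation rather than an in-paper proof. Your proof is correct and self-contained: the first claim is the standard Fan--Hoffman argument (Courant--Fischer for $\Re X$ and for $X^*X$, combined with $x^*(\Re X)x=\Re(x^*Xx)\le|x^*Xx|\le\|Xx\|$), and the second claim is handled by the usual congruence $X=A^{1/2}(I+iC)A^{1/2}$ with $A=\Re X>0$, $C=A^{-1/2}(\Im X)A^{-1/2}$, which reduces everything to the scalar inequality $1+\prod_j|\mu_j|\le\prod_j\sqrt{1+\mu_j^2}$ for the eigenvalues $\mu_j$ of $C$; your induction with base case $n=2$ settles that, and the induction step via $\sqrt{1+\mu_n^2}\ge\max\{1,|\mu_n|\}$ is sound. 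Your observation that the determinantal inequality fails for $n=1$ is correct and worth recording: $X=1+i$ gives $\det\Re X+|\det\Im X|=2>\sqrt{2}=|\det X|$, so the second assertion of the lemma tacitly requires $n\ge 2$, a caveat the bare citation to \cite{HJ13} obscures. This does not harm the paper: in the proof of Theorem \ref{thm21} the lemma is applied to $\tr_1 H\in\mathbb{M}_k$, and although the intermediate display quotes the full inequality, the conclusion actually used is only $\det\Re(\tr_1 H)\le|\det(\tr_1 H)|$ (Ostrowski--Taussky), valid in every dimension, and likewise only this weaker consequence is needed in Theorem \ref{thm36}. One minor slip in your closing parenthetical: for $n\ge 3$ the choice $\mu_1=\cdots=\mu_n\neq 0$ does not give equality in the scalar inequality (equality then forces all $\mu_j=0$); nontrivial equality occurs only for $n=2$ with $|\mu_1|=|\mu_2|$. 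This remark is not load-bearing and does not affect the validity of your proof.
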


The following lemma is called the Fischer inequality, 
which gives an upper bound for the determinant of a positive
semidefinite block matrix in terms of the determinants of its principal diagonal
blocks. In particular, when all blocks have order $1\times 1$,  
this inequality is also known as the Hadamard inequality; 
see, e.g., \cite[p. 506]{HJ13} and \cite[p. 217]{Zhang11}. 

\begin{lemma} \label{lemfis}
Let $H=[H_{i,j}]_{i,j=1}^n \in \mathbb{M}_n(\mathbb{M}_k)$ be positive semidefinite. Then 
\[  \det H \le \prod_{i=1}^n \det H_{i,i}. \]
\end{lemma}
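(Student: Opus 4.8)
The plan is to prove the statement by induction on $n$, with the essential work concentrated in the case $n=2$, which is handled by a Schur complement argument. Before starting I would record two standard facts: (i) a positive semidefinite matrix has nonnegative determinant, and, more precisely, if $H\ge 0$ and $v^*Hv=0$ then $Hv=0$; (ii) if $X,Y\in\mathbb{M}_k$ are Hermitian with $0\le X\le Y$, then $\det X\le\det Y$. Fact (ii) is immediate when $X>0$, since then $\det Y=\det X\cdot\det\bigl(I+X^{-1/2}(Y-X)X^{-1/2}\bigr)\ge\det X$ because $I+X^{-1/2}(Y-X)X^{-1/2}$ has all eigenvalues at least $1$; the general positive semidefinite case follows by applying this to $X+\varepsilon I$ and letting $\varepsilon\to 0^+$.

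For the base case $n=2$, write $H=\begin{bmatrix} A & B \\ B^* & C\end{bmatrix}$ with $A,C\in\mathbb{M}_k$ positive semidefinite. If $A$ is singular, choose $x\ne 0$ with $Ax=0$ and set $v=\begin{bmatrix} x\\ 0\end{bmatrix}$; then $v^*Hv=x^*Ax=0$, so by (i) $v\in\ker H$, hence $\det H=0$, while $\det A\,\det C=0$ since $\det A=0$ and $\det C\ge 0$. If $A$ is nonsingular it is positive definite, and the Schur complement identity gives $\det H=\det A\cdot\det\bigl(C-B^*A^{-1}B\bigr)$. Since $H\ge 0$ and $A>0$, the Schur complement satisfies $0\le C-B^*A^{-1}B\le C$, so by (ii) $\det\bigl(C-B^*A^{-1}B\bigr)\le\det C$, and therefore $\det H\le\det A\,\det C$.

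For the inductive step, assume the inequality holds for block matrices with $n-1$ diagonal blocks. Given $H=[H_{i,j}]_{i,j=1}^n$, view it as a $2\times 2$ block matrix with $(1,1)$ entry $H_{1,1}$ and $(2,2)$ entry the $(n-1)\times(n-1)$ block matrix $H'=[H_{i,j}]_{i,j=2}^n$. The $n=2$ case yields $\det H\le\det H_{1,1}\cdot\det H'$; moreover $H'$ is a principal submatrix of $H$, hence positive semidefinite, so the induction hypothesis gives $\det H'\le\prod_{i=2}^n\det H_{i,i}$. Multiplying these two bounds gives the claim, and $n=1$ is trivial.

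The argument is entirely routine once (i) and (ii) are in place; the only point requiring genuine care is the monotonicity of the determinant on the positive semidefinite cone in the form where $X$ is allowed to be merely positive semidefinite, which is why I would include the short $\varepsilon$-perturbation remark rather than quoting it outright. The singular-block case is the other place where one must be slightly careful, but fact (i) disposes of it cleanly.
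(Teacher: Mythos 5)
Your proof is correct. There is, however, no in-paper argument to compare it with: the paper states this lemma as the classical Fischer inequality and simply cites standard references (Horn--Johnson and Zhang), so your Schur-complement induction is essentially the standard textbook proof of the quoted result rather than an alternative to anything in the paper. Two small points of polish. First, in your fact (ii) the $\varepsilon$-perturbation should be applied to the pair $(X+\varepsilon I,\,Y+\varepsilon I)$: you need the smaller matrix positive definite \emph{and} still dominated, and $X+\varepsilon I\le Y$ may fail, whereas $X+\varepsilon I\le Y+\varepsilon I$ always holds; then let $\varepsilon\to 0^{+}$ using continuity of the determinant. Second, your base case is stated for $A,C\in\mathbb{M}_k$ of equal size, but in the inductive step you apply it with a $k\times k$ block against an $(n-1)k\times(n-1)k$ block; the Schur-complement argument works verbatim for diagonal blocks of different sizes (with a rectangular off-diagonal block), so the two-block case should simply be stated in that generality. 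Neither point affects the validity of the argument.
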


\begin{lemma}    \label{prop25}
If $H\in \mathbb{M}_n(\mathbb{M}_k)$ satisfies $W(H)\!\subseteq S_{\alpha}$, then 
$W(\tr_1 H) \!\subseteq S_{\alpha}$ and $W(\tr_2 H)\! \subseteq S_{\alpha}$, i.e.,  
if $H$ is a sector matrix with angle $\alpha \in [0, \pi /2)$, then 
so are $\tr_1 H$ and $\tr_2 H$. 
\end{lemma}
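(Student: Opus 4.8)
The plan is to unwind the definition of the numerical range together with the characterization of the partial traces via the adjoint-of-embedding relations in \eqref{eqdef}. Recall that $W(H)\subseteq S_\alpha$ is equivalent to saying that for every unit vector $v$, the number $v^*Hv$ lies in $S_\alpha$, which in turn is equivalent to the pair of scalar inequalities $\Re(v^*Hv)>0$ and $|\Im(v^*Hv)|\le(\tan\alpha)\Re(v^*Hv)$, i.e. $\Re\bigl(e^{\pm i\alpha}v^*Hv\bigr)\ge 0$ for both signs (together with nondegeneracy). Since $S_\alpha$ is a convex cone, it suffices to show that every value $x^*(\tr_1 H)x$ with $x\in\mathbb{C}^k$ a unit vector is a \emph{nonnegative combination} of values of the form $w^*Hw$ with $w\in\mathbb{C}^n\otimes\mathbb{C}^k$; convexity and cone-closedness of $S_\alpha$ then finish it immediately.

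First I would treat $\tr_1 H$. Using the visualized formula \eqref{eqdef2}, $\tr_1 H=\sum_{i=1}^n H_{i,i}$, so for a unit vector $x\in\mathbb{C}^k$ we get
\[
x^*(\tr_1 H)x=\sum_{i=1}^n x^*H_{i,i}x=\sum_{i=1}^n (e_i\otimes x)^*H(e_i\otimes x),
\]
where $e_1,\dots,e_n$ is the standard basis of $\mathbb{C}^n$. Each summand equals $\|e_i\otimes x\|^2\cdot w_i^*Hw_i$ with $w_i=(e_i\otimes x)/\|e_i\otimes x\|$ a unit vector, hence each summand lies in $S_\alpha\cup\{0\}$; in fact each $e_i\otimes x$ is a unit vector here since $\|e_i\|=\|x\|=1$. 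Therefore $x^*(\tr_1 H)x$ is a sum of $n$ elements of the closed cone $\overline{S_\alpha}$, hence lies in $\overline{S_\alpha}$. To upgrade $\overline{S_\alpha}$ to $S_\alpha$ one notes $\Re(\tr_1 H)=\tr_1(\Re H)$ is positive definite because $\Re H$ is positive definite (as $W(H)\subseteq S_\alpha$) and the first partial trace of a positive definite matrix is positive definite; this rules out the boundary ray degeneracy, giving $W(\tr_1 H)\subseteq S_\alpha$.

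For $\tr_2 H=[\tr H_{i,j}]_{i,j=1}^n$, I would argue analogously: for a unit vector $y=(y_1,\dots,y_n)^T\in\mathbb{C}^n$,
\[
y^*(\tr_2 H)y=\sum_{i,j}\overline{y_i}y_j\,\tr H_{i,j}=\sum_{\ell=1}^k\Bigl(\sum_{i,j}\overline{y_i}y_j\,(H_{i,j})_{\ell\ell}\Bigr)=\sum_{\ell=1}^k (y\otimes f_\ell)^*H(y\otimes f_\ell),
\]
where $f_1,\dots,f_k$ is the standard basis of $\mathbb{C}^k$ and I use that $(H_{i,j})_{\ell\ell}=(y\otimes f_\ell)$-type quadratic entries assemble correctly (this is just the block-entry bookkeeping of $\tr H_{i,j}=\sum_\ell (H_{i,j})_{\ell\ell}$). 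Again each $y\otimes f_\ell$ is a unit vector, so the right-hand side is a sum of $k$ elements of $\overline{S_\alpha}$, hence in $\overline{S_\alpha}$; positive definiteness of $\Re(\tr_2 H)=\tr_2(\Re H)$ removes the boundary, yielding $W(\tr_2 H)\subseteq S_\alpha$. Alternatively, and perhaps more cleanly, I could invoke the abstract definition \eqref{eqdef} directly: $\langle X,\tr_1 H\rangle=\langle I_n\otimes X,H\rangle$, and choose $X=xx^*$ to recover $x^*(\tr_1 H)x=\langle I_n\otimes xx^*,H\rangle$, which is a sum of diagonal-type pairings; but the index bookkeeping above is concrete and transparent enough that I would present that version.

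The only real subtlety — and the step I expect to need the most care — is the passage from the closed cone $\overline{S_\alpha}$ back to the open sector $S_\alpha$ (the paper's $S_\alpha$ has $\Re z>0$, a strict inequality). This is exactly where one must use that $W(H)\subseteq S_\alpha$ forces $\Re H>0$ (strictly), and that the partial trace map preserves strict positive definiteness — both facts are elementary (the partial trace is a positive, trace-preserving, hence faithful-on-positive-definites map) but they do need to be stated. Everything else is a routine identity: expressing a quadratic form of $\tr_1 H$ or $\tr_2 H$ as a finite sum of quadratic forms of $H$ evaluated at unit vectors, and appealing to convexity of the sector.
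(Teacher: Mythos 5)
Your proof is correct, and it is essentially the alternative argument the paper's remark after the lemma alludes to (``based on the second equivalent definition (\ref{eqdef2})'') rather than the paper's own proof. The paper works from the abstract adjoint definition (\ref{eqdef}): it writes $x^*(\tr_1 H)x$ via $\langle I_n\otimes(xx^*),H\rangle$, takes a spectral decomposition $I_n\otimes(xx^*)=\sum_i\lambda_iu_iu_i^*$, and bounds the ratio $\left|\Im\bigl(x^*(\tr_1 H)x\bigr)\right|/\Re\bigl(x^*(\tr_1 H)x\bigr)$ by $\max_i\left|\Im(u_i^*Hu_i)\right|/\Re(u_i^*Hu_i)\le\tan\alpha$ through a mediant-type inequality; it treats only $\tr_1 H$, deferring $\tr_2 H$ to Kuai. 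You instead use the concrete formulas to exhibit $x^*(\tr_1 H)x=\sum_{i=1}^n(e_i\otimes x)^*H(e_i\otimes x)$ and $y^*(\tr_2 H)y=\sum_{\ell=1}^k(y\otimes f_\ell)^*H(y\otimes f_\ell)$ as sums of points of $W(H)$ at unit vectors, and conclude because $S_\alpha$ is closed under addition; your vectors $e_i\otimes x$ are in fact an explicit eigenbasis realization of the paper's $I_n\otimes(xx^*)$, so the underlying mechanism (the quadratic form of a partial trace is a positive combination of quadratic forms of $H$) is the same, but your cone-additivity conclusion replaces the ratio estimate and is arguably cleaner, and you cover both partial traces uniformly. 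One small simplification: the detour through $\overline{S_\alpha}$ and the positive definiteness of $\Re(\tr_1 H)$ is not needed, since each summand $(e_i\otimes x)^*H(e_i\otimes x)$ already lies in $S_\alpha$ (the $e_i\otimes x$ are unit vectors) and a finite sum of elements of $S_\alpha$ lies in $S_\alpha$ directly, as $\Re$ adds and $|\Im(z_1+z_2)|\le|\Im z_1|+|\Im z_2|\le(\tan\alpha)\Re(z_1+z_2)$.
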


We remark that this lemma was partially proved in 
\cite[Proposition 3.2]{Kua17} for the case $\tr_2 H$. 
Motivated by \cite{Kua17}, 
we here include a detailed proof for the remaining case 
$\tr_1 H$. 

\medskip 

\begin{proof}
Consider the Cartesian decomposition $H=\Re H +i\cdot \Im H$, then 
\[ \tr_1 H = \tr_1 (\Re H) +i\cdot \tr_1 (\Im H). \] 
For every $x\in \mathbb{C}^k$ with $x^*x=1$, as $\Re H$ is positive definite, we get 
\[ \Re \bigl( x^*(\tr_1 H)x\bigr)= x^* \bigl( \Re(\tr_1 H) \bigr)x 
= x^* \bigl( \tr_1 (\Re H) \bigr)x >0. \]
On the other hand, by a direct computation, 
\[ \frac{\left|\Im \bigl( x^*(\tr_1 H)x \bigr)\right|}{\Re \bigl( x^*(\tr_1 H)x \bigr)} 
= \frac{\left| x^*(\tr_1 (\Im H))x \right|}{ x^*(\tr_1 (\Re H))x }
= \frac{\left| \langle  xx^*, \tr_1 (\Im H) \rangle \right|}{ 
\langle xx^*, \tr_1 (\Re H) \rangle } .    \]
Note that $I_n\otimes (xx^*)$ is positive semidefinite.  
We consider the spectral decomposition 
\[ I_n\otimes (xx^*) = 
\sum_{i=1}^{nk}\lambda_iu_iu_i^*,\]
 where $\lambda_i \ge 0$ and 
$u_i$ are unit vectors in $ \mathbb{C}^{nk}$. 
By the definition  in (\ref{eqdef}), 
it follows that 
\begin{equation*}
 \begin{aligned} 
\frac{\left| \langle  xx^*, \tr_1 (\Im H) \rangle \right|}{ 
\langle xx^*, \tr_1 (\Re H) \rangle }  
& =\frac{\left| \langle  I_n\otimes (xx^*), \Im H \rangle \right|}{ 
\langle I_n \otimes (xx^*), \Re H \rangle }  =
 \frac{\left| \sum_{i=1}^{nk} \lambda_i \langle  u_iu_i^*, \Im H \rangle \right|}{ 
\sum_{i=1}^{nk} \lambda_i \langle u_iu_i^*, \Re H \rangle } \\
&\le \frac{ \sum_{i=1}^{nk} \lambda_i \left| u_i^*(\Im H)u_i  \right|}{ 
\sum_{i=1}^{nk} \lambda_i u_i^*(\Re H )u_i }  \le 
\max_{1\le i\le nk} \frac{\left| u_i^*(\Im H)u_i  \right|}{u_i^*(\Re H )u_i} 
 = \max_{1\le i\le nk} \frac{\left| \Im (u_i^* Hu_i)  \right|}{ 
\Re (u_i^*Hu_i) }.
\end{aligned}
\end{equation*}
This completes the proof.  
\end{proof}

{\bf Remark.} 
Based on the second equivalent definition (\ref{eqdef2}), 
one could also give other ways to prove  Lemma  \ref{prop25}. 
We leave the details for the interested reader.

\section{Extensions on  Fiedler--Markham's inequality}
\label{sec3}

Let ${ H}=[H_{i,j}]_{i,j=1}^n \in \mathbb{M}_n(\mathbb{M}_k)$ be positive semidefinite.  
Recall that  both $\tr_1 H$ and $\tr_2 H$ 
are positive semidefinite; see, e.g., \cite{Zha12}.  
In 1994, 
Fiedler and Markham \cite[Corollary 1]{FM94} 
  proved a celebrated determinant inequality 
involving the second partial trace. 

\begin{theorem} \cite{FM94}  \label{thmfm}
Let ${ H}=[H_{i,j}]_{i,j=1}^n \in \mathbb{M}_n(\mathbb{M}_k)$ be
 positive semidefinite. Then 
\begin{equation*} \label{eqfm}
\left(\frac{\det \bigl( \tr_2 H\bigr)}{k }\right)^k \ge \det { H} .  
\end{equation*}
\end{theorem}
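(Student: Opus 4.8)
The plan is to convert the ``unfavourable'' appearance of the second partial trace into a ``favourable'' one for the Fischer inequality by permuting the underlying orthonormal basis, and then to recover the constant $k$ from Minkowski's determinant inequality together with the arithmetic--geometric mean inequality. Roughly: after reordering the $nk$ basis vectors of $\mathbb{C}^{n}\otimes\mathbb{C}^{k}$, the matrix $\tr_2 H$ becomes the sum of the $k$ diagonal blocks of a unitary conjugate of $H$, so that Lemma~\ref{lemfis} applies in the direction we want.

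In detail, let $e_1,\dots,e_k$ and $f_1,\dots,f_n$ denote the standard bases of $\mathbb{C}^k$ and $\mathbb{C}^n$, put $V_a=I_n\otimes e_a\colon\mathbb{C}^n\to\mathbb{C}^n\otimes\mathbb{C}^k$, and set $A_a:=V_a^*HV_a\in\mathbb{M}_n$ for $a=1,\dots,k$. A direct computation from $H=[H_{i,j}]$ shows that the $(i,j)$ entry of $A_a$ equals the $(a,a)$ entry of the block $H_{i,j}$; summing over $a$ gives $\sum_{a=1}^k A_a=\bigl[\tr H_{i,j}\bigr]_{i,j=1}^n=\tr_2 H$. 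Each $A_a$ is positive semidefinite, being the principal submatrix of $H\ge 0$ on the rows and columns indexed by $\{f_i\otimes e_a:1\le i\le n\}$. Since these $k$ index sets ($a=1,\dots,k$) partition the standard basis of $\mathbb{C}^n\otimes\mathbb{C}^k$, there is a permutation matrix $P$ such that $PHP^*$, read as a $k\times k$ block matrix with $n\times n$ blocks, has diagonal blocks exactly $A_1,\dots,A_k$. Applying the Fischer inequality (Lemma~\ref{lemfis}) to $PHP^*\ge 0$ and using $\det(PHP^*)=\det H$ yields
\[
\det H\ \le\ \prod_{a=1}^{k}\det A_a .
\]

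To finish, set $B:=\tr_2 H=\sum_{a=1}^k A_a$. By Minkowski's determinant inequality (superadditivity of $X\mapsto(\det X)^{1/n}$ on positive semidefinite matrices), $(\det B)^{1/n}\ge\sum_{a=1}^k(\det A_a)^{1/n}$, and then AM--GM gives $\sum_{a=1}^k(\det A_a)^{1/n}\ge k\bigl(\prod_{a=1}^k\det A_a\bigr)^{1/(nk)}$. Hence $\det B\ge k^{n}\bigl(\prod_{a=1}^k\det A_a\bigr)^{1/k}$, that is,
\[
\prod_{a=1}^k\det A_a\ \le\ \Bigl(\frac{\det B}{k^{n}}\Bigr)^{\!k}\ \le\ \Bigl(\frac{\det(\tr_2 H)}{k}\Bigr)^{\!k},
\]
the last step because $n\ge 1$ and $\det B\ge 0$. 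Chaining the two displays proves the theorem; in fact it yields the formally stronger bound $\det H\le\bigl(k^{-n}\det(\tr_2 H)\bigr)^k$, with equality when $H$ is a scalar multiple of the identity. The only delicate point is the bookkeeping in the middle step: one must check simultaneously that the compressions $V_a^*HV_a$ add up to $\tr_2 H$ and that, after the permutation $P$, they are genuinely the diagonal blocks of a positive semidefinite matrix, so that Lemma~\ref{lemfis} legitimately applies. Everything after that reshaping is a one-line consequence of Minkowski's inequality and AM--GM, so I expect this ``swap'' of the two tensor factors to be the only real idea needed.
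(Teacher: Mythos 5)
Your proof is correct, and it in fact establishes the sharper bound $\det H\le\bigl(\det(\tr_2 H)/k^{n}\bigr)^{k}$, i.e.\ the first inequality of Theorem~\ref{thmstrong}, from which the stated Theorem~\ref{thmfm} follows since $k^n\ge k$. The paper does not reprove this theorem; it cites Fiedler--Markham and Lin, describing their arguments as superadditivity of the determinant (sharpened by the Fan--Ky inequality (\ref{eqfk})), and Lin's alternative proof rests on a unitary-averaging identity of the type in Lemma~\ref{lem24}, which is also the route the paper takes for the sector extension in Theorem~\ref{thm21}. Your route is different in mechanism but close in spirit to how the paper handles the $\tr_1$-versions: you use the canonical shuffle $H\mapsto PHP^{*}=\widetilde H$ (precisely the rearrangement the paper introduces before Theorem~\ref{thmchoi11}, here exploited through $\tr_2 H=\tr_1\widetilde H$), so that $\tr_2 H$ becomes the sum of the diagonal $n\times n$ blocks $A_a=V_a^*HV_a$ of a permutation-similar positive semidefinite matrix; then Fischer (Lemma~\ref{lemfis}) gives $\det H\le\prod_a\det A_a$, and Minkowski's determinant inequality combined with AM--GM (equivalent, for this purpose, to (\ref{eqfk})) gives $\det(\tr_2 H)\ge k^{n}\bigl(\prod_a\det A_a\bigr)^{1/k}$. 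All the delicate bookkeeping checks out: the compressions $A_a$ are genuine principal submatrices of $H$, they sum to $\tr_2 H$, and permutation similarity preserves positivity and the determinant, so Lemma~\ref{lemfis} applies legitimately. What your approach buys is a short, elementary, self-contained proof that avoids the Pauli-matrix identity and automatically delivers the stronger constant $k^{n}$ of Theorem~\ref{thmstrong}; what the identity-based approach buys is flexibility under unitary conjugation, which is what the paper exploits to reach sector matrices in Theorem~\ref{thm21}.
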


In 2016, Lin \cite{Lin16} revisited this inequality using 
some terminology from quantum information theory,  and 
 gave an alternative proof of  Theorem \ref{thmfm} 
 by applying an important identity 
 connecting $\tr_2 H$ and $H$. 
 Moreover, a natural question is that whether an analogous 
 result corresponding to the Fiedler--Markham inequality 
 holds for $\tr_1 H$.  
 Lin \cite{Lin16} answered this question and proved the following 
 counterpart.

\begin{theorem} \cite{Lin16}  \label{thmlin2016}
Let ${ H}=[H_{ij}]_{i,j=1}^n \in \mathbb{M}_n(\mathbb{M}_k)$ be
 positive semidefinite. Then 
\begin{equation*} \label{eq6lin}
 \left( \frac{\det (\tr_1 H)}{n}\right)^n \ge \det H. 
\end{equation*}
\end{theorem}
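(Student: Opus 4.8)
The plan is to mimic the structure of Lin's proof of the Fiedler--Markham inequality (Theorem~\ref{thmfm}) for $\tr_2 H$, but with the roles of the two tensor factors swapped, which is exactly the passage from $\tr_2$ to $\tr_1$. The crucial ingredient should be an identity expressing $\tr_1 H$ (tensored with a suitable identity) as an average of unitary conjugates of $H$. Concretely, let $U_1,\dots,U_n$ be a family of $n\times n$ unitaries with the property that $\frac1n\sum_{j=1}^n U_j D U_j^* = (\tr D) \cdot \frac{1}{n} I_n$ for every $D\in\mathbb M_n$ (for instance the cyclic shift together with a diagonal phase matrix, i.e.\ a "Weyl--Heisenberg" family, or more simply the $n$ signed-diagonal matrices arising from a Hadamard-type construction). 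Then, writing a block matrix $H=[H_{i,j}]$ and applying $U_j\otimes I_k$ to $H$, one gets
\[
\frac1n \sum_{j=1}^n (U_j\otimes I_k)\, H\, (U_j\otimes I_k)^* \;=\; \frac1n I_n \otimes \Bigl(\sum_{i=1}^n H_{i,i}\Bigr) \;=\; \frac1n\, I_n\otimes (\tr_1 H),
\]
using the visualized form \eqref{eqdef2} of $\tr_1 H$.

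From this identity the determinant inequality follows quickly. Each term $(U_j\otimes I_k)H(U_j\otimes I_k)^*$ is positive semidefinite with the same determinant as $H$. By concavity of $\det^{1/(nk)}$ on the positive semidefinite cone (the Minkowski determinant inequality),
\[
\det\!\Bigl(\tfrac1n\, I_n\otimes (\tr_1 H)\Bigr)^{1/(nk)} \;\ge\; \frac1n\sum_{j=1}^n \det\bigl((U_j\otimes I_k)H(U_j\otimes I_k)^*\bigr)^{1/(nk)} \;=\; (\det H)^{1/(nk)}.
\]
The left-hand side is $\bigl(n^{-nk}\det(\tr_1 H)^n\bigr)^{1/(nk)} = n^{-1}\det(\tr_1 H)^{1/k}$, since $\det(I_n\otimes M)=(\det M)^n$ for $M\in\mathbb M_k$. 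Raising both sides to the power $nk$ and then rearranging gives $\bigl(\det(\tr_1 H)/n\bigr)^n \ge \det H$, as desired. (If one prefers to avoid Minkowski's inequality, the same conclusion can be reached from the identity by an application of the Fischer inequality, Lemma~\ref{lemfis}, to the block matrix $\frac1n I_n\otimes(\tr_1 H)$ together with a direct estimate — but the concavity route is cleanest.)

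The main obstacle I anticipate is producing and verifying the averaging identity with the correct normalization: one needs a finite family of unitaries that "traces out" the first factor exactly, and one must check that conjugating $H$ by $U_j\otimes I_k$ only mixes the block indices (not the entries within blocks) so that the block-diagonal average is precisely $\frac1n I_n\otimes(\tr_1 H)$ rather than something off by a constant. Once that identity is in hand with the factor $1/n$ correctly placed, the rest is a routine application of the multiplicativity of $\det$ under tensor products and the concavity of the determinant's $nk$-th root, so no further difficulty is expected. As a sanity check, equality should hold when $H = \frac1n I_n \otimes P$ for $P\ge 0$, which indeed forces $\tr_1 H = P$ and $\det H = n^{-nk}(\det P)^n$, matching the bound.
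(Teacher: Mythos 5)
Your overall strategy is the right one, but the key identity as you state it is false, and this is a genuine gap. You posit a family of only $n$ unitaries $U_1,\dots,U_n$ on $\mathbb{C}^n$ with $\frac1n\sum_{j=1}^n U_jDU_j^*=\frac{\tr D}{n}I_n$ for \emph{every} $D\in\mathbb{M}_n$. No such family of size $n$ exists once $n\ge 2$: the map $D\mapsto \frac{\tr D}{n}I_n$ is the completely depolarizing channel, whose Choi matrix is $\frac1n I_{n^2}$ and hence has rank $n^2$, so any representation of it as an average of unitary conjugations needs at least $n^2$ unitaries. Your proposed shortcuts also fail concretely: the $n$ signed diagonal matrices coming from a Hadamard-type construction implement the pinching onto the diagonal, so conjugating $H$ by $D_j\otimes I_k$ and averaging yields the block-diagonal matrix with blocks $H_{1,1},\dots,H_{n,n}$, not $\frac1n I_n\otimes(\tr_1 H)$; similarly, the $n$ powers of the cyclic shift alone only average the blocks cyclically. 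Since everything downstream of the identity rests on it, the argument as written does not close.

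The repair is exactly Lemma \ref{lem24} of the paper: take the full Weyl--Heisenberg (generalized Pauli) family of $n^2$ unitaries $X^lY^j$, $1\le l,j\le n$, for which $\frac1n\sum_{l,j=1}^n (X^lY^j\otimes I_k)H(X^lY^j\otimes I_k)^*=I_n\otimes(\tr_1 H)$, i.e.\ the \emph{average} of the $n^2$ conjugates is $\frac1n I_n\otimes(\tr_1 H)$. With that substitution your computation goes through verbatim (the number of terms is irrelevant to the concavity step, since each conjugate has determinant $\det H$): Minkowski's inequality, or the Fan--Ky inequality (\ref{eqfk}), together with $\det(I_n\otimes M)=(\det M)^n$, gives $n^{-nk}\det(\tr_1 H)^n\ge\det H$, which is the \emph{stronger} bound of Theorem \ref{thmstrong} (denominator $n^k$), and the stated theorem follows a fortiori. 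This corrected route is precisely the one the paper uses (see the proof of Theorem \ref{thm21}, specialized to $\alpha=0$). Note also that your equality check $H=\frac1n I_n\otimes P$ saturates the $n^k$ bound, not the stated bound with denominator $n$, unless $k=1$.
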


It is clear that in the proof of both Theorem \ref{thmfm} and 
Theorem \ref{thmlin2016}, 
Fiedler and Markham, and Lin used the superadditivity of determinant functional, which states that 
\[ \det \left(\sum_{i=1}^n H_{i,i} \right) \ge \sum_{i=1}^n \det H_{i,i}
\ge n \left( \prod_{i=1}^n \det H_{i,i} \right)^{1/n}. \] 
This inequality can be improved by the Fan-Ky determinant 
inequality (see  \cite[p. 488]{HJ13}), 
i.e., the log-concavity of the determinant over the cone of positive semidefinite matrices: 
\begin{equation} \label{eqfk} 
 \det \left( \frac{1}{n} \sum_{i=1}^n H_{i,i}\right) 
\ge \left( \prod_{i=1}^n \det H_{i,i}\right)^{1/n}. 
\end{equation}
In addition, we mention here that a careful examination of the new proof 
of Theorem \ref{thmfm} in  \cite{Lin16} can also reveal this improvement. 
This improvement was also pointed out in \cite{Choi17,LinZhang}.  
Next, we state the strong version of Theorem \ref{thmfm} and 
Theorem \ref{thmlin2016}.

\begin{theorem} \label{thmstrong}
Let ${ H}=[H_{ij}]_{i,j=1}^n \in \mathbb{M}_n(\mathbb{M}_k)$ be
 positive semidefinite. Then 
\begin{equation*}  \label{eqfmtr2}
\left(\frac{\det \bigl( \tr_2 H\bigr)}{k^n}\right)^k \ge \det { H} , 
\end{equation*}
and 
\begin{equation*}  \label{eqfmtr1}
 \left( \frac{\det (\tr_1 H)}{n^k}\right)^n \ge \det H. 
\end{equation*}
\end{theorem}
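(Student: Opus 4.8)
The plan is to reduce both displayed inequalities to the Fan–Ky log-concavity inequality \eqref{eqfk} together with the Fischer inequality (Lemma \ref{lemfis}), exactly as Lin did for Theorems \ref{thmfm} and \ref{thmlin2016} but tracking the constants more carefully. First I would treat the $\tr_1 H$ statement. Since $H=[H_{i,j}]_{i,j=1}^n$ is positive semidefinite and $\tr_1 H=\sum_{i=1}^n H_{i,i}$ by \eqref{eqdef2}, the Fan–Ky inequality \eqref{eqfk} gives $\det\!\bigl(\tfrac1n\tr_1 H\bigr)\ge\bigl(\prod_{i=1}^n\det H_{i,i}\bigr)^{1/n}$, hence $\det(\tr_1 H)\ge n^k\bigl(\prod_{i=1}^n\det H_{i,i}\bigr)^{1/n}$ because each $H_{i,i}\in\mathbb{M}_k$ so $\det(\tfrac1n H_{i,i})=n^{-k}\det H_{i,i}$. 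Raising both sides to the $n$-th power yields $\bigl(\det(\tr_1 H)\bigr)^n\ge n^{nk}\prod_{i=1}^n\det H_{i,i}\ge n^{nk}\det H$, where the last step is the Fischer inequality. This is precisely $\bigl(\det(\tr_1 H)/n^k\bigr)^n\ge\det H$.

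For the $\tr_2 H$ statement I would use the known identity, recalled in \cite{Lin16}, connecting $\tr_2 H$ and $H$: writing $U$ for the $k$-fold tensor flip / the partial-trace averaging argument, one has that $H$ is unitarily equivalent, after averaging over a suitable set of $k$ unitaries, to a block matrix whose diagonal blocks each equal $\tr_2 H$ up to the appropriate normalization; equivalently, one applies Fan–Ky to the $k$ matrices appearing along the diagonal in that representation. Concretely, the clean route is: $\tr_2 H=[\tr H_{i,j}]_{i,j=1}^n\in\mathbb{M}_n$, and the identity from \cite{Lin16} exhibits $k^n\det H\le\bigl(\det(\tr_2 H)\bigr)^{?}$ — I would instead mirror the $\tr_1$ argument by noting that $\tr_2 H$ plays the role of the sum of $k$ congruent copies, so Fan–Ky gives $\det\!\bigl(\tfrac1k\,(\text{diag part})\bigr)\ge(\det H)^{?}$. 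Keeping the normalization honest, each of the $k$ ``slices'' lives in $\mathbb{M}_n$, so the factor picked up is $k^{-n}$ per slice, and after raising to the $k$-th power one lands at $\bigl(\det(\tr_2 H)/k^n\bigr)^k\ge\det H$.

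The main obstacle is getting the $\tr_2 H$ half exactly right: unlike $\tr_1 H$, which is literally $\sum_i H_{i,i}$ so Fan–Ky applies verbatim, for $\tr_2 H$ one needs the correct identity expressing $H$ (or a matrix with the same determinant) as built from $k$ blocks each congruent to a scalar multiple of $\tr_2 H$. I would follow \cite[proof of Theorem 2.1]{Lin16} for that identity, then simply substitute \eqref{eqfk} in place of the weaker superadditivity bound $\det(\sum X_i)\ge\sum\det X_i$ that Lin used, and carry the normalization constant $k^{-n}$ (rather than $k^{-1}$) through the $k$-th power. The arithmetic is routine once the identity is in hand; the only place to be careful is not to conflate the two normalizations $k^n$ and $n^k$, which arise because the diagonal blocks in the two settings have sizes $k$ and $n$ respectively.
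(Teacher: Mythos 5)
Your first half (the $\tr_1$ inequality) is correct and is exactly the argument the paper has in mind: Fan--Ky applied to $\tr_1 H=\sum_{i=1}^n H_{i,i}$ gives $\det(\tr_1 H)\ge n^k\bigl(\prod_{i=1}^n\det H_{i,i}\bigr)^{1/n}$, and Fischer's inequality (Lemma \ref{lemfis}) finishes it; the constant bookkeeping $\det(\tfrac1n X)=n^{-k}\det X$ for $X\in\mathbb{M}_k$ is right.

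The $\tr_2$ half, however, has a genuine gap as written: the identity you invoke is misremembered, and the decisive constant $k^n$ is asserted through a vague ``$k$ slices'' picture with literal question-mark exponents rather than derived. The correct identity (the $\tr_2$ analogue of Lemma \ref{lem24}, used by Lin) averages over $k^2$ unitaries of the form $I_n\otimes X^lY^j$, where $X,Y$ are the generalized Pauli matrices on $\mathbb{C}^k$, and yields $\frac{1}{k^2}\sum_{l,j=1}^{k}(I_n\otimes X^lY^j)H(I_n\otimes X^lY^j)^*=\frac1k(\tr_2 H)\otimes I_k$; applying Fan--Ky to these $k^2$ unitary conjugates of $H$ (each with determinant $\det H$) gives $k^{-nk}\bigl(\det(\tr_2 H)\bigr)^k=\det\bigl(\tfrac1k(\tr_2 H)\otimes I_k\bigr)\ge\det H$, which is the claimed inequality --- but none of this is actually carried out in your sketch. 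A cleaner repair, and the one the paper itself points to (via \cite{Li20} and used again in the proof of Theorem \ref{thm26}), avoids the identity altogether: form the rearranged matrix $\widetilde{H}=[G_{l,m}]_{l,m=1}^k\in\mathbb{M}_k(\mathbb{M}_n)$, which is unitarily similar to $H$ (so positive semidefinite with $\det\widetilde{H}=\det H$) and satisfies $\tr_1\widetilde{H}=\tr_2 H$; applying the already-proved $\tr_1$ inequality to $\widetilde{H}$, with the roles of $n$ and $k$ interchanged, gives $\bigl(\det(\tr_2 H)/k^n\bigr)^k\ge\det H$ at once. Either route closes the gap; as submitted, the second inequality is not proved.
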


We  observe in Theorem \ref{thmstrong} that the second inequality   
seems easier to prove than the first inequality 
because it is more convenient to build inequalities on  $\tr_1 H
=\sum_{i=1}^n H_{i,i}$. 
In \cite{Li20}, the authors 
 showed that both inequalities 
can be deduced mutually.

In 2018, Kuai \cite{Kua17} (or see \cite{YLC19})
further extended Theorem \ref{thmstrong} to 
 sector matrices and 
showed that 
if $0\le \alpha < {\pi}/{2}$ and 
$H\in \mathbb{M}_n(\mathbb{M}_k)$ satisfies $W(H)\subseteq S_{\alpha}$, then 
\begin{equation} \label{eqkuaitr2}
  \left| \frac{\det (\tr_2 H)}{k^n} \right|^k \ge (\cos \alpha)^{nk} |\det H|,   
\end{equation}
and 
\begin{equation} \label{eqkuaitr1}
  \left| \frac{\det (\tr_1 H)}{n}\right|^n 
  \ge (\cos \alpha)^{(3n-2)k} |\det H|.  
\end{equation}

Our first goal in this section is to 
improve  Kuai's result (\ref{eqkuaitr1}). 
The key step in our improvement is the following identity 
 connecting $\tr_1(H)$ and $H$, 
which has been applied to quantum information theory, 
such as the sub-additivity of $q$-entropies. 
This identity can be found in 
 \cite[eq.(26)]{JR10} or \cite[Lemma 2]{Bes13}.

\begin{lemma} \label{lem24}
Let $X$ and $Y$ be generalized Pauli matrices on $\mathbb{C}^n$; 
these operators act as $Xe_j=e_{j+1}$ and $Ye_j=e^{2\pi j\sqrt{-1}/n}e_j$, 
where $e_j$ is the $j$-th column of the identity matrix $I_n$ and $e_{n+1}=e_1$. Then 
\begin{equation*} \label{eqhd}
\frac{1}{n}\sum_{l,j=1}^n 
(X^lY^j\otimes I_k)H (X^lY^j\otimes I_k)^*=I_n\otimes (\tr_1 H). 
\end{equation*}
\end{lemma}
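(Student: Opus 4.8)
The plan is to verify the identity by a direct computation on the matrix entries, exploiting the structure of the generalized Pauli matrices. First I would fix the block form $H=[H_{i,j}]_{i,j=1}^n$ with $H_{i,j}\in\mathbb{M}_k$, and observe that each operator $X^lY^j\otimes I_k$ acts blockwise: since $Y$ is diagonal, $Y^j e_s = \omega^{js}e_s$ with $\omega=e^{2\pi\sqrt{-1}/n}$, and $X^l$ is the cyclic shift $e_s\mapsto e_{s+l}$ (indices mod $n$). Hence, as a block matrix, $X^lY^j\otimes I_k$ has in block-row $s+l$, block-column $s$ the scalar-block $\omega^{js}I_k$, and zero elsewhere. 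Conjugating $H$ by this operator therefore sends the $(p,q)$ block of $H$ to position $(p+l,q+l)$ multiplied by the phase $\omega^{jp}\overline{\omega^{jq}} = \omega^{j(p-q)}$.

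Next I would compute the $(a,b)$ block of the left-hand side. Summing over $l$ and $j$, the $(a,b)$ block of $\frac1n\sum_{l,j}(X^lY^j\otimes I_k)H(X^lY^j\otimes I_k)^*$ equals $\frac1n\sum_{l=1}^n\sum_{j=1}^n \omega^{j(a-b)} H_{a-l,\,b-l}$, where the block indices are read mod $n$. The inner sum $\sum_{j=1}^n \omega^{j(a-b)}$ is the standard character sum: it equals $n$ when $a\equiv b\pmod n$ and $0$ otherwise. So only the diagonal blocks $a=b$ survive, and the $(a,a)$ block becomes $\frac1n\cdot n\sum_{l=1}^n H_{a-l,\,a-l} = \sum_{l=1}^n H_{a-l,a-l} = \sum_{i=1}^n H_{i,i} = \tr_1 H$, since $l\mapsto a-l$ is a bijection of $\mathbb{Z}/n\mathbb{Z}$. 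This is exactly the $(a,b)$ block of $I_n\otimes(\tr_1 H)$: diagonal blocks equal $\tr_1 H$, off-diagonal blocks zero. Hence the identity holds.

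The only mildly delicate points are bookkeeping ones: keeping the cyclic (mod $n$) indexing consistent when one operator acts on the left and its conjugate transpose on the right, and correctly tracking that $(Y^j)^* = Y^{-j}$ contributes the conjugate phase $\overline{\omega^{jq}}$ so that the net phase depends only on $p-q$. I expect the main obstacle to be purely notational rather than conceptual — writing the blockwise action of $X^lY^j\otimes I_k$ cleanly enough that the double sum collapses transparently via the orthogonality relation $\sum_{j=1}^n\omega^{jm} = n\,[m\equiv 0]$. An alternative, essentially equivalent route would be to test the identity against arbitrary vectors $e_a\otimes u$, $e_b\otimes v$ and use $\langle e_a\otimes u, I_n\otimes(\tr_1 H)\, e_b\otimes v\rangle = \delta_{ab}\langle u,(\tr_1 H)v\rangle$ together with the definition \eqref{eqdef} of the partial trace; but the direct block computation above seems the most economical.
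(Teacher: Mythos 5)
Your computation is correct: the blockwise description of $X^lY^j\otimes I_k$ (block $(s+l,s)$ equal to $\omega^{js}I_k$, $\omega=e^{2\pi\sqrt{-1}/n}$), the resulting phase $\omega^{j(a-b)}$ on the $(a,b)$ block after conjugation, the collapse of the $j$-sum via $\sum_{j=1}^n\omega^{jm}=n\,[m\equiv 0 \pmod n]$, and the final $l$-sum giving $\sum_i H_{i,i}=\tr_1 H$ on each diagonal block all check out, including the normalization by $1/n$. Note, however, that the paper does not prove Lemma \ref{lem24} at all: it simply quotes the identity from the literature (eq.~(26) of Jen\v{c}ov\'{a}--Ruskai \cite{JR10}, or Lemma 2 of \cite{Bes13}). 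So your argument is not ``the same as the paper's'' in any literal sense; rather, you have supplied the self-contained verification that the paper outsources to its references, and it is essentially the standard Weyl--Heisenberg twirling computation one finds there. This is a worthwhile addition: it keeps the key identity elementary and transparent (a character-sum/discrete Fourier orthogonality argument on block indices), at the cost of a page of index bookkeeping, whereas the paper's citation keeps the exposition short. Your alternative suggestion of testing against vectors $e_a\otimes u$, $e_b\otimes v$ and invoking the defining adjunction \eqref{eqdef} would work equally well and would connect more directly to the paper's abstract definition of $\tr_1$, but the block computation you carried out is complete as it stands.
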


{\bf Remark.} 
The identity in this lemma  can yield 
an alternative proof of Lemma \ref{prop25}. 
Moreover, the analogous identity 
for $\tr_2 H$ can be seen in  \cite{{JR10}} or \cite[eq.(14)]{Ras2012}.

\medskip 

Now, we are ready to present an improvement on  inequality (\ref{eqkuaitr1}). 

\begin{theorem} \label{thm21}
Let $0\le \alpha < {\pi}/{2}$ and $H\in \mathbb{M}_n(\mathbb{M}_k)$ be such that $W(H)\subseteq S_{\alpha}$. Then 
\begin{equation*}
 \left| \frac{\det (\tr_1 H)}{n^k}\right|^n 
 \ge  (\cos \alpha)^{nk} |\det H|.  
\end{equation*}
\end{theorem}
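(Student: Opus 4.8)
The plan is to mimic the Fiedler--Markham--Lin argument for $\tr_1 H$, but carried out at the level of $\Re H$ so that the sector hypothesis enters only through Lemma \ref{lem22}. First I would apply Lemma \ref{lem22} to $H$ directly: since $W(H)\subseteq S_\alpha$, we have $|\det H|\le (\sec\alpha)^{nk}\det(\Re H)$, so it suffices to prove the corresponding inequality for the positive definite matrix $\Re H$, namely
\[
 \left(\frac{\det\bigl(\tr_1(\Re H)\bigr)}{n^k}\right)^n \ge \det(\Re H),
\]
using that $\Re(\tr_1 H)=\tr_1(\Re H)$. This reduces everything to the positive semidefinite case, which is exactly the second inequality of Theorem \ref{thmstrong} applied to $\Re H$.

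Next I would reprove that positive semidefinite inequality cleanly using Lemma \ref{lem24}, so the write-up is self-contained. Writing $K=\Re H\ge 0$ and $U_{l,j}=X^lY^j\otimes I_k$ (each unitary on $\mathbb{C}^{nk}$), Lemma \ref{lem24} gives
\[
 I_n\otimes(\tr_1 K)=\frac1n\sum_{l,j=1}^n U_{l,j}\,K\,U_{l,j}^{*}.
\]
Taking determinants of both sides: the left side is $\bigl(\det(\tr_1 K)\bigr)^n$ (determinant of a tensor product $I_n\otimes M$ with $M\in\mathbb{M}_k$ is $(\det M)^n$). For the right side, I would invoke the concavity of $M\mapsto(\det M)^{1/(nk)}$ on the positive semidefinite cone (the Fan--Ky inequality, the same tool quoted in \eqref{eqfk}), applied to the $n^2$ matrices $U_{l,j}KU_{l,j}^{*}$ averaged with equal weights $1/n^2$; since each $U_{l,j}KU_{l,j}^{*}$ has determinant $\det K$ (unitary conjugation), this yields
\[
 \det\!\left(\frac1{n^2}\sum_{l,j=1}^n U_{l,j}KU_{l,j}^{*}\right)\ge \det K.
\]
Rescaling by the scalar factor (the average with weight $1/n$ is $n$ times the average with weight $1/n^2$, contributing $n^{nk}$), one gets $\det\bigl(I_n\otimes(\tr_1 K)\bigr)=\det\bigl(\frac1n\sum U_{l,j}KU_{l,j}^{*}\bigr)\ge n^{nk}\det K$, i.e. $\bigl(\det(\tr_1 K)\bigr)^n\ge n^{nk}\det K$, which is the desired bound.

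Combining the two steps: $\bigl(\det(\tr_1 H)/n^k\bigr)^n$ equals $\bigl(\det(\Re(\tr_1 H))\bigr)^n/n^{nk}$ up to the sector distortion on $\tr_1 H$ — here I would be slightly careful and instead bound $|\det(\tr_1 H)|$ from below rather than above. Since $W(\tr_1 H)\subseteq S_\alpha$ by Lemma \ref{prop25}, Lemma \ref{lem22} gives only an \emph{upper} bound for $|\det(\tr_1 H)|$, which points the wrong way; so the right move is the one in the first paragraph, applying Lemma \ref{lem22} to $H$ itself (upper bound on $|\det H|$, which is what we want) and never distorting $\tr_1 H$ at all. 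Then $|\det H|\le(\sec\alpha)^{nk}\det(\Re H)\le(\sec\alpha)^{nk}\bigl(\det(\tr_1(\Re H))/n^k\bigr)^n=(\sec\alpha)^{nk}\bigl(\det(\Re(\tr_1 H))/n^k\bigr)^n\le(\sec\alpha)^{nk}\bigl(|\det(\tr_1 H)|/n^k\bigr)^n$, where the last step uses $\det(\Re M)\le|\det M|$ from Lemma \ref{lem23} applied to $M=\tr_1 H$ (valid since $\Re(\tr_1 H)$ is positive definite). Rearranging gives the claim. The main obstacle is bookkeeping of the constants and making sure every determinant inequality points in the correct direction — in particular resisting the temptation to apply Lemma \ref{lem22} to $\tr_1 H$, and correctly combining the two "cosine losses" (one from $H$ via Lemma \ref{lem22}, one absorbed freely via $\det\Re\le|\det|$ on $\tr_1 H$) so that only a single factor $(\cos\alpha)^{nk}$ survives.
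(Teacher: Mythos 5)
Your proposal is correct and is essentially the paper's own argument: the same ingredients (the Pauli-matrix identity of Lemma \ref{lem24}, the Fan--Ky inequality (\ref{eqfk}), Lemma \ref{lem22}, and the bound $\det\Re(\tr_1 H)\le|\det(\tr_1 H)|$ from Lemma \ref{lem23}) appear in the same roles, with all inequalities pointing the right way. The only difference is cosmetic: you apply Lemma \ref{lem22} once to $H$ and reduce to the positive semidefinite inequality for $\Re H$ (i.e.\ Theorem \ref{thmstrong}), whereas the paper applies it to each unitary conjugate inside one chained estimate -- these are the same computation.
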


\begin{proof}
Note that 
both $X$ and $Y$ in Lemma \ref{lem24} are unitary, 
so are $X^lY^j\otimes I_k$ for all $l,j$. 
Moreover, we have $\Re (UHU^*)=U(\Re H)U^*$ for every unitary $U$.  
Thus, 
\begin{eqnarray}
|\det H|  \!\!\!\!\!\! &= &  \!\!\!\!\!\!\!\! \notag  \prod_{l,j=1}^n 
\left| \det (X^lY^j\otimes I_k)H (X^lY^j\otimes I_k)^*\right|^{1/n^2} \\
&  \overset{\text{Lemma \ref{lem22}}}{\le}& \!\!\!\!\!\!\!\! \notag 
 (\sec \alpha )^{nk}  
\prod_{l,j=1}^n \left( \det (X^lY^j\otimes I_k)(\Re H) (X^lY^j\otimes I_k)^*\right)^{1/n^2} \\
& \overset{\text{Fan-Ky ineq.(\ref{eqfk})}}{\le} & \!\!\!\!\!\!\!  \notag
 (\sec \alpha )^{nk}  \det \Bigg( \frac{1}{n^2}
\sum_{l,j=1}^n   (X^lY^j\otimes I_k)(\Re H) (X^lY^j\otimes I_k)^*\Bigg)\\
& \overset{\text{Lemma \ref{lem24}}}{=} &  \!\!\!\!\!\!\!\!   \notag
(\sec \alpha )^{nk} \det \left( \frac{1}{n}\Bigl( I_n\otimes \tr_1(\Re H)\Bigr) \right) \\
&  = &  \!\!\!\!\!\!\!\!\! \frac{(\sec \alpha )^{nk}}{n^{nk}}
 \det \Bigl( I_n\otimes \tr_1(\Re H)\Bigr). \label{eqb1}
 \end{eqnarray} 
Clearly,  we have $\tr_1 (\Re H)=\Re (\tr_1 H)$. 
For $X  \in \mathbb{M}_n$ and $Y \in \mathbb{M}_k$, 
it is well-known that
 $\det (X \otimes Y) = (\det X)^k (\det Y)^n$; see, e.g., \cite[Chapter 2]{Zhan13}. 
It follows that 
\begin{equation*} 
\det \Bigl( I_n\otimes \tr_1(\Re H)\Bigr) 
= (\det I_n)^k \bigl( \det (\tr_1 \Re H) \bigr)^n  
=  \bigl( \det \Re (\tr_1 H) \bigr)^n.
\end{equation*}
By Proposition \ref{prop25}, we have $W( \tr_1 H)\subseteq S_{\alpha}$, 
which implies that $\Re (\tr_1 H)$ is positive definite. 
Therefore, by Lemma \ref{lem23}, we get 
\begin{equation*}
  \bigl( \det \Re (\tr_1 H) \bigr)^n 
\le    \bigl(|\det (\tr_1 H)|-|\det \Im (\tr_1 H)|\bigr)^n 
\le  |\det (\tr_1 H)|^n,   
\end{equation*}
which together with (\ref{eqb1})  yields the desired result. 
\end{proof}

{\bf Remark.} 
By applying the techniques from \cite{Li20}, 
we  know that  Kuai's inequality (\ref{eqkuaitr2})
can also  be deduced from the inequality in Theorem \ref{thm21}
 and vice versa.

\medskip 

In the sequel, we shall focus our attention on 
some recent results which are similar with 
the Fiedler--Markham inequality. 
Let ${ H}=[H_{i,j}]_{i,j=1}^n \in \mathbb{M}_n(\mathbb{M}_k)$ be 
a block matrix with $H_{i,j}=[ h_{l,m}^{i,j}]_{l,m=1}^k$. 
We define an $n\times n$ matrix $G_{l,m}$ as  below. 
\[ G_{l,m}:=\bigl[ h_{l,m}^{i,j}\bigr]_{i,j=1}^n\in \mathbb{M}_n .\] 
A direct computation yields 
\[ \tr_1 H =\sum_{i=1}^n H_{i,i}=\sum_{i=1}^n \bigl[ h_{l,m}^{i,i}\bigr]_{l,m=1}^k
=\left[ \begin{matrix} \sum\limits_{i=1}^n h_{l,m}^{i,i} \end{matrix}\right]_{l,m=1}^k 
=\bigl[ \tr \,G_{l,m}\bigr]_{l,m=1}^k. \] 
For notational convenience, we denote 
\[ \widetilde{H}= \bigl[ G_{l,m} \bigr]_{l,m=1}^k \in \mathbb{M}_k(\mathbb{M}_n).\]  
We can see that $\widetilde{H}$ is obtained 
from $H$ by rearranging the entries in an appropriate order. 
The above observation yields 
$\tr_1 H = \tr_2 \widetilde{H}$. 
Moreover, it is not hard to check that 
$\widetilde{H}$ and $H$ are unitarily similar; 
see, e.g., \cite[Theorem 7]{Choi17} or \cite[Theorem 4]{Li20}. 
Motivated by these relations, 
Choi \cite{Choi17}  introduced recently  the definition of partial determinants 
corresponding to partial traces. For $H=[H_{i,j}]_{i,j=1}^n\in \mathbb{M}_n(\mathbb{M}_k)$, 
the partial determinants are defined as 
  \[  \mathrm{det}_1H :=\bigl[ \det G_{l,m}\bigr]_{l,m=1}^k, \]
  and 
\[ \mathrm{det}_2 H :=\bigl[ \det H_{i,j}\bigr]_{i,j=1}^n.  \] 
To some extent, 
the partial determinants share some common properties relative to partial traces. 
For instance, 
it is easy to see that if ${ H} \in \mathbb{M}_n(\mathbb{M}_k)$ is positive semidefinite, then both $\mathrm{det}_1 H$ and $\mathrm{det}_2 H$ are positive semidefinite; see, e.g. \cite[p. 221]{Zhang11}.
Moreover, it was proved in \cite{Choi17} that 
\[  \mathrm{det} (\tr_1 H) \ge \tr (\mathrm{det}_2 H) ,\] 
and 
\[ \mathrm{det} (\tr_2 H) \ge \tr (\mathrm{det}_1 H).  \]
Additionally, Choi \cite{Choi17} proved 
two analogues of Theorem \ref{thmfm} and  Theorem \ref{thmlin2016} for partial determinants. 

\begin{theorem} \cite{Choi17}  \label{thmchoi11}
Let ${ H} \in \mathbb{M}_n(\mathbb{M}_k)$ be positive semidefinite. 
Then 
\begin{equation*} \label{eqchoi1}
 \left( \frac{\tr (\mathrm{det}_1 H)}{k}\right)^k \ge \det H,
\end{equation*}
and 
\begin{equation*} \label{eqchoi2}
 \left( \frac{\tr (\mathrm{det}_2 H)}{n}\right)^n \ge \det H. 
\end{equation*}
\end{theorem}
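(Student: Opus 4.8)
The plan is to prove the two inequalities in Theorem~\ref{thmchoi11} by reducing each of them to the strengthened Fiedler--Markham inequalities recorded in Theorem~\ref{thmstrong}, exploiting the rearrangement identity $\tr_1 H = \tr_2 \widetilde H$ together with the unitary similarity of $H$ and $\widetilde H$. First I would treat the second inequality, involving $\mathrm{det}_2 H = [\det H_{i,j}]_{i,j=1}^n$. The key observation is that $\mathrm{tr}(\mathrm{det}_2 H) = \sum_{i=1}^n \det H_{i,i}$ already appears as an intermediate quantity sandwiched by the superadditivity chain quoted in the paper: by Fischer's inequality (Lemma~\ref{lemfis}) $\det H \le \prod_{i=1}^n \det H_{i,i}$, and by the arithmetic--geometric mean inequality $\prod_{i=1}^n \det H_{i,i} \le \bigl(\frac{1}{n}\sum_{i=1}^n \det H_{i,i}\bigr)^n$. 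Chaining these two gives exactly $\det H \le \bigl(\tr(\mathrm{det}_2 H)/n\bigr)^n$, which is the second claimed inequality; no partial-trace machinery is even needed here.

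For the first inequality, involving $\mathrm{det}_1 H = [\det G_{l,m}]_{l,m=1}^k$, I would apply the same argument to the rearranged matrix $\widetilde H = [G_{l,m}]_{l,m=1}^k \in \mathbb{M}_k(\mathbb{M}_n)$. Since $H$ is positive semidefinite and $\widetilde H$ is unitarily similar to $H$ (as recalled in the excerpt, following \cite{Choi17} or \cite{Li20}), $\widetilde H$ is positive semidefinite as well and $\det \widetilde H = \det H$. Now $\widetilde H$ is a $k \times k$ block matrix with diagonal blocks $G_{l,l}$, so Fischer's inequality gives $\det H = \det \widetilde H \le \prod_{l=1}^k \det G_{l,l}$, and AM--GM gives $\prod_{l=1}^k \det G_{l,l} \le \bigl(\frac{1}{k}\sum_{l=1}^k \det G_{l,l}\bigr)^k = \bigl(\tr(\mathrm{det}_1 H)/k\bigr)^k$, where the last equality is just the definition $\mathrm{tr}(\mathrm{det}_1 H) = \sum_{l=1}^k \det G_{l,l}$. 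This yields the first inequality. If one prefers the sharper route mirroring Theorem~\ref{thmstrong}, one can instead invoke the log-concavity (Fan--Ky) inequality~\eqref{eqfk} applied to the diagonal blocks, but for the stated bounds plain AM--GM suffices.

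The only genuine subtlety --- and the step I would be most careful about --- is the positive semidefiniteness of $\widetilde H$ and the identity $\det \widetilde H = \det H$; everything else is a two-line application of Fischer plus AM--GM. Here I would lean on the unitary similarity $\widetilde H = P^* H P$ for a suitable permutation-type unitary $P$ (the ``flip'' exchanging the two tensor factors followed by the appropriate reindexing), which is exactly the content cited from \cite[Theorem~7]{Choi17}. Given that, $\det \widetilde H = |\det P|^2 \det H = \det H$ and $\widetilde H \ge 0$, so the reduction is legitimate. Thus the proof amounts to: (i) pass to $\widetilde H$ for the first inequality and stay with $H$ for the second; (ii) apply Lemma~\ref{lemfis}; (iii) apply AM--GM (or \eqref{eqfk}); (iv) recognize the resulting sum of block determinants as $\tr(\mathrm{det}_1 H)$, respectively $\tr(\mathrm{det}_2 H)$.
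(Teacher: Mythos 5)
Your proof is correct, and it is essentially the approach the paper itself uses: Theorem~\ref{thmchoi11} is only cited from \cite{Choi17}, but the paper's proof of its sector-matrix extension (Theorem~\ref{thm26}) is exactly your argument with the $(\sec\alpha)^{nk}$ factors inserted --- Fischer's inequality plus AM--GM for the $\mathrm{det}_2$ bound, then passing to the unitarily similar rearrangement $\widetilde H$ with $\mathrm{det}_1 H=\mathrm{det}_2\widetilde H$ for the $\mathrm{det}_1$ bound. Your specialization to the positive semidefinite case ($\alpha=0$) is sound as written.
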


Next, we will  
extend Theorem \ref{thmchoi11} to sector matrices. 
We write $|A|$ for the nonnegative matrix 
whose entries are the absolute of the entries of $A$. 
This notation is only used in the following theorem.

\begin{theorem}\label{thm26}
Let $0\le \alpha <{\pi}/{2}$ and 
$H\in \mathbb{M}_n(\mathbb{M}_k)$ be such that $W(H)\subseteq S_{\alpha}$. Then 
\begin{equation*} \label{eqext1}
  \left( \frac{\tr |\mathrm{det}_1 H|}{k}\right)^k 
  \ge (\cos \alpha )^{nk}  |\det H|, 
\end{equation*}
and 
\begin{equation*} \label{eqext2}
   \left( \frac{\tr |\mathrm{det}_2 H|}{n}\right)^n 
   \ge (\cos \alpha )^{nk} |\det H| . 
\end{equation*}
\end{theorem}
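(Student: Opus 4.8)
The plan is to mimic the argument used for Theorem \ref{thm21}, replacing Lemma \ref{lem24} by the analogous averaging identity for $\tr_2$ and exploiting the unitary similarity between $H$ and $\widetilde H$. For the second inequality, involving $\mathrm{det}_2 H$, I would first invoke the identity (the $\tr_2$ analogue of Lemma \ref{lem24}, referenced in the paper via \cite{JR10,Ras2012}): if $X,Y$ are the generalized Pauli matrices on $\mathbb{C}^k$, then $\frac{1}{k}\sum_{l,j=1}^k (I_n\otimes X^lY^j) H (I_n\otimes X^lY^j)^* = (\tr_2 H)\otimes I_k$. Since $I_n\otimes X^lY^j$ is unitary and $\Re(UHU^*)=U(\Re H)U^*$, the same chain of estimates as in Theorem \ref{thm21} gives
\[
|\det H| \le \frac{(\sec\alpha)^{nk}}{k^{nk}} \det\bigl((\Re(\tr_2 H))\otimes I_k\bigr) = \frac{(\sec\alpha)^{nk}}{k^{nk}} \bigl(\det \Re(\tr_2 H)\bigr)^k,
\]
using Lemma \ref{lem22}, the Fan--Ky inequality (\ref{eqfk}), and $\det(A\otimes I_k)=(\det A)^k$. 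By Proposition \ref{prop25}, $W(\tr_2 H)\subseteq S_\alpha$, so $\Re(\tr_2 H)$ is positive definite and Lemma \ref{lem23} applies.

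At this point I would \emph{not} bound $\det\Re(\tr_2 H)$ by $|\det(\tr_2 H)|$ as in Theorem \ref{thm21}; instead I want to bring in $\tr|\mathrm{det}_2 H|$. The key is that $\tr_2 H = [\tr H_{i,j}]_{i,j=1}^n$ is itself an $n\times n$ matrix, and I want to compare $\det\Re(\tr_2 H)$ with $\tr\,\mathrm{det}_2 H$. Here I would use the Hadamard-type inequality together with a per-entry estimate: in the positive semidefinite case Choi's proof compares $\det(\tr_2 H)$ with $\sum_i \prod$ of something; more precisely one shows $\det(\tr_2 H) \ge \tr(\mathrm{det}_1 H)$ type relations but what is actually needed is the reverse-type bound $\det(\Re \tr_2 H) \le$ a symmetric-function expression in the blocks that telescopes, after applying AM--GM, into $\bigl(\tfrac{1}{k}\tr|\mathrm{det}_1 H|\bigr)^k$. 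Concretely, for the \emph{first} inequality I would instead start from $\tr_1 H = \tr_2\widetilde H$ and $\widetilde H$ unitarily similar to $H$ (so $|\det\widetilde H|=|\det H|$ and $W(\widetilde H)\subseteq S_\alpha$), run the above argument on $\widetilde H$ to get $|\det H|\le \frac{(\sec\alpha)^{nk}}{k^{nk}}(\det\Re(\tr_1 H))^k$, wait --- the exponents differ. Let me restructure: for the first inequality apply the $\tr_2$-averaging identity directly to $\widetilde H\in\mathbb M_k(\mathbb M_n)$, noting $\tr_2\widetilde H=\tr_1 H=[\tr G_{l,m}]_{l,m=1}^k\in\mathbb M_k$, giving $|\det H|=|\det\widetilde H|\le\frac{(\sec\alpha)^{nk}}{k^{nk}}(\det\Re(\tr_1 H))^k$. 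Then I must pass from $\det\Re(\tr_1 H)$ to $\tfrac1{k}\tr|\mathrm{det}_1 H|$.

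The bridge in both cases is the same lemma, which I expect to be the main obstacle: for a sector matrix $M=[M_{l,m}]_{l,m=1}^k\in\mathbb M_k(\mathbb M_n)$ one needs
\[
\det\bigl(\Re[\tr M_{l,m}]_{l,m=1}^k\bigr) \le \Bigl(\tfrac{1}{k}\sum_{l=1}^k |\det M_{l,l}|\Bigr)^k .
\]
The route to this is: $[\tr M_{l,m}]$ has real part $[\Re\tr M_{l,m}]=[\tr\Re M_{l,m}]$, which is a $k\times k$ positive definite matrix with diagonal entries $\tr\Re M_{l,l}$; by Hadamard's inequality its determinant is at most $\prod_l \tr(\Re M_{l,l})$, and then by AM--GM $\prod_l\tr(\Re M_{l,l})\le\bigl(\tfrac1k\sum_l\tr\Re M_{l,l}\bigr)^k$. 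But this gives a bound in terms of traces, not determinants, so it is \emph{not} what we want. The correct path must instead use $\tr\Re M_{l,l}\le \det$-type control, i.e. for each diagonal block, since $W(M_{l,l})\subseteq S_\alpha$, one has $n\,|\det M_{l,l}|^{1/n}$ related to $\tr M_{l,l}$ by AM--GM on eigenvalues --- but sign/argument issues make $|\tr M_{l,l}|$ vs $\tr|\cdots|$ delicate. The cleanest fix, and the one I would pursue, is to observe that Choi's positive semidefinite inequality (Theorem \ref{thmchoi11}) already reads $\bigl(\tfrac1k\tr(\mathrm{det}_1 H)\bigr)^k\ge\det H$; applying it to $\Re H\ge 0$ (which is positive definite and whose blocks $\Re$-restrict appropriately: $(\Re H)$ has $\widetilde{\Re H}$ with blocks $\Re G_{l,m}$) gives $\bigl(\tfrac1k\tr\,\mathrm{det}_1(\Re H)\bigr)^k\ge\det\Re H$, and then combining $\det H$ vs $\det\Re H$ via Lemma \ref{lem22} plus a per-block comparison $\det\Re G_{l,m}$ (on the diagonal, $l=m$) $\le|\det G_{l,m}|$ from Lemma \ref{lem23}, together with $|\mathrm{det}_1 H|\ge$ entrywise $|\det G_{l,l}|$ on the diagonal, closes the estimate. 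So the real plan is: (i) reduce to $\Re H$ via Lemma \ref{lem22}; (ii) apply Theorem \ref{thmchoi11} to $\Re H$; (iii) on each diagonal block use $\det\Re G_{l,l}\le|\det G_{l,l}|$ (Lemma \ref{lem23}), noting only diagonal blocks enter $\tr$. The symmetric argument with $\mathrm{det}_2$ and the $H_{i,i}$ handles the second inequality. The delicate point --- the obstacle --- is checking that the constant $(\cos\alpha)^{nk}$ comes out exactly, which forces us to apply Lemma \ref{lem22} to $H$ (size $nk$) once, not to each block, and to be careful that Theorem \ref{thmchoi11} applied to $\Re H$ introduces no further loss.
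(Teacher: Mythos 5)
Your final plan (i)--(iii) is correct and yields exactly the stated constant: Lemma \ref{lem22} applied once to the full $nk\times nk$ matrix gives $|\det H|\le(\sec\alpha)^{nk}\det\Re H$; Theorem \ref{thmchoi11} applied to the positive definite matrix $\Re H$ gives $\det\Re H\le\bigl(\tfrac1k\sum_{l}\det\Re G_{l,l}\bigr)^{k}$ for the first inequality and $\det\Re H\le\bigl(\tfrac1n\sum_{i}\det\Re H_{i,i}\bigr)^{n}$ for the second; and Lemma \ref{lem23} on each diagonal block gives $\det\Re G_{l,l}\le|\det G_{l,l}|$ and $\det\Re H_{i,i}\le|\det H_{i,i}|$ --- here you should add the one-line justification that $\Re G_{l,l}$ is positive definite, which holds because $G_{l,l}$ is a principal submatrix of $H$ (equivalently a diagonal block of $\widetilde H$). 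This is in substance the paper's proof with the ingredients reshuffled: the paper proves the $\mathrm{det}_2$ inequality directly (Lemma \ref{lem22}, then Fischer's inequality (Lemma \ref{lemfis}), then Lemma \ref{lem23} blockwise, then AM--GM) and obtains the $\mathrm{det}_1$ inequality by applying that result to $\widetilde H$, which is unitarily similar to $H$ and hence has the same determinant and the same sector; you instead quote Choi's positive semidefinite inequality (which is precisely Fischer plus AM--GM, together with the $\widetilde{(\cdot)}$ rearrangement for the $\mathrm{det}_1$ case) as a black box on $\Re H$ and postpone the absolute-value step to the diagonal blocks --- either order works, and no constant is lost since Lemma \ref{lem22} is invoked only once at size $nk$, exactly the point you flagged as delicate. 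Two caveats on the write-up: the entire first half (the Pauli-averaging identity aimed at $\det(\tr_2 H)$ and $\det(\tr_1 H)$) targets the wrong quantity --- that is the content of Theorem \ref{thm21} and Kuai's inequalities, not the partial determinants --- and should simply be deleted, as you in effect do; and your parenthetical claim that $\widetilde{\Re H}$ has blocks $\Re G_{l,m}$ is true only for $l=m$ (the $(l,m)$ block is $\tfrac12(G_{l,m}+G_{m,l}^{*})$), which is harmless here because only the diagonal blocks enter $\tr\,\mathrm{det}_1$.
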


\begin{proof}
First of all, we shall prove the second inequality. 
We observe that 
$\Re H_{1,1}$,  $\ldots ,\Re H_{n,n}$ 
are the diagonal block matrices of $\Re H$. 
By Lemma \ref{lem22} and Lemma \ref{lemfis}, we obtain  
\begin{align*}
|\det H| &\le (\sec \alpha)^{nk} \det (\Re H) 
 \le (\sec \alpha)^{nk} \prod_{i=1}^n \det (\Re H_{i,i})  \\
&\le (\sec \alpha)^{nk} \prod_{i=1}^n |\det H_{i,i}| 
\le (\sec \alpha)^{nk} \left( \frac{1}{n} \sum_{i=1}^n |\det H_{i,i}|\right)^n,  
\end{align*}
where the third inequality follows from Lemma \ref{lem23} 
and the last one follows from the arithmetic mean-geometric mean inequality. 

We now prove the first desired inequality
 by employing the relations between $\mathrm{det}_1$ and $\mathrm{det}_2$. 
Recall that $\widetilde{H}=[G_{l,m} ]_{l,m=1}^k\in \mathbb{M}_k(\mathbb{M}_n)$ and  $\mathrm{det}_1 H=\mathrm{det}_2 \widetilde{H}$. 
Since $\widetilde{H}$ and $H$ are unitarily similar,  
we can get $\det \widetilde{H}=\det H$ and $W(\widetilde{H})\subseteq S_{\alpha}$. 
Moreover, $\widetilde{H}$   is also positive semidefinite. 
By applying the second inequality to $\widetilde{H}$, 
we  get 
\[  \left( \frac{\tr |\mathrm{det}_1 H|}{k}\right)^k
= \left( \frac{\tr |\mathrm{det}_2 \widetilde{H}|}{k}\right)^k 
\ge (\cos \alpha )^{kn}  |\det \widetilde{H}| 
= (\cos \alpha )^{kn} |\det H|.  \]
This completes the proof. 
\end{proof}

\section{Extensions on Ando's inequality} 
\label{sec4}

To make our statements more transparent and compatible 
with the previous works in the literature. In this section, 
we assume that 
$A$ is an $m\times m$ block matrix with each block being 
an $n \times n$ matrix.  
Let ${ A}=[A_{i,j}]_{i,j=1}^m \in \mathbb{M}_m(\mathbb{M}_n)$ be positive semidefinite.  
We know that  both $\tr_1 A$ and $\tr_2 A$ 
are positive semidefinite; see, e.g., \cite[p. 237]{Zhang11} 
and \cite[Theorem 2.1]{Zha12}. 
To some degree, 
these two partial traces are closely related and 
mutually affect each other.  
We write $\lVert A \lVert_q = 
\left( \sum_{i} \sigma_i (A)^q \right)^{1/q}$ 
for the Schatten $q$-norm of $A$. 
In 2007,  Audenaert \cite{Aud07} proved the 
following norm inequality, 
\begin{equation} \label{eqaud}
\tr \,A + \lVert A \lVert_q \ge \lVert \tr_1 A \rVert_q + \lVert \tr_2 A \rVert_q .
\end{equation}
A straightforward argument exploiting   Audenaert's result 
leads to a proof of the subadditivity of $q$-entropies (Tsallis entropies)  
 for finite-dimensional bipartite quantum states; see \cite{Aud07,Bes13} and references therein. 
In 2014, Ando  \cite{Ando14}  
(or see \cite[Proposition 2.2]{Lin16b} for an alternative proof)
established  the following remarkable inequality 
in the sense of  the L\"{o}wner ordering. 

\begin{theorem} \cite{Ando14,Lin16b} \label{thmando}
Let $A \in \mathbb{M}_m(\mathbb{M}_n)$ be positive semidefinite. Then  
\begin{equation*} 
(\tr A)I_{mn}+  A \ge  
I_m\otimes (\mathrm{tr}_1 A)  + (\mathrm{tr}_2 A) \otimes I_n.
\end{equation*}
\end{theorem}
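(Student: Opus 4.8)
The plan is to reduce the L\"{o}wner inequality to a scalar inequality about the eigenvalues of $A$, using the unitary-conjugation trick that already appears (via generalized Pauli matrices) in Lemma \ref{lem24}. First I would recall the two averaging identities: with $X,Y$ the generalized Pauli matrices on $\mathbb{C}^m$ one has $\frac1m\sum_{l,j}(X^lY^j\otimes I_n)A(X^lY^j\otimes I_n)^* = I_m\otimes(\tr_1 A)$, and the analogous identity on the second tensor factor (using Pauli matrices on $\mathbb{C}^n$) gives $\frac1n\sum_{l,j}(I_m\otimes X'^lY'^j)A(I_m\otimes X'^lY'^j)^* = (\tr_2 A)\otimes I_n$. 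Adding these, $I_m\otimes(\tr_1 A) + (\tr_2 A)\otimes I_n$ is a convex-type combination of unitary conjugates of $A$; more precisely it equals $\mathbb{E}_U\,UAU^*$ over a suitable finite group of unitaries $U$ acting on $\mathbb{C}^{mn}$. Since $\tr A = \tr(UAU^*)$ is constant along the orbit, it suffices to show $(\tr A)I_{mn} + A \ge UAU^*$ for every unitary $U$, and then average.

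Next I would prove that pointwise bound. Write the spectral decomposition $A = \sum_k \lambda_k P_k$ with $\lambda_k \ge 0$ the eigenvalues. For any unit vector $v$, $v^*UAU^*v = \sum_k \lambda_k |\langle U^*v, \text{(eigvec}_k)\rangle|^2 \le \max_k \lambda_k \le \sum_k \lambda_k = \tr A$, hence $UAU^* \le (\tr A)I_{mn}$, so a fortiori $(\tr A)I_{mn} + A \ge (\tr A)I_{mn} \ge UAU^*$. Averaging over the group of Pauli-type unitaries then yields $(\tr A)I_{mn}+A \ge I_m\otimes(\tr_1 A) + (\tr_2 A)\otimes I_n$. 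Actually one should double-check the combinatorics of the averaging: the two partial-trace identities live on different tensor factors, and one wants a single probability measure on unitaries whose barycenter is exactly $\frac12\big(I_m\otimes(\tr_1 A)\big)+\frac12\big((\tr_2 A)\otimes I_n\big)$ or, after rescaling, $I_m\otimes(\tr_1 A)+(\tr_2 A)\otimes I_n$ itself; this forces the constant on the left to be $\tr A$ rather than $2\tr A$, and verifying that the bookkeeping produces precisely the stated constant is the one place requiring care.

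The step I expect to be the main obstacle is matching the scaling so that the coefficient of $I_{mn}$ comes out as $\tr A$ and not something larger. The crude bound $UAU^* \le (\tr A)I$ is wasteful; to recover the sharp constant one instead writes $I_m\otimes(\tr_1 A)+(\tr_2 A)\otimes I_n$ as an average of conjugates $U_s A U_s^*$ indexed by a set $\mathcal S$ and needs $\frac{1}{|\mathcal S|}\sum_{s}U_sAU_s^* \le (\tr A)I$, which by convexity again reduces to $A\le (\tr A)I$ — true since $\lambda_{\max}(A)\le\tr A$ for $A\ge 0$. So the real content is organizing the Pauli averages on the two factors into one average of the right cardinality; once that is set up, positivity of $A$ does the rest. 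An alternative, which I would mention as a fallback, is Ando's original route or Lin's in \cite{Lin16b}: test the inequality against rank-one projections and reduce to the $2\times 2$ block case plus the elementary fact that for $A\ge0$, $\tr A \ge \lambda_1(A)$, combined with an interlacing/pinching argument; but the Pauli-averaging proof above is the cleanest and fits the toolkit already assembled in the paper.
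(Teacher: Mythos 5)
There is a genuine gap, and it sits exactly where you flagged it: the reduction ``it suffices to show $(\tr A)I_{mn}+A\ge UAU^*$ for every unitary $U$, and then average'' presupposes that $I_m\otimes(\tr_1 A)+(\tr_2 A)\otimes I_n$ is a \emph{barycenter} (a weight-one average) of unitary conjugates of $A$, and that is impossible. Taking traces shows why: any convex combination $\sum_s w_s U_sAU_s^*$ with $\sum_s w_s=1$ has trace $\tr A$, whereas $\tr\bigl(I_m\otimes(\tr_1 A)+(\tr_2 A)\otimes I_n\bigr)=(m+n)\tr A$. What the Pauli identities (Lemma \ref{lem24} and its $\tr_2$-analogue) actually give is a positive combination of conjugates of total weight $m+n$: the average of the $m^2$ conjugates $(X^lY^j\otimes I_n)A(X^lY^j\otimes I_n)^*$ equals $\tfrac1m I_m\otimes(\tr_1 A)$, not $I_m\otimes(\tr_1 A)$, and similarly on the other factor. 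Feeding the crude bound $U_sAU_s^*\le(\tr A)I_{mn}$ into such a combination only yields $I_m\otimes(\tr_1 A)+(\tr_2 A)\otimes I_n\le (m+n)(\tr A)I_{mn}$, which is far weaker than the theorem; no reorganization of the Pauli averages can fix this, because the trace obstruction above is invariant under any such bookkeeping. So the step you identified as ``requiring care'' is not a scaling detail to be checked but the point where the argument breaks: the inequality is not a consequence of $\lambda_{\max}(A)\le\tr A$ applied along a unitary orbit.

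The real content of Theorem \ref{thmando} lies in the partial-transpose (copositivity) structure, which your proposal never uses. The paper's proof invokes Lemma \ref{thmchoi} (Choi): for $A\ge0$ one has $(\tr_2 A^{\tau})\otimes I_n\ge\pm A^{\tau}$ and $I_m\otimes\tr_1 A^{\tau}\ge\pm A^{\tau}$, i.e.\ the maps $\Phi_2^-(X)=(\tr_2 X^{\tau})\otimes I_n-X^{\tau}$ and $\Phi_1^-(X)=I_m\otimes\tr_1 X^{\tau}-X^{\tau}$ are positive. Applying $\Phi_1^-$ to the positive semidefinite matrix $\Phi_2^-(A)$ and simplifying, using ${\Phi_2^-(A)}^{\tau}=(\tr_2 A)\otimes I_n-A$ and $\tr_1\bigl((\tr_2 A)\otimes I_n-A\bigr)=(\tr A)I_n-\tr_1 A$, gives exactly $(\tr A)I_{mn}+A\ge I_m\otimes(\tr_1 A)+(\tr_2 A)\otimes I_n$. (Equivalently, Ando's original route rests on the $2$-copositivity of $X\mapsto(\tr X)I-X$.) Your fallback sentence gestures at this but supplies no argument, so as written the proposal does not prove the theorem; if you want to salvage your write-up, replace the averaging scheme by the composition $\Phi_1^-\circ\Phi_2^-$ applied to $A$, which is short and uses only Lemma \ref{thmchoi}.
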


Ando's result  reveals closely the  interplay between the first and second partial trace. 
Equivalently, this inequality can be rewritten as 
\begin{equation} \label{eqando}
(\tr A)I_{mn} - (\mathrm{tr}_2 A) \otimes I_n   \ge  
I_m\otimes (\mathrm{tr}_1 A)  -A .
\end{equation}
We observe that the positivity of  $A$, 
together with the identity 
$\tr \, A =\sum_{i=1}^m \tr A_{i,i} =
 \tr (\tr_2 A)$, 
 leads to   
$ (\tr A)I_m \ge \lambda_{\max}(\tr_2 A)I_m \ge  \tr_2 A$,
 which guarantees that in (\ref{eqando}) the left hand side 
$(\tr A)I_{mn} - (\mathrm{tr}_2 A) \otimes I_n $  is positive semidefinite. 
However, the two matrices of the 
right hand side in (\ref{eqando}) might be incomparable. 
For instance, the matrix $A$ in (\ref{eqeq1}) gives an example. 
Motivated by this observation, 
Li, Liu and Huang \cite{HuangLi20}  presented 
a further generalization. 

\begin{theorem} \cite{HuangLi20}  \label{thmllh}
Let $A \in \mathbb{M}_m(\mathbb{M}_n)$ be positive semidefinite. Then  
\begin{equation*} \label{eqhuang}
(\tr A)I_{mn} -(\tr_2A)\otimes I_n \ge A - I_m \otimes (\tr_1 A),  
\end{equation*}
and 
\begin{equation*} \label{eqhuang2}
(\tr A)I_{mn}  + (\tr_2A)\otimes I_n \ge  A +  I_m \otimes (\tr_1 A).  
\end{equation*}
\end{theorem}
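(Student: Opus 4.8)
The plan is to reduce both inequalities to Ando's inequality (Theorem 4.3) applied not to $A$ itself but to a cleverly chosen companion matrix, exploiting the fact that a positive semidefinite $2\times 2$ (and more generally $m\times m$) block matrix stays positive semidefinite under the ``diagonal-unitary conjugation'' trick. Concretely, for the first inequality I would introduce the block-diagonal unitary $U=\mathrm{diag}(\omega^1 I_n,\omega^2 I_n,\ldots,\omega^m I_n)$ with $\omega=e^{2\pi i/m}$ (equivalently $U=D\otimes I_n$ where $D$ is the diagonal matrix of $m$-th roots of unity), or more directly the Pauli-type shift/clock operators $X,Y$ already used in Lemma 3.6. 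Averaging $A$ over a suitable subgroup of such conjugations annihilates the off-diagonal blocks while fixing $\tr_1 A$, $\tr_2 A$ and $\tr A$; I would then feed the resulting averaged matrix (or the signed combination $\tfrac12(A + UAU^{*})$ type object) into Ando's inequality and unwind. The point is that the ``extra'' term $A$ on one side of Ando's inequality, after averaging, becomes $I_m\otimes(\tr_1 A)$ or a diagonal object, producing exactly the cross-term $(\tr_2 A)\otimes I_n$ or $I_m\otimes(\tr_1 A)$ that we want on the stronger side.

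The cleaner route, which I would actually carry out, is the following. For the \emph{second} inequality, note $(\tr A)I_{mn}+(\tr_2 A)\otimes I_n \ge A + I_m\otimes(\tr_1 A)$ is equivalent, after subtracting Ando's inequality $(\tr A)I_{mn}+A\ge I_m\otimes(\tr_1 A)+(\tr_2 A)\otimes I_n$ from $2(\tr A)I_{mn}+2(\tr_2 A)\otimes I_n$... more transparently, it suffices to show
\[
(\tr A)I_{mn} + (\tr_2 A)\otimes I_n - A - I_m\otimes(\tr_1 A)\ge 0,
\]
and I would prove this by applying Ando's inequality to the matrix $\widehat{A}:=(I_m\otimes J)A(I_m\otimes J)$ or to a block matrix obtained from $A$ by conjugating the $(i,j)$ block by a sign, so that $\tr_1$ and $\tr_2$ and $\tr$ are preserved but $A$ is replaced by a matrix $A'$ with $A'\le (\text{something diagonal})$. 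An even slicker approach: apply Ando's inequality to $B:=\begin{bmatrix}A & 0\\ 0 & A\end{bmatrix}$-type doublings won't change the structure; instead observe that $-A \le$ is governed by $\lVert A\rVert$, but we want the \emph{matrix} statement. I would therefore lean on the averaging identity of Lemma 3.6 (and its $\tr_2$ analogue cited in the remark): averaging $A$ over the clock operators gives $\tfrac1m\sum_j (Y^j\otimes I_n)A(Y^j\otimes I_n)^{*}=$ the block-diagonal part of $A$, call it $A_{\mathrm{bd}}$, and $\tr_2 A_{\mathrm{bd}}$ is diagonal; one then plays $A_{\mathrm{bd}}$ against $A$.

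The key steps, in order: (1) record the two auxiliary positive semidefinite matrices built from $A$ by diagonal-unitary averaging, and verify that partial traces and the full trace are invariant under these operations while the block structure is flattened; (2) apply Ando's inequality (Theorem 4.3) to each auxiliary matrix; (3) add or subtract the resulting operator inequalities from Ando's inequality for $A$ itself, using that $I_m\otimes(\tr_1 A)$ and $(\tr_2 A)\otimes I_n$ commute with the averaging unitaries, to cancel the unwanted terms; (4) read off the two claimed inequalities. The main obstacle I anticipate is step (3): matching the \emph{signs} and the scalar normalizations so that the averaged copies of $A$ combine to exactly $I_m\otimes(\tr_1 A)$ (resp.\ $(\tr_2 A)\otimes I_n$) rather than to a scalar multiple thereof, and making sure no spurious positive-semidefinite slack is discarded in the wrong direction — i.e.\ confirming that the telescoping of Ando-type inequalities is tight enough to yield these strengthenings and does not merely reprove Theorem 4.3. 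If the averaging telescoping turns out to lose a factor, the fallback is a direct spectral argument: diagonalize $\tr_2 A$, reduce to the case where $\tr_2 A$ is diagonal by a conjugation $V\otimes I_n$ (which preserves positivity and commutes appropriately with $I_m\otimes\tr_1 A$), and then verify the resulting inequality blockwise using $(\tr A)I_n - (\tr_2 A)_{ii}I_n \ge \sum_{j\ne i}A_{jj}\ge 0$ together with Ando applied to the $2\times 2$ principal compressions.
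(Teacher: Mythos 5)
There is a genuine gap: none of the routes you sketch can reach the stated inequalities, because every one of them tries to manufacture Theorem \ref{thmllh} out of Theorem \ref{thmando} by unitary averaging, and that mechanism cannot move $A$ onto the dominated side. In both claimed inequalities the full matrix $A$, off-diagonal blocks included, sits on the smaller side, whereas in Ando's inequality $A$ sits on the larger side. Applying Ando to conjugates $(U\otimes I_n)A(U\otimes I_n)^*$ and averaging only ever produces inequalities whose larger side contains averages of conjugates of $A$ and whose smaller side is built from $\tr_1 A$ and conjugates of $\tr_2 A$: averaging over the clock operators $Y^j\otimes I_n$ gives $(\tr A)I_{mn}+A_{\mathrm{bd}}\ge I_m\otimes(\tr_1 A)+\mathrm{diag}(\tr A_{1,1},\ldots,\tr A_{m,m})\otimes I_n$ with $A_{\mathrm{bd}}$ the block-diagonal part of $A$, and averaging over the whole clock-and-shift group of Lemma \ref{lem24} collapses Ando to the trivial $(\tr A)I_{mn}\ge I_m\otimes(\tr_1 A)$. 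Moreover, the ``subtract'' in your step (3) is not a legitimate operation on operator inequalities ($X\ge Y$ and $Z\ge W$ say nothing about $X-Z$ versus $Y-W$), so the telescoping you hope for either reproves Theorem \ref{thmando} or proves nothing --- exactly the worry you flag yourself. The fallback fares no better: reducing to diagonal $\tr_2 A$ by a $V\otimes I_n$ conjugation is fine, but the difference matrix $(\tr A)I_{mn}+(\tr_2A)\otimes I_n-A-I_m\otimes(\tr_1A)$ still has off-diagonal blocks $-A_{i,j}$, so checking diagonal blocks ``blockwise'' establishes nothing, and summing the $2\times2$ principal block compressions over all pairs overshoots the target by a positive semidefinite block-diagonal term sitting on the wrong side, hence yields only a weaker inequality.

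The missing idea is the partial transpose, i.e.\ a copositivity statement, which is what actually permits $\pm A$ (rather than $+A$) to lie below a partial-trace term: Lemma \ref{thmchoi} gives $(\tr_2A^{\tau})\otimes I_n\ge\pm A^{\tau}$ and $I_m\otimes(\tr_1A^{\tau})\ge\pm A^{\tau}$ for $A\ge0$ (equivalently, the $2$-copositivity of $X\mapsto(\tr X)I\pm X$, which is how Li, Liu and Huang originally argue). The paper's proof simply composes the induced positive maps $\Phi_1^{\pm}(X)=I_m\otimes\tr_1X^{\tau}\pm X^{\tau}$ and $\Phi_2^{\pm}(X)=(\tr_2X^{\tau})\otimes I_n\pm X^{\tau}$: since $(\Phi_2^{-}(A))^{\tau}=(\tr_2A)\otimes I_n-A$ and $\tr_1\bigl((\Phi_2^{-}(A))^{\tau}\bigr)=(\tr A)I_n-\tr_1A$, the positivity of $\Phi_1^{+}(\Phi_2^{-}(A))$ unwinds precisely to the second inequality, and $\Phi_1^{-}(\Phi_2^{+}(A))\ge 0$ gives the first. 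Nothing in your toolkit (Ando plus unitary averaging) supplies a bound of the form ``partial-trace term $\ge\pm A^{\tau}$'', so to complete a proof you would have to import Lemma \ref{thmchoi} (or the copositivity of $(\tr X)I\pm X$), at which point the averaging scaffolding becomes unnecessary.
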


A  map (not necessarily linear) $\Phi: \mathbb{M}_n\to \mathbb{M}_k$ is called positive 
if it maps positive semidefinite matrices 
to positive semidefinite matrices. 
A  map $\Phi: \mathbb{M}_n\to \mathbb{M}_k$  is said to be {\it $m$-positive} if 
for every $m\times m$ block matrix 
$[A_{i,j}]_{i,j=1}^m\in \mathbb{M}_m(\mathbb{M}_n)$,  
\begin{equation*}  \label{eq1}
[A_{i,j}]_{i,j=1}^m \ge 0 \Rightarrow [\Phi (A_{i,j})]_{i,j=1}^m\ge 0. 
\end{equation*}
Clearly, being $1$-positive is equivalent to
being positive. 
The   map $\Phi$ is said to be {\it completely positive} 
if it is $m$-positive for every  integer $m\ge 1$. 
It is well-known that both the trace map and  determinant map 
are  completely positive; see, e.g., \cite[p. 221, p. 237]{Zhang11} 
or \cite{Zha12}. 
On the other hand, a  map  $\Phi $  is said to be {\it $m$-copositive} if 
for every $[A_{i,j}]_{i,j=1}^m\in \mathbb{M}_m(\mathbb{M}_n)$, 
\begin{equation*}  \label{eq2}
[A_{i,j}]_{i,j=1}^m \ge 0 \Rightarrow [\Phi (A_{j,i})]_{i,j=1}^m\ge 0,  
\end{equation*}
and $\Phi$ is said to be {\it completely copositive} 
if it is $m$-copositive for every  integer $m\ge 1$. 
Furthermore, 
a map $\Phi$ is called {\it completely PPT} if it is both completely positive and completely copositive; 
see \cite{Lin14, FLTmia, Zhang19} for related topics. 

Both Theorem \ref{thmando} 
and Theorem \ref{thmllh}
illustrated the implicit interaction and connection between the first trace and second trace.  
The proof of Theorem \ref{thmando} depends mainly on 
the 2-copositivity of $\Psi (X)=(\tr X)I-X$; see e.g., \cite{Ando14} and \cite{Lin16b} for more details. 
Correspondingly, the proof  of Theorem \ref{thmllh}  relies similarly  on 
the 2-copositivity of $\Phi (X)=(\tr X)I+X$; 
see \cite{HuangLi20}. 
For more application of these two maps, 
we refer readers to papers \cite{Lin14, Li20laa}. 

In this section, 
we give a unified treatment of both Theorem \ref{thmando} 
and Theorem \ref{thmllh}.  
Our treatment is more concise than the original proof.  
We need to use a recent result of Choi \cite{Choi17,Choi18}, 
which  investigates more relations between 
the partial traces and 
the partial transpose.

\begin{lemma}  \cite{Choi17,Choi18}  \label{thmchoi}
Let $A\in \mathbb{M}_m(\mathbb{M}_n)$ be positive semidefinite. Then  
\begin{equation*} \label{eqchoi2}
 \text{$(\tr_2 A^{\tau})\otimes I_n\ge \pm A^{\tau}$,} 
\end{equation*}
and
\begin{equation*} \label{eqchoi1}
I_m\otimes \tr_1 A^{\tau} \ge \pm A^{\tau}.
\end{equation*}
\end{lemma}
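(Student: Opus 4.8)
The plan is to reduce each of the four inequalities --- the two signs in each of the two displays --- to the trivial remark that a congruence $C^{*}MC$ of a positive semidefinite matrix $M$ is again positive semidefinite. Write $A=[A_{i,j}]_{i,j=1}^{m}\in\mathbb{M}_{m}(\mathbb{M}_{n})$ with $A\ge 0$, and record two facts used throughout. First, $A^{\tau}$ and $A$ share the same diagonal blocks, so $\tr_{1}A^{\tau}=\sum_{i=1}^{m}A_{i,i}=\tr_{1}A$. Second, since $A\ge 0$, the full transpose $A^{T}=[A_{j,i}^{T}]_{i,j=1}^{m}$ is positive semidefinite, and so is every $2\times 2$ principal submatrix $P_{i,j}:=\left[\begin{smallmatrix}A_{i,i}&A_{i,j}\\ A_{j,i}&A_{j,j}\end{smallmatrix}\right]$ with $i\ne j$.

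For the inequalities $I_{m}\otimes(\tr_{1}A^{\tau})\ge\pm A^{\tau}$ I would pair up the outer indices. Put $W=\left[\begin{smallmatrix}0&-I_{n}\\ I_{n}&0\end{smallmatrix}\right]$ and $V=\left[\begin{smallmatrix}0&I_{n}\\ I_{n}&0\end{smallmatrix}\right]$, both unitary, and for a $2\times 2$ block matrix $M$ and indices $i<j$ let $M^{\langle i,j\rangle}\in\mathbb{M}_{m}(\mathbb{M}_{n})$ be the matrix carrying the four blocks of $M$ in positions $(i,i),(i,j),(j,i),(j,j)$ and zeros elsewhere. Expanding the block entries one checks
\[
I_{m}\otimes(\tr_{1}A)-A^{\tau}=\sum_{1\le i<j\le m}\bigl(W^{*}P_{i,j}W\bigr)^{\langle i,j\rangle},\qquad I_{m}\otimes(\tr_{1}A)+A^{\tau}=2\bigoplus_{i=1}^{m}A_{i,i}+\sum_{1\le i<j\le m}\bigl(V^{*}P_{i,j}V\bigr)^{\langle i,j\rangle}.
\]
Since $W^{*}P_{i,j}W=\left[\begin{smallmatrix}A_{j,j}&-A_{j,i}\\ -A_{i,j}&A_{i,i}\end{smallmatrix}\right]$ and $V^{*}P_{i,j}V=\left[\begin{smallmatrix}A_{j,j}&A_{j,i}\\ A_{i,j}&A_{i,i}\end{smallmatrix}\right]$ are positive semidefinite, both right-hand sides are sums of positive semidefinite matrices; together with $\tr_{1}A^{\tau}=\tr_{1}A$ this yields the two inequalities.

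For the inequalities $(\tr_{2}A^{\tau})\otimes I_{n}\ge\pm A^{\tau}$, observe that the $(i,j)$ block of $(\tr_{2}A^{\tau})\otimes I_{n}\pm A^{\tau}$ equals $(\tr A_{j,i})I_{n}\pm A_{j,i}$, so it suffices to show that $X\mapsto(\tr X)I_{n}-X$ and $X\mapsto(\tr X)I_{n}+X$ are completely copositive on $\mathbb{M}_{n}$. I would establish this through the Kraus-type identities
\[
(\tr X)I_{n}-X=\sum_{1\le k<l\le n}F_{k,l}\,X^{T}F_{k,l}^{*},\qquad (\tr X)I_{n}+X=\sum_{1\le k<l\le n}G_{k,l}\,X^{T}G_{k,l}^{*}+2\sum_{k=1}^{n}E_{k,k}\,X^{T}E_{k,k}^{*},
\]
where $F_{k,l}=e_{k}e_{l}^{*}-e_{l}e_{k}^{*}$, $G_{k,l}=e_{k}e_{l}^{*}+e_{l}e_{k}^{*}$ and $E_{k,k}=e_{k}e_{k}^{*}$, each checked by comparing entries of the two sides. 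Since the $(i,j)$ block of $A^{T}$ is $A_{j,i}^{T}$, substituting $X=A_{j,i}$ and collecting the blocks turns $\bigl[(\tr A_{j,i})I_{n}-A_{j,i}\bigr]_{i,j=1}^{m}$ into $\sum_{1\le k<l\le n}(I_{m}\otimes F_{k,l})\,A^{T}\,(I_{m}\otimes F_{k,l})^{*}$, a sum of congruences of $A^{T}\ge 0$, hence positive semidefinite; the $+$ case is identical with $G_{k,l}$, $E_{k,k}$ replacing $F_{k,l}$.

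The only genuinely delicate point is the bookkeeping in these four identities: one must track how the diagonal blocks accumulate --- this is the source of the correction terms $2\bigoplus_{i}A_{i,i}$ and $2\sum_{k}E_{k,k}X^{T}E_{k,k}^{*}$ in the ``$+$'' versions --- and verify that every off-diagonal block carries the right sign. Selecting the correct conjugating object for each sign ($W$ versus $V$; $F_{k,l}$ versus $G_{k,l}$) is the one real idea; once the identities are in place, positivity is automatic. For the $\tr_{2}$ part one may alternatively just invoke the known complete copositivity of the reduction map $X\mapsto(\tr X)I_{n}-X$, the map $X\mapsto(\tr X)I_{n}+X$ being handled the same way; deriving the $\tr_{1}$ inequalities from the $\tr_{2}$ ones via the reshuffle $\widetilde{A}$ is also possible but less clean, since the partial transpose intertwines with the reshuffle only up to an additional ordinary transpose.
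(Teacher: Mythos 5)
Your proof is correct, and it is worth noting that the paper itself offers no argument for this lemma at all: it is quoted from Choi's papers \cite{Choi17,Choi18}, so your write-up supplies a self-contained proof where the paper only cites. I checked the four identities and they are right: for the $\tr_1$ part, the sum $\sum_{i<j}(W^*P_{i,j}W)^{\langle i,j\rangle}$ reproduces $\sum_{k\neq i}A_{k,k}$ on the $i$-th diagonal block and $-A_{j,i}$ off the diagonal, which is exactly $I_m\otimes(\tr_1 A)-A^\tau$, and the ``$+$'' version needs precisely the correction $2\bigoplus_i A_{i,i}$; for the $\tr_2$ part, the Kraus decompositions with $F_{k,l}=e_ke_l^*-e_le_k^*$ and $G_{k,l}=e_ke_l^*+e_le_k^*$, $E_{k,k}=e_ke_k^*$ do give $(\tr X)I_n\mp\hspace{-1pt}$ (respectively $\pm$) the right signs, i.e.\ $(\tr X)I_n-X=\sum_{k<l}F_{k,l}X^TF_{k,l}^*$ and $(\tr X)I_n+X=\sum_{k<l}G_{k,l}X^TG_{k,l}^*+2\sum_k E_{k,k}X^TE_{k,k}^*$, and conjugating $A^T\ge 0$ by $I_m\otimes K$ assembles the blocks correctly since $(A^T)_{i,j}=A_{j,i}^T$. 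Your route is essentially the standard one behind this circle of results: the Kraus identities are nothing but an explicit certificate that $X\mapsto(\tr X)I_n\pm X$ are completely copositive, which is the very ingredient the paper attributes to the original proofs of Theorems \ref{thmando} and \ref{thmllh} (the 2-copositivity of $\Psi(X)=(\tr X)I-X$ and $\Phi(X)=(\tr X)I+X$); what you add beyond that is a direct block-level decomposition for the $I_m\otimes\tr_1$ inequalities using the unitary congruences $W,V$ of the $2\times 2$ principal block compressions, which avoids the reshuffling argument $H\mapsto\widetilde H$ that the paper uses elsewhere (Theorem \ref{thm26}) to pass between the two partial traces, and your closing caveat about the reshuffle interacting with $\tau$ only up to a full transpose is a fair reason to prefer the direct route.
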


Now, we present a unified treatment of 
  Theorems \ref{thmando} 
and  \ref{thmllh} as well.  
\medskip 

\noindent 
{\bf New proof of Theorem \ref{thmando}.} 
 We define the map $\Phi : \mathbb{M}_m(\mathbb{M}_n) 
\to \mathbb{M}_m(\mathbb{M}_n)$ as 
\[   \Phi_{2}^{-} (X) :=(\tr_2 X^{\tau})\otimes I_n- X^{\tau}.\]   
On the other hand, we define  
\[  \Phi_1^- (X):=I_m \otimes \tr_1 X^{\tau} -X^{\tau}.\]   
Lemma \ref{thmchoi} 
implies that 
both $\Phi_2^-$ and $\Phi_1^-$  are positive linear maps 
on $\mathbb{M}_m(\mathbb{M}_n)$. 
Let $A$ be a positive semidefinite block matrix.  
Thus, we have 
\[  \Phi_2^-(A)=(\tr_2 A^{\tau})\otimes I_n- A^{\tau} \ge 0.\]   
Acting the map $\Phi_1^-$ to the matrix $\Phi_2^- (A)$,  we can obtain  
\begin{equation} \label{eqkey}
\Phi_1^- \bigl( \Phi_2^- (A) \bigr)= I_m\otimes \tr_1 {\Phi_2^- (A)}^{\tau} 
- {\Phi_2^- (A)}^{\tau}\ge 0.  
\end{equation}
By a directed computation, we can get 
$ {\Phi_2^-(A)}^{\tau} = (\tr_2 A)\otimes I_n - A$   
and 
\[  \tr_1 {\Phi_2^- (A)}^{\tau} = \tr_1 \bigl( (\tr_2 A )\otimes I_n -A\bigr) = 
\sum_{i=1}^m (\tr A_{i,i})I_n - \tr_1 A =(\tr A)I_n -\tr_1 A.   \]
Therefore,  inequality (\ref{eqkey}) yields 
the desired result in Theorem \ref{thmando}.  $\blacksquare$

\medskip 
\noindent 
{\bf Remarks.}~
In the above proof, we can see that Theorem \ref{thmando} 
is just a direct consequence of Lemma \ref{thmchoi}. 
To our surprise, 
Theorem \ref{thmando}  can also be proved  by using 
the positivity of $\Phi_1^-$ first, 
and then applying the positivity of $\Phi_2^-$ later.   
More precisely,  we first derive   
${\Phi_1^- (A)}\ge 0$, 
and then we have $\Phi_2^- (\Phi_1^- (A))\ge 0$. 
Upon simplification,  
one can immediately get  Theorem \ref{thmando}  again.  
We summarize this observation as the following proposition. 

\begin{proposition}
For every $X\in \mathbb{M}_m(\mathbb{M}_n)$,  we have 
$\Phi_1^- (\Phi_2^- (X)) = \Phi_2^- (\Phi_1^- (X))$. 
\end{proposition}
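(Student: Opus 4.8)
The plan is to verify the identity $\Phi_1^-(\Phi_2^-(X)) = \Phi_2^-(\Phi_1^-(X))$ by direct computation, unwinding the definitions of both maps and checking that each side equals a common explicit expression. First I would record the building blocks: for $X \in \mathbb{M}_m(\mathbb{M}_n)$ one has $(X^\tau)^\tau = X$, so applying the partial transpose twice inside nested compositions is harmless; also $\tr_1(Y \otimes I_n) = (\tr Y) I_n$ for $Y \in \mathbb{M}_m$, and $\tr_2(I_m \otimes Z) = (\tr Z) I_m$ for $Z \in \mathbb{M}_n$; and the partial transpose interacts with tensor products by $(Y \otimes I_n)^\tau = Y^T \otimes I_n$ while $(I_m \otimes Z)^\tau = I_m \otimes Z$. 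These are exactly the kinds of identities already used in the new proof of Theorem~\ref{thmando} above, so nothing new is needed.

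Next I would expand the left-hand side. By definition $\Phi_2^-(X) = (\tr_2 X^\tau) \otimes I_n - X^\tau$, so $\bigl(\Phi_2^-(X)\bigr)^\tau = (\tr_2 X^\tau) \otimes I_n - X$ (using that $(\tr_2 X^\tau)$ lives in $\mathbb{M}_m$ so its block transpose only transposes the scalar-block structure, and that the cross term $X^\tau$ transposes back to $X$; here one must be slightly careful that $(\tr_2 X^\tau) = (\tr_2 X)^T$, which keeps the tensor factor symmetric after the extra transpose). Then $\tr_1\bigl((\Phi_2^-(X))^\tau\bigr) = (\tr X) I_n - \tr_1 X$ exactly as in the displayed computation preceding Proposition~\ref{prop_unnamed}, and therefore
\[
\Phi_1^-(\Phi_2^-(X)) = I_m \otimes \bigl((\tr X) I_n - \tr_1 X\bigr) - \bigl((\tr_2 X)\otimes I_n - X\bigr) = (\tr X) I_{mn} - I_m \otimes (\tr_1 X) - (\tr_2 X)\otimes I_n + X.
\]
Then I would run the symmetric computation for $\Phi_2^-(\Phi_1^-(X))$: starting from $\Phi_1^-(X) = I_m \otimes \tr_1 X^\tau - X^\tau$, taking its partial transpose gives $I_m \otimes \tr_1 X^\tau - X$ (note $\tr_1 X^\tau = \tr_1 X$, since $\tr_1$ sums the diagonal blocks and partial transpose fixes each diagonal block), then applying $\tr_2$ and $\otimes I_n$ yields the same four-term expression. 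Matching the two expansions term by term completes the proof.

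The main obstacle, such as it is, is purely bookkeeping: one has to track how the partial transpose acts when it is applied \emph{after} a partial trace has already collapsed one tensor factor, i.e. keeping straight the distinction between $\tr_2 X^\tau$, $(\tr_2 X)^\tau$, and $(\tr_2 X)^T$, and similarly checking that $\tr_1 X^\tau = \tr_1 X$. Once those small identities are pinned down, both sides reduce to $(\tr X) I_{mn} + X - I_m \otimes (\tr_1 X) - (\tr_2 X)\otimes I_n$ and the equality is immediate. An alternative, essentially equivalent route is to avoid $X^\tau$ entirely by noting that $\Phi_i^-(X) = (\Psi_i(X^\tau))$ where $\Psi_1(Y) = I_m \otimes \tr_1 Y - Y$ and $\Psi_2(Y) = (\tr_2 Y)\otimes I_n - Y$ are the ``honest'' partial-trace maps, checking $\Psi_1 \Psi_2 = \Psi_2 \Psi_1$ (both equal $Y \mapsto (\tr Y)I_{mn} + Y - I_m\otimes\tr_1 Y - (\tr_2 Y)\otimes I_n$ on all of $\mathbb{M}_m(\mathbb{M}_n)$, not just on transposes), and then conjugating by $\tau$; I would present whichever version reads more cleanly in context, probably the direct one since the needed computations are already on the page.
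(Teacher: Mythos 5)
Your argument is correct and is essentially the paper's own: the proposition is justified there by exactly this direct expansion (already carried out in the new proof of Theorem \ref{thmando} and its remark), with both compositions reducing to $(\tr X)I_{mn}+X-I_m\otimes(\tr_1 X)-(\tr_2 X)\otimes I_n$. One minor slip in your prose: $\bigl(\Phi_2^-(X)\bigr)^{\tau}=(\tr_2 X)\otimes I_n-X$ rather than $(\tr_2 X^{\tau})\otimes I_n-X$, since $\bigl(\tr_2 X^{\tau}\bigr)^T=\tr_2 X$; your displayed formula already uses the correct form, so nothing breaks.
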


Correspondingly, we can present an alternative proof of 
Theorem \ref{thmllh} similarly. 

\medskip 

\noindent 
{\bf New proof of Theorem \ref{thmllh}.} 
We  define the maps $\Phi_2^+$ and $\Phi_1^+$ 
 on $\mathbb{M}_m(\mathbb{M}_n)$  as 
\[ 
 \Phi_2^+ (X) :=(\tr_2 X^{\tau})\otimes I_n + X^{\tau}, 
\]
and 
\[  \Phi_1^+ (X):=I_m \otimes \tr_1 X^{\tau}  + X^{\tau}.   \]
We can see from Lemma \ref{thmchoi} that 
both $\Phi_2^+$ and $\Phi_1^+$ are  positive linear maps. 
Similar to the lines of the previous proof, we  get 
$ \Phi_1^{-} ( \Phi_2^+ (A)) =
\Phi_2^{+} ( \Phi_1^{-} (A)) \ge 0 $, which leads to 
\[  (\tr A)I_{mn} -(\tr_2A)\otimes I_n \ge A - I_m \otimes (\tr_1 A).  \]
Moreover, we have $ \Phi_1^+ (\Phi_2^- (A)) = 
\Phi_2^{-} (\Phi_1^{+} (A)) \ge 0$. 
It follows that 
\[  (\tr A)I_{mn} + (\tr_2A)\otimes I_n \ge A + I_m \otimes (\tr_1 A).  \]
We mention that the positivity of $\Phi_1^+ (\Phi_2^+ (A))$ yields 
a trivial result. 
$\blacksquare$

\medskip 

In the remaining of this section, we shall pay attention to 
determinant inequalities of sector matrices 
involving partial traces. 
Motivated by Audenaert's result (\ref{eqaud}), 
Lin \cite{Lin16} recently  obtained a  determinantal inequality for partial traces, which states that 
if $A\in \mathbb{M}_m 
(\mathbb{M}_n)$ is positive semidefinite, then 
\begin{equation} \label{eqlin}
(\tr A)^{mn} +\det A   \ge   \det (\tr_1 A)^m +\det (\tr_2 A)^n. 
\end{equation}
We remark here that 
Fu, Lau and Tam \cite[Corollary 2.2]{FLT20} recently improved (\ref{eqlin})  
when $A$ is a density matrix, i.e., a positive semidefinite matrix with 
trace equal to $1$. 

The key step in the proof of (\ref{eqlin}) attributes to 
Theorem \ref{thmando} together with 
 the following interesting lemma. 
It is worth noting that Lemma \ref{lem31} is graceful and useful in deriving matrix inequalities; 
see, e.g., \cite{LF2021,LP2021,Lin14b} for  applications on 
Oppenheim type inequalities.

 \begin{lemma}  \cite{Lin16b} \label{lem31}  
Let $X, Y, W $ and $Z$ be positive semidefinite matrices of the same order.  
If  $X\ge W,X\ge Z$ and $X+Y\ge W+Z$, then
 	\[ \label{lin} \det X+\det Y\ge \det W+\det Z. \]
\end{lemma}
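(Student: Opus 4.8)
The plan is to reduce the matrix statement to a scalar one by a suitable diagonalization. Since $X$ is positive semidefinite, first I would handle the degenerate case: if $X$ is singular, then $\det X = 0$, and from $X \ge W \ge 0$ and $X \ge Z \ge 0$ one gets (by monotonicity of eigenvalues under the L\"owner order, or by the fact that the range of $W$ and $Z$ is contained in that of $X$) that $W$ and $Z$ are also singular, so both sides vanish and the inequality is trivial. Hence I may assume $X > 0$.

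With $X > 0$, the key trick is congruence: write $X = X^{1/2} X^{1/2}$ and set $\widetilde{W} = X^{-1/2} W X^{-1/2}$, $\widetilde{Z} = X^{-1/2} Z X^{-1/2}$, $\widetilde{Y} = X^{-1/2} Y X^{-1/2}$. Congruence by $X^{-1/2}$ preserves positive semidefiniteness and the L\"owner order, so the hypotheses become $I \ge \widetilde{W}$, $I \ge \widetilde{Z}$, and $I + \widetilde{Y} \ge \widetilde{W} + \widetilde{Z}$; dividing the target inequality $\det X + \det Y \ge \det W + \det Z$ by $\det X$, it becomes $1 + \det \widetilde{Y} \ge \det \widetilde{W} + \det \widetilde{Z}$. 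So it suffices to prove the lemma in the normalized case $X = I$.

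Now all four matrices are positive semidefinite with $I \ge \widetilde W$, $I \ge \widetilde Z$, i.e. $0 \le \widetilde W, \widetilde Z \le I$, and $\widetilde Y \ge \widetilde W + \widetilde Z - I$. The plan is to bound $\det \widetilde Y$ from below: since the determinant is monotone on positive semidefinite matrices, if $\widetilde W + \widetilde Z - I \ge 0$ we would get $\det \widetilde Y \ge \det(\widetilde W + \widetilde Z - I)$, but in general $\widetilde W + \widetilde Z - I$ need not be positive, so I instead use $\det \widetilde Y \ge \det_+(\widetilde W + \widetilde Z - I)$ only where it helps and argue more carefully. The cleanest route is: it is enough to show the scalar-type inequality $1 + \det(\widetilde W + \widetilde Z - I) \ge \det \widetilde W + \det \widetilde Z$ whenever $0 \le \widetilde W, \widetilde Z \le I$ — because combined with $\det \widetilde Y \ge \det(\widetilde W + \widetilde Z - I)$ (valid once one checks $\widetilde W + \widetilde Z - I$ is dominated appropriately, or by passing to the positive part) this gives the result. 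To prove this reduced inequality I would simultaneously ``compare'' $\widetilde W$ and $\widetilde Z$ by a joint congruence: there is a single invertible $S$ with $S \widetilde W S^* = \mathrm{diag}(w_i)$ and $S \widetilde Z S^* = \mathrm{diag}(1 - w_i)$ type normal form only in special cases, so more robustly I apply the inequality $\det(A+B) \ge \det A + \det B$ for $A, B \ge 0$ together with $\det(I - \widetilde W) \cdot \det(\text{stuff})$ bookkeeping; the honest tool is the concavity/superadditivity of $\det^{1/m}$ on the PSD cone (the Minkowski determinant inequality) applied to $\widetilde Y + (I - \widetilde W) + (I - \widetilde Z) \ge I$.

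The main obstacle I expect is precisely this last step: turning the L\"owner-order hypothesis $I + \widetilde Y \ge \widetilde W + \widetilde Z$ into a determinant inequality when $\widetilde W + \widetilde Z - I$ fails to be positive semidefinite, since then naive monotonicity of $\det$ does not apply. The resolution should be to write $I + \widetilde Y - \widetilde W = (I - \widetilde Z) + (\widetilde Y - \widetilde W + \widetilde Z) $ — arranging the terms so that one adds \emph{only positive} matrices — and then invoke the superadditivity $\det(P + Q) \ge \det P + \det Q$ for $P, Q \ge 0$ (equivalently the Minkowski inequality), which holds without any normalization. Carefully choosing which positive pieces to group, and checking each grouped piece is genuinely positive semidefinite from the three hypotheses, is the delicate part; once the right grouping is found the rest is a short computation.
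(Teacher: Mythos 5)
Your preliminary reductions are fine: the singular-$X$ case and the congruence by $X^{-1/2}$ reducing everything to $X=I$ are both correct (note the paper itself gives no proof of this lemma, quoting it from \cite{Lin16b}). The trouble is that everything after the normalization, which is where the actual content lies, is either false as stated or left open. First, the bound $\det\widetilde{Y}\ge\det(\widetilde{W}+\widetilde{Z}-I)$ on which your ``cleanest route'' leans is false when $\widetilde{W}+\widetilde{Z}-I$ is not positive semidefinite: take $\widetilde{W}=\widetilde{Z}=\widetilde{Y}=0$ with $n$ even, so $\det(\widetilde{W}+\widetilde{Z}-I)=\det(-I)=1>0=\det\widetilde{Y}$. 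The correct statement, via Weyl monotonicity ($\lambda_k(\widetilde{Y})\ge\max\{\lambda_k(\widetilde{W}+\widetilde{Z}-I),0\}$), gives $\det\widetilde{Y}\ge\prod_k\max\{\lambda_k(\widetilde{W}+\widetilde{Z}-I),0\}$, and this lower bound is $0$ unless $\widetilde{W}+\widetilde{Z}-I$ is positive definite; so the argument must split into two cases, and the case where $\widetilde{W}+\widetilde{Z}-I$ is not positive definite --- where one needs $1\ge\det\widetilde{W}+\det\widetilde{Z}$, which follows from $\det\widetilde{W}\le\lambda_{\min}(\widetilde{W})\le v^*\widetilde{W}v$ and likewise for $\widetilde{Z}$, with $v$ a unit vector satisfying $v^*(\widetilde{W}+\widetilde{Z})v\le 1$ --- is never addressed in your plan. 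Second, your proposed resolution does not work: in the grouping $I+\widetilde{Y}-\widetilde{W}=(I-\widetilde{Z})+(\widetilde{Y}-\widetilde{W}+\widetilde{Z})$ the second summand need not be positive semidefinite (e.g. $\widetilde{W}=I$, $\widetilde{Z}=\widetilde{Y}=\tfrac14 I$ satisfies all hypotheses, yet $\widetilde{Y}-\widetilde{W}+\widetilde{Z}=-\tfrac12 I$), so it fails your own criterion for the grouping.

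More fundamentally, superadditivity $\det(P+Q)\ge\det P+\det Q$ (or Minkowski's inequality) is the wrong tool: it lower-bounds determinants of sums, whereas the remaining case requires an \emph{upper} bound on $\det\widetilde{W}+\det\widetilde{Z}$, namely $1+\det(\widetilde{W}+\widetilde{Z}-I)\ge\det\widetilde{W}+\det\widetilde{Z}$ for $0\le\widetilde{W},\widetilde{Z}\le I$ with $\widetilde{W}+\widetilde{Z}\ge I$. In the scalar case this is an identity, so no lossy step can survive. What is needed is a supermodularity property of the determinant on the positive semidefinite cone, e.g. that $A\mapsto\det(A+C)-\det A$ is nondecreasing in $A\ge 0$ for fixed $C\ge 0$ (provable from monotonicity of the adjugate, i.e. of the $(n-1)$st compound, under the L\"owner order, or from nonnegativity and multilinearity of mixed discriminants). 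Granting that, write $\widetilde{W}=I-P$, $\widetilde{Z}=I-Q$ and compare the increment of adding $Q$ at the bases $I-Q$ and $I-P-Q$ to finish the positive-definite case. This supermodularity ingredient is the heart of the lemma and is precisely what your plan acknowledges it has not found, so as it stands the proposal has a genuine gap.
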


\noindent 
{\bf Remark.}~
We observe that  Lemma \ref{lem31} implies the determinant inequality:  
\[ \det (A+B+C) +\det C \ge \det (A+C) +\det (B+C), \]
where $A,B$ and $C$ are positive semidefinite matrices.  

\vspace{0.4cm}

With the help of Lemma \ref{lem31}, 
we can easily  present two analogues of  (\ref{eqlin}). 

\begin{proposition} \label{prop42}
 Let $A\in \mathbb{M}_m(\mathbb{M}_n)$ be positive semidefinite. Then
\begin{equation*}  \label{eq42}
 	 (\tr A)^{mn} +\det (\tr_1 A)^m \ge \det A +\det (\tr_2 A)^n, 
\end{equation*}
and  
\begin{equation*}  \label{eq43}
	(\tr A)^{mn} + \det(\tr_2 A)^n \ge 
   \det A + \det(\tr_1 A)^m .
\end{equation*}
\end{proposition}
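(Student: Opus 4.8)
The plan is to deduce both inequalities of Proposition~\ref{prop42} from Ando's inequality (Theorem~\ref{thmando}) by feeding the right quadruple of positive semidefinite matrices into Lemma~\ref{lem31}, in exactly the same spirit as Lin's proof of \eqref{eqlin}. For the first inequality, I would set
\[ X = (\tr A)I_{mn}, \quad Y = A, \quad W = (\tr_2 A)\otimes I_n, \quad Z = I_m\otimes (\tr_1 A). \]
All four are positive semidefinite: $X$ and $Y$ obviously, and $W$, $Z$ because $\tr_1 A$ and $\tr_2 A$ are positive semidefinite whenever $A$ is. Then Lemma~\ref{lem31} would yield $\det X + \det Y \ge \det W + \det Z$, i.e.
\[ (\tr A)^{mn} + \det A \ge \det\bigl((\tr_2 A)\otimes I_n\bigr) + \det\bigl(I_m\otimes (\tr_1 A)\bigr) = \det(\tr_2 A)^n + \det(\tr_1 A)^m, \]
using the multiplicativity of the determinant over tensor products, $\det(X\otimes Y) = (\det X)^n(\det Y)^m$ for $X\in\mathbb{M}_m$, $Y\in\mathbb{M}_n$. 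But that is just \eqref{eqlin} again, not the asymmetric bound claimed. So the correct choice for the \emph{first} claimed inequality is instead
\[ X = (\tr A)I_{mn}, \quad Y = I_m\otimes(\tr_1 A), \quad W = (\tr_2 A)\otimes I_n, \quad Z = A. \]

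The work then is to verify the three hypotheses of Lemma~\ref{lem31} for this quadruple. First, $X \ge W$: this is $(\tr A)I_{mn} \ge (\tr_2 A)\otimes I_n$, which follows from $(\tr A)I_m \ge \lambda_{\max}(\tr_2 A)I_m \ge \tr_2 A$ together with $\tr A = \tr(\tr_2 A)$, exactly as remarked in the text after \eqref{eqando}. Second, $X \ge Z$: this is $(\tr A)I_{mn}\ge A$, immediate since $\tr A \ge \lambda_{\max}(A)$ for positive semidefinite $A$. Third, and this is the crux, $X + Y \ge W + Z$, i.e.
\[ (\tr A)I_{mn} + I_m\otimes(\tr_1 A) \ge (\tr_2 A)\otimes I_n + A, \]
which is precisely Ando's inequality in the form of Theorem~\ref{thmando} (rearranged). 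Applying Lemma~\ref{lem31} then gives $\det X + \det Y \ge \det W + \det Z$, that is $(\tr A)^{mn} + \det(\tr_1 A)^m \ge \det(\tr_2 A)^n + \det A$, which is the first assertion. For the second assertion I would swap the roles of the two partial traces: take $X = (\tr A)I_{mn}$, $Y = (\tr_2 A)\otimes I_n$, $W = I_m\otimes(\tr_1 A)$, $Z = A$; the hypotheses $X\ge W$, $X\ge Z$, $X+Y\ge W+Z$ are verified identically (the last one is again Theorem~\ref{thmando}), and Lemma~\ref{lem31} delivers $(\tr A)^{mn} + \det(\tr_2 A)^n \ge \det(\tr_1 A)^m + \det A$.

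The main obstacle is essentially bookkeeping rather than depth: one must be careful that the matrix playing the role of $X$ in Lemma~\ref{lem31} genuinely dominates \emph{both} $W$ and $Z$, and that $X+Y$ dominates $W+Z$ — in other words, that Ando's inequality is invoked with the correct term on each side. Since Theorem~\ref{thmando} says $(\tr A)I_{mn} + A \ge I_m\otimes(\tr_1 A) + (\tr_2 A)\otimes I_n$, the rearrangement needed is $(\tr A)I_{mn} + I_m\otimes(\tr_1 A) \ge A + (\tr_2 A)\otimes I_n$, which is \emph{not} literally Theorem~\ref{thmando} but rather the equivalent form obtained by moving $A$ to the other side — this is legitimate because, as noted after \eqref{eqando}, $(\tr A)I_{mn} - (\tr_2 A)\otimes I_n \ge 0$, so all four matrices in the Lemma are positive semidefinite and the hypothesis $X+Y\ge W+Z$ reads correctly. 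Once the quadruples are pinned down, the only remaining computation is the tensor-product determinant identity, which is standard. I expect the whole proof to be three or four lines.
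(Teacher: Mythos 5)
Your quadruple for Lemma \ref{lem31} is the same as the paper's (up to swapping the roles of $W$ and $Z$, which is immaterial since the hypotheses and conclusion are symmetric in them), and your verifications of $X\ge W$ and $X\ge Z$ are fine. The gap is in the crucial third hypothesis $X+Y\ge W+Z$. You claim that
\[ (\tr A)I_{mn} + I_m\otimes(\tr_1 A) \ \ge\ A + (\tr_2 A)\otimes I_n \]
is Theorem \ref{thmando} ``rearranged by moving $A$ to the other side.'' It is not. Ando's inequality rearranges to $(\tr A)I_{mn} - (\tr_2 A)\otimes I_n \ge I_m\otimes(\tr_1 A) - A$, whereas what you need is $(\tr A)I_{mn} - (\tr_2 A)\otimes I_n \ge A - I_m\otimes(\tr_1 A)$: the right-hand sides are negatives of each other, and in the L\"owner order $M\ge N$ does not imply $M\ge -N$. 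Your justification that the swap is ``legitimate because $(\tr A)I_{mn}-(\tr_2 A)\otimes I_n\ge 0$'' does not repair this; positivity of the left side says nothing about which sign choice on the right it dominates. The paper makes exactly this point: $A$ and $I_m\otimes(\tr_1 A)$ can be incomparable (the matrix in (\ref{eqeq1})), which is why the inequality you need is not a consequence of Ando's theorem but is the separate result of Li, Liu and Huang, Theorem \ref{thmllh} (its first inequality). The same substitution error occurs in your second quadruple, where $X+Y\ge W+Z$ reads $(\tr A)I_{mn} + (\tr_2 A)\otimes I_n \ge A + I_m\otimes(\tr_1 A)$ --- again the second inequality of Theorem \ref{thmllh}, not a rearrangement of Theorem \ref{thmando}.

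The repair is a one-word change: cite Theorem \ref{thmllh} rather than Theorem \ref{thmando} for the step $X+Y\ge W+Z$; with that, your argument coincides with the paper's proof (positivity checks via $\tr A=\tr(\tr_2 A)\ge\lambda_{\max}(\tr_2 A)$, then Lemma \ref{lem31}, then $\det(X\otimes Y)=(\det X)^n(\det Y)^m$). One further small point: in the second case the domination $X\ge W$ is $(\tr A)I_{mn}\ge I_m\otimes(\tr_1 A)$, which should be checked via $\tr A=\tr(\tr_1 A)\ge\lambda_{\max}(\tr_1 A)$, not via $\tr_2 A$, so the hypotheses are verified analogously rather than ``identically.''
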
 
 
\begin{proof}
We prove the first inequality only, since the second one
 can be proved in exactly the same way.  
Let $X=(\tr A)I_{mn}, Y=I_m\otimes (\tr_1 A),W=A$ and $Z=(\tr_2 A)\otimes I_n$. It is easy to see that 
\begin{equation*} \label{eqeq} 
(\tr A)I_m =\sum_{i=1}^m (\tr A_{i,i})I_m =
\bigl( \tr (\tr_2 A)\bigr)I_m \ge \lambda_{\max}(\tr_2 A) I_m 
\ge \tr_2 A, 
\end{equation*}
which implies that $X\ge Z\ge 0$, and clearly $X\ge W\ge 0$. 
Moreover, Theorem \ref{thmllh} says that  
$X+Y\ge W+Z$. That is, all conditions in Lemma \ref{lem31} are satisfied. Therefore, 
\begin{align*}
(\tr A)^{mn} + \det \bigl( I_m\otimes (\tr_1 A) \bigr)
\ge  \det A  +\det \bigl( (\tr_2 A) \otimes I_n \bigr).
\end{align*}
It is well-known \cite[p. 37]{Zhan13} that for every $X\in \mathbb{M}_m$ and $Y\in \mathbb{M}_n$, 
\[ \det (X\otimes Y) =(\det X)^n(\det Y)^m.  \]
Thus, we complete the proof of the required result. 
\end{proof}

We next give an improvement on Proposition \ref{prop42}. 

\begin{theorem} \label{prop44}
 Let $A\in \mathbb{M}_m(\mathbb{M}_n)$ be positive semidefinite. Then
\[  (\tr A)^{mn} +  \det (\tr_1 A)^m \ge  m^{nm} \bigl( \det A +\det (\tr_2 A)^n\bigr), \]
and 
\[ 	(\tr A )^{mn} + \det(\tr_2 A)^n \ge 
 n^{mn} \bigl(  \det A + \det(\tr_1 A)^m \bigr). \]
\end{theorem}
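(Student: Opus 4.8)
The plan is to derive Theorem~\ref{prop44} from Proposition~\ref{prop42} by sharpening the two places where the argument of Proposition~\ref{prop42} was wasteful, namely the passage through Lemma~\ref{lem31} and the use of the crude bound $(\tr A)I_m \ge \tr_2 A$. Concretely, I would re-examine the chain leading to the first inequality of Proposition~\ref{prop42}, but instead of invoking Lemma~\ref{lem31} directly I would first improve the estimate relating $\tr_2 A$ (resp.\ $\tr_1 A$) to $(\tr A)I$. Recall $(\tr A)I_m = (\tr(\tr_2 A))I_m \ge (\text{rank-type bound})$: since $\tr_2 A$ is an $m\times m$ positive semidefinite matrix, the arithmetic mean of its eigenvalues is $\tfrac{1}{m}\tr(\tr_2 A) = \tfrac{1}{m}\tr A$, so actually $(\tr A)I_m - \tr_2 A \ge 0$ can be replaced by the observation that $\det\big((\tr A)I_m - (\text{something})\big)$ admits an extra factor of $m^{m}$-type growth. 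The key quantitative input is the elementary fact that for a positive semidefinite $m\times m$ matrix $T$ one has $(\tr T)^m \ge m^m \det T$ (AM--GM on the eigenvalues), and its block analogue via Lemma~\ref{lemfis}.

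The main steps, in order, would be: (1) Start from $X = (\tr A)I_{mn}$, $Y = I_m\otimes(\tr_1 A)$, $W = A$, $Z = (\tr_2 A)\otimes I_n$, exactly as in the proof of Proposition~\ref{prop42}, and recall that Theorem~\ref{thmllh} gives $X + Y \ge W + Z$ while $X \ge W$ and $X \ge Z$. (2) Rather than applying Lemma~\ref{lem31} to get $\det X + \det Y \ge \det W + \det Z$ and then evaluating $\det X = (\tr A)^{mn}$, I would apply Lemma~\ref{lem31} to the \emph{scaled} quadruple obtained by noting that $X \ge Z$ can be improved: write $\tr A = \tr(\tr_2 A) \ge m\,\lambda_{\min}(\tr_2 A)$ is the wrong direction, so instead exploit that $\det\big((\tr A)I_{mn}\big) = (\tr A)^{mn}$ and that $(\tr A)^{n} = (\tr(\tr_2 A))^{n} \ge n^{n}\det(\tr_2 A)$ and $(\tr A)^m = (\tr(\tr_1 A))^m \ge m^m \det(\tr_1 A)$ by AM--GM. (3) Feed these into the already-proven Proposition~\ref{prop42}: from $(\tr A)^{mn} + \det(\tr_1 A)^m \ge \det A + \det(\tr_2 A)^n$, I would want to "absorb" a factor $m^{nm}$ into the right-hand side; this should follow by replacing $\tr A$ by the smaller quantity suggested by AM--GM applied at the level of $\tr_1 A$, which is an $n\times n$ matrix whose trace is $\tr A$ and which sits inside the $mn\times mn$ tensor picture with multiplicity $m$.

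The cleanest route is probably not to re-run Lemma~\ref{lem31} at all, but to combine Proposition~\ref{prop42} with the two scalar inequalities $(\tr A)^{mn} \ge$ (a constant times a product involving $\det(\tr_1 A)$, $\det(\tr_2 A)$, $\det A$). Specifically I expect the identity $\tr A = \tr(\tr_1 A) = \tr(\tr_2 A)$ together with AM--GM to give $(\tr A)^{mn} \ge m^{nm}\det(\tr_1 A)^{m}$ — wait, that needs $\tr_1 A \in \mathbb{M}_n$ so $(\tr(\tr_1 A))^n \ge n^n \det(\tr_1 A)$, hence $(\tr A)^{mn} = ((\tr A)^n)^m \ge n^{nm}\det(\tr_1 A)^m$, and symmetrically $(\tr A)^{mn} \ge m^{nm}\det(\tr_2 A)^n$ using $\tr_2 A \in \mathbb{M}_m$. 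Then for the first claimed inequality I would argue: by Proposition~\ref{prop42}, $(\tr A)^{mn} + \det(\tr_1 A)^m \ge \det A + \det(\tr_2 A)^n$; I need the stronger $(\tr A)^{mn} + \det(\tr_1 A)^m \ge m^{nm}(\det A + \det(\tr_2 A)^n)$. Since $m^{nm}\det(\tr_2 A)^n \le (\tr A)^{mn}$ already, it suffices to show $\det(\tr_1 A)^m \ge m^{nm}\det A$ — but that is precisely the strong Fiedler--Markham/Lin inequality of Theorem~\ref{thmstrong} (second inequality: $(\det(\tr_1 A)/m^{n})^{m}\ge \det A$, noting here the roles of $n,m$ are swapped relative to the statement there since $A \in \mathbb{M}_m(\mathbb{M}_n)$). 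So the proof assembles as: $(\tr A)^{mn} + \det(\tr_1 A)^m \ge m^{nm}\det(\tr_2 A)^n + m^{nm}\det A$, using Theorem~\ref{thmstrong} on the $\det(\tr_1 A)^m$ term and the AM--GM bound $(\tr A)^{mn} \ge m^{nm}\det(\tr_2 A)^n$ on the $(\tr A)^{mn}$ term; the second inequality of the theorem follows by the same argument with $m,n$ and $\tr_1,\tr_2$ interchanged.

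\textbf{Main obstacle.} The only delicate point is bookkeeping with the swapped dimensions: Theorem~\ref{thmstrong} as stated uses $H \in \mathbb{M}_n(\mathbb{M}_k)$ with exponents $n^k$ and $k^n$, whereas here $A \in \mathbb{M}_m(\mathbb{M}_n)$, so one must carefully match $(\det(\tr_1 A)/m^{n})^m \ge \det A$ and $(\det(\tr_2 A)/n^{m})^n \ge \det A$ — i.e.\ confirm that $\det(\tr_1 A)^m \ge m^{nm}\det A$ and $\det(\tr_2 A)^n \ge n^{mn}\det A$ are exactly the two halves of Theorem~\ref{thmstrong} translated to this indexing. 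Once that matching is verified, the rest is immediate from Theorem~\ref{thmstrong}, the AM--GM bound on traces, and the identity $\tr A = \tr(\tr_1 A) = \tr(\tr_2 A)$; no new use of Lemma~\ref{lem31} or Theorem~\ref{thmllh} is actually required, which makes this proof shorter than the route through Proposition~\ref{prop42}.
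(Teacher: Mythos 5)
Your final assembled argument is correct and is essentially the paper's own proof: the paper likewise discards Lemma~\ref{lem31} and Theorem~\ref{thmllh}, pairing the strong Fiedler--Markham bound of Theorem~\ref{thmstrong} (e.g.\ $\det(\tr_2 A)^n \ge n^{mn}\det A$) with the AM--GM estimate $(\tr A)^{mn}\ge n^{mn}\det(\tr_1 A)^m$ obtained from $\tr A=\tr(\tr_1 A)$, exactly as you do (with the roles of the two inequalities interchanged). Your dimension bookkeeping for Theorem~\ref{thmstrong} applied to $A\in\mathbb{M}_m(\mathbb{M}_n)$ is also the correct translation, so the proof goes through as you outline.
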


\begin{proof}
We only prove the second inequality.  
Invoking Theorem \ref{thmstrong}, we get 
\begin{equation*} 
 \left( \frac{\det (\tr_2 A)}{n^m} \right)^n \ge \det A.
\end{equation*}
Equivalently, we have $  \det (\tr_2 A)^n \ge n^{mn} \det A$. 
 It suffices to show that 
\[  (\tr A)^n \ge n^n \det (\tr_1 A). \]  
Note that 
\[ \tr A= \sum_{i=1}^m \tr (A_{i,i}) = \tr \left(\sum_{i=1}^m A_{i,i} \right) 
=\tr (\tr_1 A). \] 
We denote $X:=\tr_1A$, which 
is a positive semidefinite matrix of order $n$. 
So we need to prove that  $(\tr X)^n\ge n^n \det X$. 
This is equivalent to showing 
\[  \left( \sum_{i=1}^n \lambda_i(X) \right)^n \ge n^n \prod_{i=1}^n 
\lambda_i(X), \] 
which is a direct consequence of the AM-GM inequality. 
\end{proof}

Surprisingly, the proof of Theorem \ref{prop44} seems simpler than 
that of Proposition \ref{prop42} since 
it does not rely on Theorem \ref{thmllh} and Lemma \ref{lem31}. 
However, it allows us to provide
 a great improvement on Proposition \ref{prop42} whenever 
 $m,n$ are large integers.

In the sequel, we shall denote $|A|=(A^*A)^{1/2}$, 
which is called the modulus of $A$.  
We remark that this notation is different from that  in Theorem \ref{thm26}. 
Note that $|A|$ is positive semidefinite, and the eigenvalues of $|A|$ 
are called  the singular values of $A$. 
In 2019, Yang, Lu and Chen \cite{YLC19} 
extended (\ref{eqlin}) to sector matrices. 

\begin{equation*}
 (\tr |A|)^{mn} + \det |A| \ge (\cos \alpha)^{mn} |\det (\tr_1 A)|^m + 
(\cos \alpha)^{mn} |\det (\tr_2 A)|^n. 
\end{equation*}

Now, we are ready to present an extension on Theorem \ref{prop44}.

\begin{theorem} \label{thm36}
Let $A\in \mathbb{M}_m(\mathbb{M}_n)$ be such that $W(A)\subseteq S_{\alpha}$. Then 
\begin{equation*}
 (\tr |A|)^{mn} + 
\left| {\det (\tr_1 A)}\right|^m 
\ge 
(m \cos \alpha )^{mn} \bigl( \det |A| +  |\det (\tr_2 A)|^n\bigr),
\end{equation*}
and 
\begin{equation*}
 (\tr |A|)^{mn} + \left| {\det (\tr_2 A)}\right|^n \ge 
(n \cos \alpha )^{mn} 
\bigl( \det |A| + |\det (\tr_1 A)|^m \bigr).
\end{equation*}
\end{theorem}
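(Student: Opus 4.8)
The plan is to follow the blueprint of the proof of Theorem \ref{prop44}, but with every invocation of a positive-semidefinite determinant inequality replaced by its sector-matrix analogue. I will prove the second inequality only, the first being obtained by the unitary-similarity trick $\widetilde{A}$ exactly as in the proof of Theorem \ref{thm26} (note that passing from $A$ to $\widetilde{A}$ swaps the roles of $m$ and $n$, of $\tr_1$ and $\tr_2$, and leaves $\det|A|$ and $\tr|A|$ unchanged since $\widetilde A$ and $A$ are unitarily similar). So it suffices to establish the displayed lower bound for $(\tr|A|)^{mn}+|\det(\tr_2 A)|^n$.

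The first step is to bound $\det|A|$ in terms of $|\det(\tr_2 A)|$. By Lemma \ref{lem22}, $\det|A|=|\det A|/(\text{unimodular})$... more precisely $\det|A|\le$ nothing directly; instead I use $|\det A|\le(\sec\alpha)^{mn}\det(\Re A)$ and the Fiedler--Markham strong form (Theorem \ref{thmstrong}) applied to the positive semidefinite matrix $\Re A$, giving $\det(\Re A)\le\bigl(\det(\tr_2(\Re A))\bigr)^n/n^{mn}=\bigl(\det\Re(\tr_2 A)\bigr)^n/n^{mn}$. By Lemma \ref{prop25}, $W(\tr_2 A)\subseteq S_\alpha$, so $\Re(\tr_2 A)$ is positive definite and Lemma \ref{lem23} yields $\det\Re(\tr_2 A)\le|\det(\tr_2 A)|$. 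Since $\det|A|=|\det A|$, combining these gives
\[
\det|A|\le \frac{(\sec\alpha)^{mn}}{n^{mn}}\,|\det(\tr_2 A)|^n,
\]
i.e. $|\det(\tr_2 A)|^n\ge (n\cos\alpha)^{mn}\det|A|$.

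The second step is the scalar estimate $(\tr|A|)^{mn}\ge(n\cos\alpha)^{mn}|\det(\tr_1 A)|^m$. Here I would write $\tr|A|\ge|\tr A|=|\tr(\tr_1 A)|$; since $W(\tr_1 A)\subseteq S_\alpha$ by Lemma \ref{prop25}, each eigenvalue $\mu_i$ of $\tr_1 A$ lies in $S_\alpha$, so $|\tr(\tr_1 A)|=\bigl|\sum\mu_i\bigr|\ge\Re\sum\mu_i=\sum\Re\mu_i\ge\cos\alpha\sum|\mu_i|\ge\cos\alpha\cdot n\bigl(\prod|\mu_i|\bigr)^{1/n}=n\cos\alpha\,|\det(\tr_1 A)|^{1/n}$ by the AM--GM inequality. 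Raising to the power $mn$ gives $(\tr|A|)^{mn}\ge(n\cos\alpha)^{mn}|\det(\tr_1 A)|^m$. Adding the two displayed bounds yields the second inequality of the theorem. The one subtlety to watch — and the only real obstacle — is the legitimacy of feeding $\Re A$ into Theorem \ref{thmstrong}: one must check $\tr_2(\Re A)=\Re(\tr_2 A)$ (immediate from linearity of $\tr_2$ and the definition of the Cartesian decomposition) and that $\tr|A|\ge|\tr A|$ (clear, since $\tr|A|=\sum\sigma_i(A)\ge|\sum\lambda_i(A)|=|\tr A|$). Everything else is bookkeeping with the constants $m^{mn}$, $n^{mn}$ and keeping straight which partial trace goes with which dimension.
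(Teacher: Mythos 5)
Your argument is correct, and at bottom it rests on the same reduction as the paper: pass to $\Re A$, use the strengthened Fiedler--Markham bound for positive semidefinite matrices, and convert back with Lemmas \ref{lem22}, \ref{lem23} and \ref{prop25}. The organization differs in two respects. The paper proves the first inequality directly by noting $\tr|A|\ge\tr(\Re A)$ and $|\det(\tr_1 A)|\ge\det\bigl(\tr_1(\Re A)\bigr)$ and then applying Theorem \ref{prop44} to $\Re A$ as a black box, absorbing the sector constant only at the last step via Lemma \ref{lem22}; you instead split the claim into two standalone bounds, namely $|\det(\tr_2 A)|^n\ge(n\cos\alpha)^{mn}\det|A|$ (which is Theorem \ref{thmstrong} applied to $\Re A$ combined with Lemmas \ref{lem22} and \ref{lem23}, i.e.\ essentially Kuai's inequality (\ref{eqkuaitr2})) and $(\tr|A|)^{mn}\ge(n\cos\alpha)^{mn}|\det(\tr_1 A)|^m$, the latter proved by an eigenvalue argument ($\mu_i\in S_\alpha$ gives $\Re\mu_i\ge|\mu_i|\cos\alpha$, then AM--GM), and you obtain the first inequality from the second by the unitary rearrangement $A\mapsto\widetilde A$ rather than by a second symmetric computation. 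Both routes are legitimate; yours inlines the proof of Theorem \ref{prop44} with the sector corrections applied termwise and saves the second half of the argument via the $\widetilde A$ trick, while the paper's is a slightly quicker reduction to the already-proved positive semidefinite statement. One small remark: your inequality $\tr|A|\ge|\tr A|$ is true but quietly invokes Weyl's majorant theorem ($\sum_i|\lambda_i(A)|\le\sum_i\sigma_i(A)$); you only need $\tr|A|\ge\Re\tr A=\sum_i\Re\mu_i$, which follows at once from $\sigma_i(A)\ge\lambda_i(\Re A)$ in Lemma \ref{lem23}, exactly as the paper does, so it is cleaner to drop the modulus there.
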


\begin{proof} 
We only prove the first inequality. 
According to the definition of $S_{\alpha}$, 
if $W(A)\subseteq S_{\alpha}$, then $\Re A$ is positive definite and its trace is positive. 
By Lemma \ref{lem23}, we have 
\begin{equation*}\label{eq12} 
\tr |A| =\sum_{i=1}^{mn} \sigma_i(A) \ge \sum_{i=1}^{mn} \lambda_i(\Re A) =
\tr (\Re A) \ge 0. 
\end{equation*}
It is noteworthy by Lemma \ref{prop25} 
that $W(\tr_1 A) \subseteq S_{\alpha}$ and 
$W(\tr_2 A) \subseteq S_{\alpha}$. 
Clearly, we have
 $\Re (\tr_1 A) = \tr_1( \Re A)$ and $\Re( \tr_2  A) =  \tr_2( \Re A)$. 
By setting $X=\tr_1 A$ in Lemma \ref{lem23}, 
we get 
\[ |\det (\tr_1 A)| \ge \det \bigl( \Re (\tr_1 A) \bigr) 
=\det \bigl( \tr_1( \Re A) \bigr).  \]  
Note that $\Re A$ is positive semidefinite. 
By applying Theorem \ref{prop44}, we can obtain 
\begin{align*}
 (\tr |A|)^{mn} + 
\left| {\det (\tr_1 A)}\right|^m   
&\ge  (\tr \, \Re A)^{mn} + 
  \bigl( {\det \tr_1( \Re A)} \bigr)^m\\ 
& \ge m^{nm}\bigl( \det (\Re A) + \bigl(  \det  \Re( \tr_2  A) \bigr)^n \bigr) \\ 
&\ge (m\cos \alpha)^{mn} |\det A| + (m\cos \alpha)^{mn} |\det (\tr_2 A) |^n, 
\end{align*}
where the last inequality holds 
from Lemma \ref{lem23} by setting $X=A$ and $\tr_2 A$  
respectively. 
\end{proof}

\section{Trace inequalities for two by two  block matrices}

\label{sec5}

Positive semidefinite $2\times 2$ block matrices are extensively studied, 
such a partition yields a great deal of versatile and elegant matrix inequalities;  
see, e.g., \cite{Gumus18, KL17, Li20laa, FG2021} for details.  
Recently, Kittaneh and Lin \cite{KL17} (or see \cite{Lin14}) proved the following 
 trace inequalities. 
 
 \begin{theorem} \cite{KL17,Lin14} \label{thm51}
Let $\begin{bmatrix}A & B \\ B^* & C \end{bmatrix}\in \mathbb{M}_{2}(\mathbb{M}_{k})$ 
be positive semidefinite. Then 
\begin{equation*} \label{eqtr}
  \tr A \,\tr C -\tr B^*\,\tr B \ge \bigl| \tr AC -\tr B^*B \bigr|, 
\end{equation*}
and 
\begin{equation*} \label{eqtrr}
  \tr A \,\tr C +\tr B^*\,\tr B \ge \tr AC + \tr B^*B.  
\end{equation*}
\end{theorem}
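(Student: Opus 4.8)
The plan is to reduce both inequalities to the scalar-block case (i.e., the case where $A,B,C$ are $1\times 1$) by exploiting the fact that the trace is a positive linear functional that turns a $2\times 2$ block matrix with $k\times k$ blocks into a scalar $2\times 2$ matrix, and then to invoke the $2$-positivity and $2$-copositivity structure already discussed in Section~\ref{sec4}. Concretely, consider the map $\Phi\colon \mathbb{M}_k \to \mathbb{C}$ given by $\Phi(X)=\tr X$. Since the trace map is completely positive, applying it entrywise to the positive semidefinite block matrix $\left[\begin{smallmatrix}A & B\\ B^* & C\end{smallmatrix}\right]$ produces the positive semidefinite scalar matrix $\left[\begin{smallmatrix}\tr A & \tr B\\ \overline{\tr B} & \tr C\end{smallmatrix}\right]$ (note $\tr B^* = \overline{\tr B}$). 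Positivity of a $2\times 2$ scalar matrix gives immediately $\tr A\,\tr C \ge |\tr B|^2 = \tr B^*\,\tr B$, which already handles the left-hand sides. The work is in controlling $\tr AC$ and $\tr B^*B$.

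For the second (additive) inequality, the key observation is that $\tr AC + \tr B^*B = \tr\!\left(\left[\begin{smallmatrix}A & B\\ B^* & C\end{smallmatrix}\right]\left[\begin{smallmatrix}C & B^*\\ B & A\end{smallmatrix}\right]\right)$ up to bookkeeping, or more cleanly, that $\tr(AC)+\tr(B^*B) = \langle M, M^{\tau}\rangle$-type expression where $M=\left[\begin{smallmatrix}A & B\\ B^* & C\end{smallmatrix}\right]$; alternatively one writes it as the trace of a product of two positive semidefinite matrices obtained from $M$ and its partial transpose. The cleanest route I would take is: since $M\ge 0$, both $M$ and the ``flip'' $M' := \left[\begin{smallmatrix}C & B^*\\ B & A\end{smallmatrix}\right]$ are positive semidefinite (the flip is a unitary conjugation by the swap), so $\tr(MM')\ge 0$, and expanding $MM'$ and taking the trace yields $\tr AC + \tr B^*B + \tr B^*B + \tr CA = 2(\tr AC + \tr B^*B)\ge 0$ — which is too weak. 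So instead I would use that $\tr A\,\tr C + \tr B^*\,\tr B - \tr AC - \tr B^*B = \big(\tr A\,\tr C - \tr AC\big) + \big(|\tr B|^2 - \tr B^*B\big)$, bound the first bracket via $\tr A\,\tr C \ge \tr AC$ (valid for $A,C\ge 0$, e.g. from $\tr AC \le \tr A\,\|C\| \le \tr A\,\tr C$), and observe the second bracket is $\le 0$ in general — so this naive splitting fails too, and one genuinely needs the joint positivity of $M$.

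The correct mechanism, and the main obstacle, is finding the right quadratic-form certificate. I expect the proof to go through the identity
\[
\tr A\,\tr C + \tr B^*\,\tr B - \tr AC - \tr B^*B
= \tr\big((\tr C)\,A - CA\big) + \tr\big((\tr B)\,B^* - B^*B\big) + \text{(symmetrization)},
\]
together with the fact that for $M=\left[\begin{smallmatrix}A & B\\ B^* & C\end{smallmatrix}\right]\ge 0$ the block $\left[\begin{smallmatrix}(\tr C)I - C & (\tr B)I - B\\ \ast & (\tr A)I - A\end{smallmatrix}\right]$ is, after applying the completely copositive map $\Psi(X)=(\tr X)I - X$ appropriately, again positive semidefinite; then $\tr\big(M \cdot \Psi\text{-transform of }M\big)\ge 0$ gives exactly the claim. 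For the first (subtractive) inequality one instead compares $M$ with the partial transpose / uses the map $\Psi(X)=(\tr X)I-X$ on one factor and the identity map on the other, extracting $|\tr AC - \tr B^*B|$ by applying the bound to both $M$ and a phase-rotated variant $\left[\begin{smallmatrix}A & e^{i\theta}B\\ e^{-i\theta}B^* & C\end{smallmatrix}\right]$, which is still positive semidefinite, and choosing $\theta$ to align the phase of $\tr AC - \tr B^*B$. The hard part is packaging these products-of-two-PSD-matrices-have-nonnegative-trace arguments so that the cross terms cancel to leave precisely $\pm(\tr AC - \tr B^*B)$; I would model this on the Kittaneh--Lin original argument and on the $2$-copositivity discussion preceding Theorem~\ref{thmando}, and I anticipate that the cleanest write-up uses the elementary fact $\tr(PQ)\ge 0$ for $P,Q\ge 0$ applied to $P=M$ and $Q$ a suitable $\pm$-combination built from $I\otimes(\tr_1 M^{\tau})$, $(\tr_2 M^{\tau})\otimes I$, and $M^{\tau}$ as in Lemma~\ref{thmchoi}.
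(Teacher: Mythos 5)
First, note that the paper does not prove Theorem \ref{thm51} at all: it is quoted from Kittaneh--Lin \cite{KL17} and Lin \cite{Lin14} and then used as a black box in Section \ref{sec5}. So your proposal has to be judged on its own. Its general direction is sound: pairing the positive semidefinite matrix $M=\left[\begin{smallmatrix}A & B\\ B^* & C\end{smallmatrix}\right]$ against positive semidefinite matrices manufactured by the maps $X\mapsto(\tr X)I\pm X$ (equivalently, Lemma \ref{thmchoi} with $m=2$) and using $\tr(PQ)\ge 0$ for $P,Q\ge 0$ is exactly the mechanism that works. For the additive inequality your sketch is essentially complete in spirit: the matrix
\[
N=\begin{bmatrix}(\tr C)I-C & (\tr B)I-B\\ (\tr B^*)I-B^* & (\tr A)I-A\end{bmatrix}
\]
is positive semidefinite --- apply the ``$-$'' case of Lemma \ref{thmchoi} to the swapped matrix $\left[\begin{smallmatrix}C & B^*\\ B & A\end{smallmatrix}\right]$, which is unitarily congruent to $M$ --- and expanding $\tr(MN)\ge 0$ gives precisely $2\bigl(\tr A\,\tr C+\tr B^*\,\tr B-\tr AC-\tr B^*B\bigr)\ge 0$. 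But you never carry out this expansion (you call it ``the hard part''), you do not note that the swap of $A$ and $C$ is essential (pairing $M$ with the unswapped transform of $M$ yields $(\tr A)^2-\tr A^2+\cdots$, not the mixed terms), and your displayed ``identity'' with the unexplained ``(symmetrization)'' term is a placeholder rather than a proof step. (Minor: your computation $\tr(MM')=2(\tr AC+\tr B^*B)$ is wrong --- it equals $2\tr AC+2\Re\tr(B^2)$ --- though you discard that route anyway.)

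The genuine gap is the first (absolute-value) inequality. The phase-rotation device you propose cannot produce it: every quantity in Theorem \ref{thm51} is invariant under $B\mapsto e^{i\theta}B$, and $\tr AC-\tr B^*B$ is already real (both $\tr AC$ and $\tr B^*B$ are nonnegative reals), so there is no phase to align and you would only re-derive the same inequality. What is actually needed are two further pairings. Conjugate $N$ by $D=\mathrm{diag}(I,-I)$, which preserves positive semidefiniteness and flips the sign of the off-diagonal blocks; then $\tr(M\,DND)\ge 0$ expands to $\tr A\,\tr C-\tr B^*\,\tr B\ge \tr AC-\tr B^*B$. Next repeat the whole construction with the ``$+$'' case of Lemma \ref{thmchoi} (the $2$-copositivity of $X\mapsto(\tr X)I+X$), again conjugated by $D$; this gives $\tr A\,\tr C+\tr AC\ge \tr B^*\,\tr B+\tr B^*B$, i.e.\ the reverse bound $\tr A\,\tr C-\tr B^*\,\tr B\ge \tr B^*B-\tr AC$. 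The two one-sided bounds together yield the stated modulus inequality. So your plan is salvageable and close in spirit to the known copositive-map proofs, but as written it names the tools without executing the decisive computations, and the one concrete device offered for the absolute value fails.
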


In this section, we  present some  
inequalities related to trace for $2\times 2$ block matrices, 
which are slight extensions of the result of Kittaneh and Lin. 
We now need to introduce some  notations. 
Let $\otimes^r A:=A\otimes \cdots \otimes A$ be the $r$-fold tensor power of $A$.

\begin{theorem} \label{thm52}
Let $\begin{bmatrix}A & B \\ B^* & C \end{bmatrix}\in \mathbb{M}_{2}(\mathbb{M}_{k})$ 
be positive semidefinite. Then  for $r\in \mathbb{N}^*$, 
\[  (\mathrm{tr}A \,\mathrm{tr} C)^r - (\tr B^* \,\mathrm{tr} B)^{r} 
\ge \bigl| (\mathrm{tr} AC)^r - (\mathrm{tr} B^*B)^r\bigr| , \]
and 
\[   (\mathrm{tr}A \,\mathrm{tr} C)^r + (\tr B^* \,\mathrm{tr} B)^{r} 
\ge  (\mathrm{tr} AC)^r + (\mathrm{tr} B^*B)^r. \] 
\end{theorem}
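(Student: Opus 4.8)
The plan is to reduce Theorem \ref{thm52} to Theorem \ref{thm51} via a tensor-power trick. The key observation is that if $M = \begin{bmatrix} A & B \\ B^* & C \end{bmatrix} \in \mathbb{M}_2(\mathbb{M}_k)$ is positive semidefinite, then so is the $r$-fold tensor power $\otimes^r M \in \mathbb{M}_{2^r}(\mathbb{M}_{k^r})$, since the tensor product of positive semidefinite matrices is positive semidefinite. The idea is to locate inside $\otimes^r M$ a suitable positive semidefinite $2\times 2$ block matrix whose blocks are $\otimes^r A$, $\otimes^r B$, $\otimes^r B^*$ and $\otimes^r C$, and then apply Theorem \ref{thm51} to \emph{that} block matrix.

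Concretely, I would first record the multiplicativity of trace under tensor powers, $\tr(\otimes^r X) = (\tr X)^r$, and of ordinary matrix product, $(\otimes^r X)(\otimes^r Y) = \otimes^r(XY)$, so that $\tr\bigl((\otimes^r A)(\otimes^r C)\bigr) = (\tr AC)^r$ and similarly $\tr\bigl((\otimes^r B^*)(\otimes^r B)\bigr) = (\tr B^*B)^r$, while $\tr(\otimes^r A)\,\tr(\otimes^r C) = (\tr A\,\tr C)^r$ and $\tr(\otimes^r B^*)\,\tr(\otimes^r B) = (\tr B^*\,\tr B)^r$. Thus, once we know that $\begin{bmatrix} \otimes^r A & \otimes^r B \\ \otimes^r B^* & \otimes^r C \end{bmatrix}$ is positive semidefinite, Theorem \ref{thm51} applied to it yields exactly the two claimed inequalities. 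So the entire content is to verify this positivity.

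The main obstacle is precisely this positivity claim: $\otimes^r M$ is a $2^r \times 2^r$ block matrix, and the block with indices $(\underbrace{1,\dots,1}_r)$ and $(\underbrace{1,\dots,1}_r)$ equals $\otimes^r A$, the block indexed by $(2,\dots,2)$ and $(2,\dots,2)$ equals $\otimes^r C$, and the blocks indexed by $(1,\dots,1),(2,\dots,2)$ and $(2,\dots,2),(1,\dots,1)$ equal $\otimes^r B$ and $\otimes^r B^*$ respectively. A principal submatrix of a positive semidefinite matrix is positive semidefinite; hence the compression of $\otimes^r M$ to these two block-rows and block-columns, namely $\begin{bmatrix} \otimes^r A & \otimes^r B \\ \otimes^r B^* & \otimes^r C \end{bmatrix}$, is positive semidefinite. (For $r=1$ this is just $M$ itself.) I would write out this indexing carefully for, say, $r=2$ to make the pattern transparent, then state the general case.

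With the positivity in hand, the conclusion is immediate: write $\widetilde A = \otimes^r A$, $\widetilde B = \otimes^r B$, $\widetilde C = \otimes^r C$, apply Theorem \ref{thm51} to $\begin{bmatrix} \widetilde A & \widetilde B \\ \widetilde B^* & \widetilde C \end{bmatrix}$ (noting $(\otimes^r B)^* = \otimes^r B^*$), and translate back through the trace identities above to obtain
\[
(\tr A\,\tr C)^r - (\tr B^*\,\tr B)^r \ge \bigl|(\tr AC)^r - (\tr B^*B)^r\bigr|
\]
and the companion inequality with plus signs. I expect no difficulty beyond bookkeeping once the submatrix positivity is pinned down; the only point deserving care is the correct identification of which blocks of $\otimes^r M$ survive the compression, and the elementary identities $\tr(\otimes^r X)=(\tr X)^r$ and $(\otimes^r X)(\otimes^r Y)=\otimes^r(XY)$.
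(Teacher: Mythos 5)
Your proposal is correct and follows essentially the same route as the paper: observe that $\begin{bmatrix} \otimes^r A & \otimes^r B \\ \otimes^r B^* & \otimes^r C \end{bmatrix}$ is a principal submatrix of $\otimes^r \begin{bmatrix} A & B \\ B^* & C \end{bmatrix}$, hence positive semidefinite, then apply Theorem \ref{thm51} together with $\tr(\otimes^r X)=(\tr X)^r$ and $(\otimes^r X)(\otimes^r Y)=\otimes^r (XY)$. Your extra care with the multi-index bookkeeping is a fine addition but not a different argument.
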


\begin{proof}
Note that $\begin{bmatrix}\!\!\! \otimes^r A & \otimes^r B \\ \otimes^r B^* & \otimes^rC \end{bmatrix}$ 
is a principal submatrix of $\otimes^r \begin{bmatrix}A & B \\ B^* & C \end{bmatrix}$. 
Thus 
\[ \begin{bmatrix}\!\!\! \otimes^r A & \otimes^r B \\ \otimes^r B^* & \otimes^rC \end{bmatrix} \]
is again positive semidefinite. 
By applying Theorem \ref{thm51} to this block matrix, we get 
\[ 
\bigl| \tr (\otimes^r A )(\otimes^r C )-\tr (\otimes^r B^*)( \otimes^r B )\bigr| 
\le \tr \otimes^{r}\!\!A \,\tr \otimes^r\!\!C -\tr \otimes^r \!B^*\tr \otimes^r\!\! B, 
\]
and 
\[  
 \tr (\otimes^r A)( \otimes^r C )+\tr (\otimes^r B^*)( \otimes^r B )
\le \tr \otimes^{r}\!\!A \,\tr \otimes^r\!\!C  + \tr \otimes^r \!B^*\tr \otimes^r\!\! B.
\]
Invoking the well-known 
facts \cite[Chapter 2]{Zhan13}: $(\otimes^r X)( \otimes^rY) =\otimes^r(XY)$ and $\tr (\otimes^rX)=(\tr \,X)^r$, 
the desired inequalities  follow immediately. 
\end{proof}

\noindent 
 {\bf Remark.} Theorem \ref{thm52} was proved in the first version of our manuscript (announced on March 10, 2020, arXiv: 
 \href{https://arxiv.org/abs/2003.04520v1}{2003.04520v1}). We remark  that this result was recently and independently rediscovered  by Fu and Gumus in \cite{FG2021} using a quite different method.

\medskip 
Let $e_t(X)$ denote the $t$-th elementary symmetric function of 
the eigenvalues of the square matrix $X$.  
\[ e_t(X):=\sum_{1\le i_1< i_2 < \cdots < i_t\le n} 
\prod_{j=1}^t \lambda_{i_j}(X). \]  
In particular, we know that $e_1(X)=\tr(X)$. 
We can get the following theorem. 

\begin{theorem} \label{thm53}
Let $\begin{bmatrix}A & B \\ B^* & C \end{bmatrix}\in \mathbb{M}_{2}(\mathbb{M}_{k})$ 
be positive semidefinite. Then  for $t\in \{1,2,\ldots ,k\}$,  
\[  e_t(A)e_t(C) - e_t(B^*)e_t(B)\ge 
|e_t(AC) -e_t(B^*B)|, \]
and 
\[  e_t(A)e_t(C) + e_t(B^*)e_t(B) 
\ge e_t(AC) + e_t(B^*B). \] 
\end{theorem}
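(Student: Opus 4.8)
The plan is to mimic the tensor-power trick used for Theorem \ref{thm52}, but replacing the $r$-fold tensor power by the $t$-th \emph{antisymmetric tensor power} (the $t$-th exterior power, also called the $t$-th compound operator). Recall that for $X\in\mathbb{M}_k$ the compound matrix $C_t(X)$ acts on $\wedge^t\mathbb{C}^k$, and it enjoys the two properties we need: $C_t(XY)=C_t(X)C_t(Y)$, and $\tr C_t(X)=e_t(X)$ (the trace of the $t$-th compound is the $t$-th elementary symmetric function of the eigenvalues). Moreover $C_t$ is a $*$-homomorphism on products in the sense that $C_t(X)^*=C_t(X^*)$, and it maps positive semidefinite matrices to positive semidefinite matrices.

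First I would observe that $C_t$ can be realized as compression of the $t$-fold tensor power $\otimes^t$ to the antisymmetric subspace $\wedge^t\mathbb{C}^k\subseteq(\mathbb{C}^k)^{\otimes t}$. Consequently, applying $C_t$ blockwise to the $2\times 2$ block matrix $\begin{bmatrix}A & B\\ B^* & C\end{bmatrix}$ yields
\[
\begin{bmatrix} C_t(A) & C_t(B)\\ C_t(B^*) & C_t(C)\end{bmatrix},
\]
and this matrix is a principal submatrix (the compression to $\wedge^t\mathbb{C}^k\oplus\wedge^t\mathbb{C}^k$) of $\otimes^t\begin{bmatrix}A & B\\ B^* & C\end{bmatrix}$, hence positive semidefinite. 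Here one uses that the block structure is preserved because the antisymmetrizing projection acts on the ``inner'' $k$-dimensional factors and commutes with the $2\times 2$ outer block indexing; this is the one step that needs a careful word of justification rather than a routine citation.

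Next I would apply Theorem \ref{thm51} to the positive semidefinite $2\times 2$ block matrix $\begin{bmatrix} C_t(A) & C_t(B)\\ C_t(B^*) & C_t(C)\end{bmatrix}$, obtaining
\[
\tr C_t(A)\,\tr C_t(C) - \tr C_t(B^*)\,\tr C_t(B) \ge \bigl|\tr C_t(A)C_t(C) - \tr C_t(B^*)C_t(B)\bigr|,
\]
together with the companion $+$ inequality. Then I would substitute the multiplicativity $C_t(A)C_t(C)=C_t(AC)$ and $C_t(B^*)C_t(B)=C_t(B^*B)$, followed by the trace identity $\tr C_t(X)=e_t(X)$, which converts both inequalities verbatim into the claimed statements. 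The case $t=1$ recovers Theorem \ref{thm51} itself, and this is a useful sanity check.

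The main obstacle I anticipate is purely the bookkeeping in the first structural step: verifying that passing to the $t$-th compound commutes with the $2\times 2$ block partition, i.e.\ that $\begin{bmatrix}C_t(A)&C_t(B)\\ C_t(B^*)&C_t(C)\end{bmatrix}$ is genuinely a compression of $\otimes^t\begin{bmatrix}A&B\\B^*&C\end{bmatrix}$ to a coordinate subspace. Once this is set up correctly, inheritance of positive semidefiniteness is immediate (a compression of a positive semidefinite matrix to a coordinate subspace is positive semidefinite), and everything else is a direct invocation of Theorem \ref{thm51} plus the standard algebraic identities for compound matrices found in \cite[Chapter 2]{Zhan13}. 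No new inequality is needed beyond Theorem \ref{thm51}.
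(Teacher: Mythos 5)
Your proposal is correct and follows essentially the same route as the paper: the paper also passes to the $t$-fold tensor power blockwise, restricts to the antisymmetric (exterior) tensor class so that $\begin{bmatrix} \wedge^t A & \wedge^t B \\ \wedge^t B^* & \wedge^t C \end{bmatrix}$ remains positive semidefinite, and then applies Theorem \ref{thm51} together with $e_t(X)=\tr(\wedge^t X)$ and $(\wedge^t X)(\wedge^t Y)=\wedge^t(XY)$. Your extra care about the compression commuting with the $2\times 2$ block structure is a sound justification of the step the paper cites to Bhatia's discussion of symmetry classes of tensors.
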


\begin{proof} 
The first inequality can be found in \cite[Corollary 2.7]{KL17}. 
We next give the outline of  the proof 
of the second one. 
Note that $\begin{bmatrix} \!\!\! \otimes^t A & \otimes^t B \\ \otimes^t B^* & \otimes^t C \end{bmatrix}$ 
is  positive semidefinite.  
By restricting this block matrix to 
the symmetric class of tensor product (see, e.g., \cite[pp. 16--20]{Bha97}), we know that 
\[ \begin{bmatrix} \!\!\! \wedge^t A & \wedge^t B \\ 
\wedge^t B^* & \wedge^t C \end{bmatrix}  \]
is still positive semidefinite.  Note that $e_t(X)=\tr (\wedge^t X)$ and 
$(\wedge^t X) (\wedge^t Y) = \wedge^t  (XY)$.  
Applying Theorem \ref{thm51}
 to this block matrix yields the required result. 
\end{proof}

Let $s_t(X)$ be the $t$-th complete symmetric polynomial 
of  eigenvalues of $X$, i.e.,
\[ s_t(X):=\sum_{1\le i_1\le i_2\le \cdots \le i_t\le n} 
\prod_{j=1}^t \lambda_{i_j}(X). \]  
 Clearly, we have $s_1(X)=\tr (X)$. 
 We can get the following slight extension similarly. 

\begin{theorem} \label{thm54}
Let $\begin{bmatrix}A & B \\ B^* & C \end{bmatrix}\in \mathbb{M}_{2}(\mathbb{M}_{k})$ 
be positive semidefinite.  Then  for $t\in \{1,2,\ldots ,k\}$,  
\[  s_t(A)s_t(C) - s_t(B^*)s_t(B)\ge 
|s_t(AC) -s_t(B^*B)|, \]
and 
\[  s_t(A)s_t(C) + s_t(B^*)s_t(B) 
\ge s_t(AC) + s_t(B^*B). \] 
\end{theorem}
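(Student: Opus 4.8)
\textbf{Proof proposal for Theorem \ref{thm54}.}
The plan is to mimic the argument for Theorem \ref{thm53}, replacing the antisymmetric tensor power $\wedge^t$ by the symmetric tensor power. First I would recall that the $t$-th complete symmetric polynomial of the eigenvalues of a square matrix $X$ of order $k$ equals the trace of the induced operator on the symmetric class: $s_t(X)=\tr(\vee^t X)$, where $\vee^t X$ denotes the restriction of $\otimes^t X$ to the symmetric subspace of $(\mathbb{C}^k)^{\otimes t}$; see, e.g., \cite[pp. 16--20]{Bha97}. Likewise the multiplicativity $(\vee^t X)(\vee^t Y)=\vee^t(XY)$ holds, since the symmetric subspace is invariant under $\otimes^t(XY)=(\otimes^t X)(\otimes^t Y)$ and the restriction is a representation of the relevant semigroup.

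Next I would observe that $\otimes^t\!\begin{bmatrix}A & B \\ B^* & C \end{bmatrix}$ is positive semidefinite, and the block matrix
\[ \begin{bmatrix} \otimes^t A & \otimes^t B \\ \otimes^t B^* & \otimes^t C \end{bmatrix} \]
is a principal submatrix of it (the same reordering of tensor factors as in the proof of Theorem \ref{thm52}), hence is positive semidefinite as well. Restricting each block to the symmetric subspace is the compression of this $2\times 2$ block matrix by the orthogonal projection $I_2\otimes P$, where $P$ projects $(\mathbb{C}^k)^{\otimes t}$ onto its symmetric part; a compression of a positive semidefinite matrix is positive semidefinite, so
\[ \begin{bmatrix} \vee^t A & \vee^t B \\ \vee^t B^* & \vee^t C \end{bmatrix} \]
is positive semidefinite. (Here one uses that $\vee^t(B^*)=(\vee^t B)^*$, which is immediate since $P$ is an orthogonal projection and $\otimes^t(B^*)=(\otimes^t B)^*$.)

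Finally I would apply Theorem \ref{thm51} to this positive semidefinite $2\times 2$ block matrix, obtaining
\[ \tr(\vee^t A)\,\tr(\vee^t C) - \tr(\vee^t B^*)\,\tr(\vee^t B) \ge \bigl| \tr(\vee^t A)(\vee^t C) - \tr(\vee^t B^*)(\vee^t B) \bigr| \]
and the corresponding $+$ inequality. Using $s_t(X)=\tr(\vee^t X)$ and $(\vee^t X)(\vee^t Y)=\vee^t(XY)$ to rewrite $\tr(\vee^t A)(\vee^t C)=\tr\vee^t(AC)=s_t(AC)$ and $\tr(\vee^t B^*)(\vee^t B)=\tr\vee^t(B^*B)=s_t(B^*B)$, the two claimed inequalities follow. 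The only point requiring a little care — and the main obstacle — is the clean identification of $s_t(X)$ with $\tr(\vee^t X)$ together with the multiplicativity on the symmetric class; once these standard facts about induced symmetric operators are in place, the rest is just the compression argument plus Theorem \ref{thm51}, exactly parallel to the antisymmetric case.
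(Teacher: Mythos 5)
Your proposal is correct and follows essentially the same route as the paper: restrict the positive semidefinite block matrix $\begin{bmatrix} \otimes^t A & \otimes^t B \\ \otimes^t B^* & \otimes^t C \end{bmatrix}$ to the symmetric class, use $s_t(X)=\tr(\vee^t X)$ and $(\vee^t X)(\vee^t Y)=\vee^t(XY)$, and apply Theorem \ref{thm51}. Your explicit compression argument via $I_2\otimes P$ is a nice justification of the positivity step that the paper leaves implicit.
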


\begin{proof}
Note that $\begin{bmatrix} \!\!\! \otimes^t A & \otimes^t B \\ \otimes^t B^* & \otimes^t C \end{bmatrix}$ 
is  positive semidefinite.  
By restricting this block matrix to 
the symmetric class of tensor product (see, e.g., \cite[pp. 16--20]{Bha97}), we know that 
\[ \begin{bmatrix} \!\!\! \vee^t A & \vee^t B \\ \vee^t B^* & \vee^t C \end{bmatrix}  \]
is still positive semidefinite. Similarly, we know that  $\tr (\vee^tX)=s_t(X)$ and 
 $(\vee^t X)( \vee^tY) =\vee^t(XY)$. 
Applying Theorem \ref{thm51} to this block matrix leads to 
the desired result.  
\end{proof}

\section*{Acknowledgments}
This paper is dedicated to Prof. Weijun Liu (Central South University) on his 60th birthday, 
October 22 of the lunar calendar in 2021. 
I would like to thank Prof. Yuejian Peng 
 for reading carefully through an earlier version 
of this paper. 
This work was supported by  NSFC (Grant No. 11931002).


\begin{thebibliography}{30} 

\bibitem{Aud07}
K.M.R. Audenaert, Subadditivity of $q$-entropies for $q>1$, 
J. Math. Phys. 48 (2007), no. 8, 083507. 

\bibitem{Ando14}
T. Ando, Matrix inequalities involving partial traces, 
ILAS Conference, 2014. 

  \bibitem{Bha97} 
R. Bhatia,  Matrix Analysis, 
GTM 169, Springer-Verlag, New York, 1997. 

\bibitem{Bh07}
R. Bhatia, 
Positive Definite Matrices, 
Princeton University Press, Princeton, 2007. 

\bibitem{Bes13}
A. Desenyei, D. Petz, 
Partial subadditivity of entropies, 
Linear Algebra Appl. 439 (2013) 3297--3305.

\bibitem{Choi17}
D. Choi, 
Inequalities related to trace and determinant of positive semidefinite block matrices, 
Linear Algebra Appl. 532 (2017) 1--7. 

 \bibitem{Choi18}
D. Choi, Inequalities about partial transpose and partial traces, 
Linear Multilinear Algebra 66 (2018) 1619--1625. 

\bibitem{Choi19}
D. Choi, T.-Y. Tam, P. Zhang, Extension of Fischer's inequality, 
Linear Algebra Appl. 569 (2019) 311--322.



\bibitem{FM94} 
M. Fiedler, T.L. Markham,  On a theorem of Everitt, Thompson and de Pillis, 
Math. Slovaca  44 (1994) 441--444. 


\bibitem{FLTmia}
X. Fu,  P.-S. Lau, T.-Y. Tam,  
Linear maps of positive partial transpose matrices and 
singular value inequalities, 
Math. Inequal. Appl. 23 (4) (2020) 1459--1468. 


\bibitem{FLT20}
X. Fu,  P.-S. Lau, T.-Y. Tam, 
Inequalities on partial traces of positive 
semidefinite block matrices, 
Canad. Math. Bull. 64 (4) (2021) 964--969.  

\bibitem{FG2021}
X. Fu, M. Gumus, 
Trace inequalities involving positive semi-definite block matrices, 
Linear Multilinear Algebra  (2021) \url{https://doi.org/10.1080/03081087.2021.1942418}. 

\bibitem{Gumus18}
M. Gumus, J. Liu, S. Raouafi, T.-Y. Tam, 
Positive semi-definite $2\times 2$ block matrices and norm inequalities, 
Linear Algebra Appl. 551 (2018) 83--91. 


\bibitem{HJ13} 
R.A. Horn, C.R. Johnson,  Matrix Analysis, 
2nd ed., Cambridge University Press, Cambridge, 2013. 

\bibitem{JR10}
A. Jen\v{c}ov\'{a}, M.B. Ruskai, 
A unified treatment of convexity of relative entropy and 
related trace functions, with conditions for equality, 
 Rev. Math. Phys. 22 (2010) 1099--1121. 


\bibitem{Jiang19}
X. Jiang, Y. Zheng, X. Chen, 
Extending a refinement of Kotelianskii's inequality, 
Linear Algebra Appl. 574 (2019) 252--261. 


\bibitem{Kua17} 
L. Kuai, An extension of the Fiedler--Markham determinant inequality, 
Linear Multilinear Algebra 66 (2018) 547--553. 

\bibitem{KL17}
F. Kittaneh, M. Lin, 
Trace inequalities for positive semidefinite block matrices, 
Linear Algebra Appl. 524 (2017) 153--158.



\bibitem{Lee2015}
E.-Y. Lee, 
The off-diagonal block of a PPT matrix, 
Linear Algebra Appl. 486 (2015), 449--453. 

\bibitem{Li20}
Y. Li, L. Feng, Z. Huang, W. Liu, 
Inequalities regarding partial trace and partial determinant, 
Math. Inequal. Appl. 23 (2020) 477--485. 

\bibitem{Li20laa}
Y. Li, Y. Huang, L. Feng,  W. Liu, 
Some applications of two completely copositive maps, 
Linear Algebra Appl. 590 (2020) 124--132.  



\bibitem{HuangLi20}
Y. Li, W. Liu, Y. Huang,  
A new matrix inequality involving partial traces, 
Operators and Matrices 15 (2021), no. 3, 1189--1199. 

\bibitem{LF2021}
Y. Li, L. Feng, 
An Oppenheim type determinantal inequality for the Khatri--Rao product, 
 Operators and Matrices 15 (2021), no. 2, 693–701.

\bibitem{LP2021}
Y. Li, Y. Peng, 
An Oppenheim type inequality for positive definite block 
matrices, Linear Multilinear Algebra (2021) \url{https://doi.org/10.1080/03081087.2021.1882370}. 

\bibitem{Lin14b}
M. Lin, 
An Oppenheim type inequality for a block Hadamard product, 
Linear Algebra Appl. 452 (2014) 1--6. 

\bibitem{Lin14}
M. Lin, A completely PPT map, Linear Algebra Appl. 459 (2014) 404--410.  

\bibitem{Linoam2015}
M. Lin, Inequalities related to $2\times 2$ block PPT matrices, 
 Operators and Matrices 9 (2015), no.4,  917--924.

\bibitem{Lin15}
M. Lin, Extension of a result of Haynsworth and Hartfiel, 
Arch. Math. 104 (2015) 93--100. 

\bibitem{Lin16} 
M. Lin, A treatment of a determinant inequality of Fiedler and Markham, 
Czech. Math. J. 66 (2016) 737--742.  

\bibitem{Lin16b}
M. Lin, A determinantal inequality involving partial traces, 
Canad. Math. Bull. 59 (2016) 585--591. 

\bibitem{LinZhang}
M. Lin, P. Zhang, Unifying a result of Thompson and a result of  Fiedler and Markham 
on block positive definite matrices, Linear Algebra Appl. 533 (2017) 380--385.


\bibitem{Petz08}
D. Petz, 
Quantum Information Theory and Quantum Statistics. Theoretical and Mathematical Physics, 
Springer, Berlin, 2008.

\bibitem{Ras2012} 
A.E. Rastegin,  
Relations for symmetric norms and anti-norms before 
and after partial trace, J. Stat. Phys. 148 (2012) 1040--1053.


\bibitem{YLC19}
J. Yang, L. Lu, Z. Chen, 
Schatten $q$-norms and determinantal inequalities for matrices with numerical ranges in a sector,  
Linear Multilinear Algebra 67 (2019) 221--227. 


\bibitem{Zhan13}
X. Zhan, Matrix Theory, Graduate Studies in Mathematics, vol. 147, 
Amer. Math. Soc., Providence, RI, 2013. 


\bibitem{Zhang11}
F. Zhang, 
Matrix Theory: Basic Results and Techniques, 2nd ed., 
Springer, New York, 2011.


 \bibitem{Zha12}
F. Zhang, Positivity of matrices with generalized matrix functions. 
Acta Math. Sin. (Engl. Ser.) 28 (2012)   1779--1786. 

\bibitem{Zhang15}
P. Zhang, Extension of Matic's results, 
Linear Algebra Appl. 486 (2015) 328--334. 

\bibitem{Zhang19}
P. Zhang, 
On some inequalities related to positive block matrices, 
Linear Algebra Appl.  576 (2019) 258--267. 
 

\end{thebibliography}
\end{document}